\newtheorem{theorem}{Theorem}[section]
\newtheorem{lemma}{Lemma}[section]
\newtheorem{proposition}{Proposition}[section]
\newtheorem{corollary}{Corollary}[section]
\newtheorem{definition}{Definition}[section]
\newtheorem{conjecture}{Conjecture}[section]
\theoremstyle{definition}
\newtheorem{remark}{Remark}[section]
\newcommand{\Ad}{\textup{\textrm{Ad}}\xspace}
\newcommand{\ad}{\textup{\textrm{ad}}\xspace}
\newcommand{\Lie}{\textup{\textrm{Lie}}\xspace}
\newcommand{\rspan}{\textup{$\mathbb{R}$\textrm{-span}}\xspace}
\newcommand{\zspan}{\textup{$\mathbb{Z}$\textrm{-span}}\xspace}
\newcommand{\rank}{\textup{\textrm{rank}}\xspace}
\newcommand{\Vol}{\textup{\textrm{Vol}}\xspace}
\newcommand{\SL}{\textup{\textrm{SL}}\xspace}
\newcommand{\GL}{\textup{\textrm{GL}}\xspace}
\newcommand{\PN}{\textup{\textrm{$\mathcal{KN}$}}\xspace}
\newcommand{\CPN}{\textup{\textrm{$\mathcal{KN}$}}\xspace}
\newcommand{\hull}{\textup{\textrm{Hull}}\xspace}
\title{Non-Divergence of Unipotent Flows on Quotients of Rank One Semisimple Groups}
\author{C. Davis Buenger and Cheng Zheng }
\begin{document}

\allowdisplaybreaks
\maketitle

\begin{abstract}
Let $G$ be a semisimple Lie group of rank $1$ and $\Gamma$ be a torsion free discrete subgroup of $G$.   
We show that in $G/\Gamma$, given $\epsilon>0$, any trajectory of a unipotent flow remains in the set of points with injectivity radius  larger than $
\delta$ for
$1-\epsilon$ proportion of the  time for some $\delta>0$. The result also holds for any finitely generated 
discrete
subgroup $\Gamma$ and this generalizes Dani's quantitative nondivergence theorem \cite{D} for lattices of
rank one semisimple groups. Furthermore, for a fixed $\epsilon>0$ there exists an injectivity radius $\delta$ such that for any unipotent trajectory $\{u_tx\}_{t\in [0,T]}$, either it spends at least $1-\epsilon$ proportion of the time in the set  with injectivity radius larger than $\delta$ for all large $T>0$ or there exists a $\{u_t\}_{t\in\mathbb{R}}$-normalized abelian subgroup $L$ of $G$ which intersects $g\Gamma g^{-1}$ in a small covolume lattice.  
We also extend these results when $G$ is the product of rank-$1$ semisimple
groups and $\Gamma$ a discrete subgroup of $G$ whose projection onto each nontrivial factor is torsion free. 
\end{abstract}

\section{Background and Statements}
Let $G$ be a  Lie group, $\Gamma$ be a discrete subgroup of $G$, and $\{u_t\}_{t
\in\mathbb{R}}$ be a
one-parameter unipotent subgroup in $G$ (that is, $Ad_{u_t}$ is unipotent). In the case that $G=\SL(n,\mathbb{R})$ and $\Gamma=\SL(n,\mathbb{Z})$, Margulis \cite{M} proved that each unipotent trajectory is non-divergent. Later, Dani \cite{D79} improved Margulis's result  by showing that for each unipotent trajectory on $\SL(n,\mathbb{R})/\SL(n,\mathbb{Z})$ there exists a compact set $K$ such that the relative  time the trajectory spends in $K$ is of positive proportion. In \cite{D}, Dani further improved this result 
for any $\mathbb{R}$-rank 1 Lie group and $\Gamma$ an arbitrary lattice in $G$ and showed that for any unipotent trajectory  and any $\epsilon>0$ there exists a compact set $K(\epsilon)$ such that the relative proportion of time the trajectory spends in $K(\epsilon)$ is at least $1-\epsilon$ . Finally in \cite{D2}, Dani generalized the theorem for any Lie group $G$ and any lattice $\Gamma$ in $G$.



In \cite[page 230]{Rat} Ratner defined divergence for orbits on quotients of Lie groups by discrete groups as follows: given a Lie group $G$, a discrete group $\Gamma$, $h\in G$ and $x=g\Gamma \in G/\Gamma$, the $h$-orbit of $x$ is said to be divergent if there exists a sequence $\{\gamma_n\}\in \Gamma\setminus \{e\}$ such that $(h^ng)\gamma_n(h^ng)^{-1}\rightarrow e$ as $n\rightarrow\infty$. Note that if $\Gamma$ is a lattice, then  $h^nx$ is divergent if and only if for every compact set $K$, we have $h^nx\not\in K$ for all large $n$\cite[Chap.1]{R}.
In light of this definition, Ratner asked if the following holds:

\begin{conjecture} Given a unipotent one-parameter subgroup $\{u_t\}_{t\in\mathbb{R}}\subseteq G$ and a discrete subgroup $\Gamma$ of $G$, for any point $x=g\Gamma\in G/\Gamma$, there exists a neighborhood $N$ of the identity in $G$ such that 
\begin{equation*}
 \frac{1}{T}m(\{t\in[0,T]:N\cap(u_tg)\Gamma(u_{t}g)^{-1}\neq \{e\}\})<\epsilon  \text{ for all large $T>0$},
\end{equation*} 
where $m$ is the Lebesgue measure on $\mathbb{R}$.
\end{conjecture}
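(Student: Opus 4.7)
Let $\Gamma' = g\Gamma g^{-1}$, so the condition $N \cap (u_t g)\Gamma(u_t g)^{-1} \neq \{e\}$ becomes $u_t \Gamma' u_t^{-1} \cap N \neq \{e\}$, i.e., $u_t x$ lies in the $N$-thin part of $G/\Gamma$. The overall strategy is a Kleinbock--Margulis style quantitative nondivergence argument combined with the Margulis lemma in rank one.

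For each fixed $\gamma \in \Gamma' \setminus \{e\}$, the map $t \mapsto \Ad(u_t)(\log \gamma)$ is polynomial in $t$ of degree at most $\dime G$, since $\Ad(u_t) = \exp(t \ad X)$ for $u_t = \exp(tX)$ with $\ad X$ nilpotent. Writing the coordinates of $u_t \gamma u_t^{-1}$ in an exponential chart near $e$ therefore yields polynomials, and such polynomials are $(C,\alpha)$-good. This gives, for any single $\gamma$, an explicit bound on the Lebesgue measure of the set of $t \in [0,T]$ for which $u_t\gamma u_t^{-1} \in N$ in terms of the sup-norm of this polynomial on $[0,T]$.

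The chief obstacle is that many distinct $\gamma$ may contribute simultaneously, and naive summation fails because $\Gamma'$ can be infinite. To organize them I would invoke the Margulis lemma: there exists a universal neighborhood $\Omega$ of $e$ in $G$, depending only on $G$, such that for each $h \in G$ the subgroup generated by $h\Gamma' h^{-1} \cap \Omega$ is virtually nilpotent; since $\Gamma$ is torsion free and $G$ has rank one, this subgroup lies inside a single connected nilpotent Lie subgroup $U_h$. All short elements of $u_t \Gamma' u_t^{-1}$ are thus captured by the single group $U_t := U_{u_t}$, and quantitative nondivergence is reduced to controlling one nilpotent family at a time.

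The final dichotomy then proceeds as follows. If the family $\{U_t\}$ eventually stabilizes to a subgroup $L$ normalized by $\{u_s\}_{s \in \mathbb{R}}$ and $L \cap \Gamma'$ has small covolume in $L$, we are in the second alternative hinted at in the abstract. Otherwise, the polynomial behavior of $u_t \gamma u_t^{-1}$ for generators of $L \cap \Gamma'$ forces the short elements to escape $N$ on most of $[0,T]$, giving the desired $(1-\epsilon)$-proportion bound. The most delicate point is to make $\delta$ (equivalently, $N$) uniform in $x$: this demands a $\CPN$-style inductive covering of parameter space, peeling off one $\Ad(u_t)$-invariant nilpotent family at a time while keeping the accumulated constants under control, and this uniformity step is where I expect the bulk of the technical work to lie.
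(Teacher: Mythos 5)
Your proposal identifies the right toolkit---polynomial behavior of $\Ad(u_t)$, $(C,\alpha)$-good functions, the Zassenhaus/Margulis lemma to corral short elements into a connected nilpotent subgroup, and Kleinbock--Margulis quantitative nondivergence---and these are indeed the ingredients of the paper's argument. However, there is a genuine gap in the key step where the bound is actually produced. Kleinbock's theorem (Theorem 4.2 in the paper) requires a \emph{lower bound} $\rho$ on $\sup_{t\in B}\|h_t\Delta\|_0^{1/\rank\Delta}$ for every primitive $\Delta$, and your sketch never explains where that lower bound comes from. Saying ``the polynomial behavior forces the short elements to escape $N$ on most of $[0,T]$'' is exactly what needs proving; without a guaranteed time at which $u_t\Gamma' u_t^{-1}$ avoids a fixed ball, a priori all the relevant polynomials could stay small throughout $[0,T]$. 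The paper's mechanism is to partition $[0,T]$ into finitely many maximal intervals $I_j$ on which $G_{u_tx}\cap B_\delta\neq\{e\}$ (Lemma 3.6), prove that on each such interval there is a single $(\delta,u_t,x)$-dominant conjugate $H_j$ of $MA$ or $MN$ (Lemma 3.7, which uses the uniqueness statements of Lemma 3.2), and then exploit the fact that at the endpoint $a_j$ of $I_j$ one has $G_{u_{a_j}x}\cap B_\delta=\{e\}$ --- the maximality of the interval gives you for free a time where the short-vector norm jumps up to $\geq\delta/2$, which supplies $\rho\geq\delta/8$. Your sketch replaces this by an unclear ``stabilization'' heuristic for a family $\{U_t\}$, without any mechanism producing the endpoint.

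A secondary confusion: you frame the final step as a dichotomy (``either we are in the second alternative hinted at in the abstract, or\dots''), but the conjecture has no second alternative; it asserts the nondivergence estimate unconditionally, for every $x$, with $N$ allowed to depend on $x$. Since a fixed $x=g\Gamma$ has some positive injectivity radius $\delta_0$, one may take $N=B_{c_\epsilon\delta_0}$ and the conclusion is exactly Theorem~\ref{maintheorem1}; the dichotomy you describe belongs to Theorem~\ref{uniform}, which keeps $\delta$ independent of $x$. Finally, to actually feed the problem into Kleinbock--Margulis you need the linearization steps the paper builds carefully: passing to a finite-index subgroup inside a connected nilpotent group with index bounded by $l_M$ (Corollary 3.3), proving $2$-step nilpotency (Lemma 3.4), and taking $\Lambda_j=\zspan(\exp^{-1}\tilde F_j)$ and showing it is discrete and sits inside $\frac12\exp^{-1}\tilde F_j$ (Lemma 3.5). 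These are not cosmetic: the $(C,\alpha)$-good machinery applies to covolume functions $\|h_t\Delta\|_0$ of discrete subgroups of $\mathbb{R}^k$, not directly to individual $\gamma$'s as your second paragraph suggests.
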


We address this conjecture when  $G$ is a semisimple $\mathbb{R}$-rank 1 group and more generally when G is the product of semisimple $\mathbb{R}$-rank 1 groups. For a Lie group $G$ and discrete group $\Gamma$, let $X_\delta$ denote the set of points in $G/\Gamma$ with injectivity radius at least $\delta$ (see Section~\ref{sec:two}).

\begin{theorem}\label{maintheorem1}
Suppose $G$ is a semisimple Lie group of $\mathbb{R}$-rank 1 and $\Gamma$ is a virtually torsion-free discrete subgroup of 
 $G$. Let $X=G/\Gamma$. Then there exists a constant $c_\epsilon>0$
 such that for any one-parameter unipotent subgroup $\{u_t\}_{t\in\mathbb{R}}$  of $G$, $\epsilon>0$, 
 $\delta>0$, $x\in X_{\delta}$, and $T>0$ 
 \begin{equation}\label{thm1}
   \frac{1}{T}m(\{t\in[0,T]:u_tx\notin X_{c_\epsilon\delta}\}) \leqslant\epsilon.
 \end{equation}
\end{theorem}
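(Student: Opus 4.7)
The plan is to reframe the injectivity-radius condition as a non-divergence statement for the conjugate discrete group $\Lambda := g\Gamma g^{-1}$, and then run a Kleinbock--Margulis type argument with $(C,\alpha)$-good functions, using the rank-one Margulis lemma to control which elements of $\Lambda$ can be simultaneously short. Unwinding definitions, a point $h\Gamma$ fails to lie in $X_\eta$ exactly when $(hg)\Gamma(hg)^{-1}$ meets $B_\eta(e)\setminus\{e\}$, so (\ref{thm1}) is equivalent to showing that
\[
E(T) := \{t\in[0,T]\,:\, u_t\Lambda u_{-t}\cap B_{c_\epsilon\delta}(e)\neq\{e\}\}
\]
has measure at most $\epsilon T$, given the hypothesis $\Lambda\cap B_\delta(e)=\{e\}$.

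Next I would set up the polynomial machinery. Because $G$ has $\mathbb{R}$-rank one, $\Ad(u_t)$ acts on $\mathfrak{g}$ as a polynomial in $t$ of degree bounded in terms of $G$ alone. For $\lambda\in\Lambda\setminus\{e\}$ close enough to $e$ that $\log\lambda$ is defined one has $\log(u_t\lambda u_{-t})=\Ad(u_t)\log\lambda$, so $\varphi_\lambda(t):=\|\log(u_t\lambda u_{-t})\|$ is the norm of a vector-valued polynomial of bounded degree in $t$, and is therefore $(C,\alpha)$-good on every subinterval of $[0,T]$ with $C,\alpha$ depending only on $G$. The Kleinbock--Margulis principle then gives: whenever $\sup_{t\in I}\varphi_\lambda(t)\geq\delta$, the inequality $m(\{t\in I:\varphi_\lambda(t)<\eta\delta\})\leq C\eta^\alpha|I|$ holds.

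I would then run a Dani--Margulis covering. Cover $E(T)$ by maximal open subintervals $J$ on each of which some single witness $\lambda_J\in\Lambda\setminus\{e\}$ has $\varphi_{\lambda_J}(t)<c_\epsilon\delta$. At an endpoint of $J$ the function $\varphi_{\lambda_J}$ attains the value $c_\epsilon\delta$, and if $J$ abuts $0$ then by hypothesis $\varphi_{\lambda_J}(0)=\|\log\lambda_J\|\geq\delta$ anyway. Applying the $(C,\alpha)$-good bound on a small enlargement of $J$ inside $[0,T]$ gives $|J|\leq C'(c_\epsilon)^\alpha|\tilde J|$; summing over $J$ and choosing $c_\epsilon:=(\epsilon/C'')^{1/\alpha}$ yields $|E(T)|\leq\epsilon T$.

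The main obstacle, and the place the rank-one hypothesis is essential, is that distinct maximal subintervals can be witnessed by distinct elements $\lambda_J$ whose short excursions can a priori overlap and interact. Here I would invoke the Margulis lemma: there is a universal $\mu_G>0$ such that for any discrete $\Lambda\subset G$ the subgroup generated by $\Lambda\cap B_{\mu_G}(e)$ is virtually abelian (virtually cyclic for real hyperbolic $G$). Shrinking $c_\epsilon$ so that $c_\epsilon\delta<\mu_G$, all short witnesses at any given time lie in a common virtually abelian subgroup, reducing the many-witness covering to a lattice-in-an-abelian-group problem in which the overlap combinatorics are controlled by the one-dimensional polynomial structure and the Dani--Margulis summation closes cleanly.
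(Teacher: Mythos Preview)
Your outline points in the right direction but leaves two genuine gaps, and the paper's proof closes both by a different mechanism.

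\medskip
\textbf{First gap: the structure of the short group is not abelian.} For general $\mathbb{R}$-rank~1 groups with $\mathfrak{u}^{2\alpha}\neq 0$ (e.g.\ $SU(n,1)$, $Sp(n,1)$) the horospherical subgroup $N$ is $2$-step nilpotent (Heisenberg type), not abelian. The Margulis/Zassenhaus lemma, combined with torsion-freeness, places the short elements inside a conjugate of $MA$ or of $MN$, hence inside a $2$-step nilpotent group, not a virtually abelian one. This matters because on a $2$-step group $\exp^{-1}$ is not a homomorphism, so $\exp^{-1}(\Lambda\cap H)$ is not a subgroup of $\mathfrak{g}$ and you cannot directly feed it to Kleinbock--Margulis. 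The paper handles this with a separate lemma showing that for a discrete $\Lambda$ in a $2$-step group one has $\mathbb{Z}\text{-span}(\exp^{-1}\Lambda)\subset\tfrac12\exp^{-1}\Lambda$, which salvages discreteness; your sketch does not address this step.

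\medskip
\textbf{Second gap: the overlap control.} Your enlargement $\tilde J$ must be taken large enough that $\sup_{\tilde J}\varphi_{\lambda_J}\geq\delta$ (not $c_\epsilon\delta$), otherwise the $(C,\alpha)$-good inequality only gives $|J|\leq C|\tilde J|$ with no gain. There is nothing preventing these $\tilde J$, indexed by \emph{different} witnesses $\lambda_J$, from overlapping with unbounded multiplicity, and invoking the Margulis lemma at scale $\mu_G$ does not by itself bound overlaps at scale $\delta$. The paper avoids the issue entirely: it partitions $[0,T]$ at scale $\delta$ (not $c_\epsilon\delta$) into the connected components $I_j$ of $\{t:G_{u_tx}\cap B_\delta\neq\{e\}\}$, and proves a rigidity statement (two conjugates of $MA$ or $MN$ sharing a non-elliptic element must coincide) which forces a \emph{single} $(\delta,u_t,x)$-dominant subgroup $H_j$ on all of $I_j$. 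The $I_j$ are disjoint by construction, so there is no covering combinatorics at all. On each $I_j$ one then applies the full Kleinbock--Margulis theorem to the discrete group $\Lambda_j\subset\Lie(\tilde H_j)$, checking the covolume condition for \emph{every} primitive subgroup (not just rank~1 vectors as in your $\varphi_\lambda$), using the endpoint of $I_j$ to get $\rho\geq\delta/8$.

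In short: the missing idea is the dominant-subgroup lemma that pins down a single $H_j$ on each bad interval at scale $\delta$; once you have that, the intervals are disjoint and the Kleinbock--Margulis lattice estimate (applied to all primitive subgroups of the linearized discrete group) finishes the job without any Vitali-type covering.
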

\begin{remark} 
Note that all finitely generated groups in $\GL(n,\mathbb{C})$ are virtually torsion free by a theorem of Selberg \cite{Sel}.
\end{remark}

We also address an analogue of the following uniform non-divergence result due to Dani:
\begin{theorem}[Therorem 2.1, \cite{D2}] Let $n\in\mathbb{N}$ and $\epsilon>0$ be given. Then there exists $\delta>0$ such that for any unipotent one-parameter subgroup $\{u_t\}_{t\in\mathbb{R}}$ in $\SL(n,\mathbb{R})$ and any lattice $\Lambda\in\mathbb{R}^n$ with $covolume$ $1$, the following holds: either
\begin{equation}m(\{t\in[0,T]:u_tg\Lambda\cap B_\delta=\{0\}\})>(1-\epsilon)T \text{ for all large $T>0$}
\end{equation}
or there exists a $\{u_t\}_{t\in\mathbb{R}}$-invariant proper non-zero subspace $W$ of $\mathbb{R}^n$ such that $W\cap\Lambda$ is a lattice in $W$.
\end{theorem}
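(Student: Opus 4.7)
My plan is to run the Dani--Margulis strategy: encode short vectors in $u_t\Lambda$ by small norms of exterior-power vectors attached to lattice-spanned subspaces, and then exploit that these norms are polynomials in $t$ of bounded degree and hence $(C,\alpha)$-good, so that ``small on an $\epsilon$-proportion of $[0,T]$'' upgrades to ``small on all of $[0,T]$''. Concretely, let $\mathcal{W}$ be the countable family of nonzero subspaces $W\subseteq\mathbb{R}^n$ with $W\cap\Lambda$ a lattice in $W$; for each $W\in\mathcal{W}$ of dimension $k$ pick a $\mathbb{Z}$-basis $v_1,\dots,v_k$ of $W\cap\Lambda$ and form $\omega_W:=v_1\wedge\cdots\wedge v_k\in\wedge^k\mathbb{R}^n$, so $\|\omega_W\|$ equals the covolume of $W\cap\Lambda$ in $W$. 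Set $\alpha_W(t):=\|(\wedge^k u_t)\omega_W\|$. Since $u_t$ is unipotent on $\mathbb{R}^n$, the entries of $\wedge^k u_t$ are polynomials in $t$ of degree at most $k(n-k)$, hence $\alpha_W(t)^2$ is a polynomial of degree at most $D:=2n(n-1)$. A Minkowski/Mahler estimate furnishes a function $\eta(\delta)\to 0$ as $\delta\to 0$ such that $u_t\Lambda\cap B_\delta\neq\{0\}$ forces $\alpha_W(t)<\eta(\delta)$ for some $W\in\mathcal{W}$.

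Assume the first alternative fails, so that for arbitrarily large $T$ the set $E_T:=\{t\in[0,T]:u_t\Lambda\cap B_\delta\neq\{0\}\}$ has measure $>\epsilon T$. Each $t\in E_T$ admits a witness $W\in\mathcal{W}$. A Margulis-type lemma on convex symmetric bodies in $\wedge^k\mathbb{R}^n$ caps the number of simultaneous witnesses at any fixed $t$; a pigeonhole over dimensions $k=1,\dots,n-1$ and over subspaces of uniformly bounded complexity then extracts a single $W_T\in\mathcal{W}$ with
\[ m(\{t\in[0,T]:\alpha_{W_T}(t)<\eta\})\ge c(n,\epsilon)\,T. \]
The $(C,\alpha)$-goodness of polynomials of degree at most $D$ then forces $\sup_{[0,T]}\alpha_{W_T}\leq C'(n,\epsilon)\,\eta$, where $c,C'$ depend only on $n$ and $\epsilon$.

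Next, Minkowski's theorem gives a lower bound on $\|\omega_W\|$ outside a finite subfamily of $\mathcal{W}$ of controlled covolume, so along a subsequence $T_j\to\infty$ one may assume $W_{T_j}$ is a fixed $W\in\mathcal{W}$. Then $\alpha_W$ is a polynomial on $\mathbb{R}$ that is bounded, hence constant. Using that the derivation induced by $\{u_t\}$ on $\wedge^k\mathbb{R}^n$ is nilpotent, I would verify algebraically that a constant-norm orbit of a decomposable vector under a nilpotent flow must be fixed, so $u_t\omega_W=\omega_W$ and equivalently $u_tW=W$ for every $t$. Because $\|\omega_W\|\leq C'\eta<1=\Vol(\mathbb{R}^n/\Lambda)$ for $\delta$ chosen small enough, the subspace $W$ is automatically proper and nonzero, yielding the second alternative.

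The hardest step will be the combinatorial extraction of a single witness $W_T$: the witnessing subspace a priori varies with $t$, and controlling the number of simultaneous short flag members across all dimensions requires Margulis's geometry-of-numbers lemma coupled with an inductive flag argument, as in Dani's original treatment \cite{D2}. Once that is in hand, the Mahler correspondence, bounded-degree $(C,\alpha)$-goodness, and the rigidity of unipotent orbits on decomposable wedges combine routinely to finish the proof.
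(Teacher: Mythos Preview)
The paper does not prove this statement; it is quoted verbatim from Dani~\cite{D2} as background for the authors' own uniform results. So there is no ``paper's proof'' to compare against directly. That said, the paper's proof of its own Theorem~\ref{uniform} (Section~4.4) contains exactly the argument one would use here, and it differs from your outline in one essential place.

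Your extraction of a single witness $W_T$ has a gap. You want to pass from ``for each $t\in E_T$ some $W$ has $\alpha_W(t)<\eta$'' to ``a single $W_T$ has $\alpha_{W_T}(t)<\eta$ on a set of measure $\geqslant c(n,\epsilon)T$.'' Your justification is a pointwise multiplicity bound (at each $t$ only boundedly many $W$ witness) together with a pigeonhole ``over subspaces of uniformly bounded complexity.'' But the total number of distinct witnesses appearing as $t$ ranges over $[0,T]$ is not bounded independently of $T$: for instance, already among the rank-one witnesses the number of primitive $v\in\Lambda$ with $\|u_tv\|<\delta$ for some $t\in[0,T]$ typically grows polynomially in $T$. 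Bounded pointwise multiplicity plus an unbounded index set does not yield a single $W_T$ carrying a fixed fraction of the measure.

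The route the paper actually takes (for its own analogue, and which works verbatim here) bypasses this entirely by using the contrapositive of the Kleinbock--Margulis theorem rather than a pointwise covering argument. Quoting Corollary~\ref{thm4}: if for every primitive $\Delta\in\mathcal{L}(\Lambda)$ one has $\sup_{t\in[0,T]}\|u_t\Delta\|_0^{1/\mathrm{rank}(\Delta)}\geqslant\rho$, then $m(E_T)\leqslant C_k(\delta/\rho)^{\alpha_k}T$. Choose $\rho$ so the right side is $\leqslant\epsilon T$. If the first alternative fails, then for every $T$ the hypothesis must fail: there is some $\Delta_T$ with $\sup_{[0,T]}\|u_t\Delta_T\|_0<\rho^{\mathrm{rank}(\Delta_T)}$. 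Evaluating at $t=0$ gives $\|\Delta_T\|_0<\rho^{\mathrm{rank}(\Delta_T)}$, so by Lemma~\ref{l41} all the $\Delta_T$ lie in a single \emph{finite} set independent of $T$. Now pigeonhole on that finite set (not on the pointwise witnesses) yields one $\Delta$ with $\|u_t\Delta\|_0$ bounded for all $t\geqslant 0$, and your final two steps (bounded polynomial $\Rightarrow$ constant, Proposition~\ref{p44}; then $u_t$-invariance of $W=\rspan(\Delta)$) finish the job. The advantage of this ordering is that the finiteness is obtained \emph{after} the $(C,\alpha)$-goodness has already forced the sup-bound, so the bound at $t=0$ is automatic and uniform in $T$.
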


Dani's result was later quatified by Kleinbock and Margulis~\cite{KM} and Kleinbock~\cite{K}. In light of their uniform results, we have the following dichotomy:
\begin{theorem}\label{uniform}
Let $G$ be a semisimple Lie group of $\mathbb{R}$-rank 1 and $\Gamma$ be a virtually torsion-free discrete group in $G$.  Given $\epsilon>0$ there exist positive computable constants $\delta_\epsilon$ and $\beta_\epsilon$ such that for any $\delta<\delta_\epsilon$, any one-parameter unipotent subgroup $\{u_t\}_{t\in\mathbb{R}}$ of $G$, and any $x=g\Gamma \in G/\Gamma$  either 
\begin{equation*}
\frac{1}{T}m\left(\left\{t\in[0,T]: u_tx\not\in X_\delta\right\}\right)<\epsilon \text{ for all large $T>0$}
\end{equation*}
or there exists a proper abelian subgroup $L$ of $G$ such that $\Delta:=g\Gamma g^{-1}\cap L$ is a lattice in $L$, $\{u_t\}_{t\in\mathbb{R}}$ normalizes $L$, and the covolume of $u_t\Delta u_{-t}$ in $u_tLu_{-t}=L$ is a fixed constant less than $(\beta_\epsilon\delta)^{\dim(L)} $ for all $t\geqslant0$, where  we define the measure on $L/\Delta$ as the measure induced by a fixed right invariant inner product on  $\Lie(G)$.
\end{theorem}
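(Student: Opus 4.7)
The plan is to combine Theorem~\ref{maintheorem1} with the $(C,\alpha)$-good function framework of Kleinbock--Margulis \cite{KM} and Kleinbock \cite{K}. Assume that the first alternative fails, so there exist arbitrarily large $T>0$ with $m(\{t\in[0,T]:u_tx\notin X_\delta\})\geqslant\epsilon T$; the goal is to manufacture the abelian subgroup $L$ from the short elements of $g\Gamma g^{-1}$ that the flow forces into a neighborhood of $e$. For $\eta\in g\Gamma g^{-1}\setminus\{e\}$ set $\psi_\eta(t):=d(u_t\eta u_{-t},e)$ in a fixed right-invariant Riemannian metric. After passing to exponential coordinates, each $\psi_\eta$ is comparable to the norm of a polynomial map $\mathbb{R}\to\Lie(G)$ of degree bounded in terms of $G$ alone, and is therefore $(C,\alpha)$-good on $\mathbb{R}$ for universal $C,\alpha$. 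The assumption $u_tx\notin X_\delta$ translates to the existence of some $\eta$ with $\psi_\eta(t)<\delta$.

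Next I would localize these small elements inside a single abelian subgroup. Choose $\delta_\epsilon$ smaller than the Zassenhaus/Kazhdan--Margulis constant of $G$, so that any discrete subgroup of $G$ generated by elements of $B_{\delta_\epsilon}$ is nilpotent. In a rank-one semisimple $G$, the connected Lie subgroups admitting lattices and normalized by a nontrivial one-parameter unipotent subgroup admit a clean classification, and one verifies in each of the four families $SO(n,1), SU(n,1), Sp(n,1), F_{4(-20)}$ that the discrete group generated by sufficiently short elements sits inside an abelian $\{u_t\}$-normalized subgroup (using the structure of the unipotent radical of a minimal parabolic). Since there are only finitely many $\Ad(G)$-conjugacy classes of such admissible $L\subset G$, a pigeonhole reduction concentrates a positive fraction of bad times within a single class.

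I would then apply Kleinbock's quantitative non-divergence to the finite family $\{\psi_\eta:\eta\in\Delta_L\setminus\{e\}\}$ where $\Delta_L:=g\Gamma g^{-1}\cap L$. Its dichotomy gives either $m(\{t\in[0,T]:\min_\eta\psi_\eta(t)<\delta\})\leqslant\epsilon T$, contradicting the hypothesis, or
\[ \sup_{t\in[0,T]}\min_{\eta\in\Delta_L\setminus\{e\}}\psi_\eta(t)<c_\epsilon\delta \]
for an explicit $c_\epsilon>0$. The latter asserts that the shortest vector of the lattice $u_t\Delta_L u_{-t}\subset L$ is bounded by $c_\epsilon\delta$ uniformly in $t$. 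An iteration applied to successive sublattices of $\Delta_L$ (producing a Minkowski-reduced basis of short vectors, each controlled by the same $(C,\alpha)$-good machinery) together with Minkowski's second theorem in $\Lie(L)$ upgrades this to $\mathrm{covol}(u_t\Delta_L u_{-t})\leqslant(\beta_\epsilon\delta)^{\dim L}$ for $\beta_\epsilon$ proportional to $c_\epsilon$. Since $\Ad_{u_t}$ acts on $L$ by a unipotent, hence volume-preserving, automorphism, this covolume is independent of $t$, yielding the conclusion.

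The main obstacle is the second step above: demonstrating that short elements arising at different bad times can be organized into a single abelian $\{u_t\}$-normalized subgroup rather than a collection of incompatible nilpotent pieces. In the $\SL(n,\mathbb{R})/\SL(n,\mathbb{Z})$ setting Kleinbock--Margulis sidestep this by working with the natural flag filtration of rational subspaces, but here no such canonical filtration exists, so the argument must rely on rank-one rigidity together with an iterated use of the Margulis lemma applied to the growing collection of short generators. A secondary difficulty is tracking how the $(C,\alpha)$-good constants behave through the pigeonhole and Minkowski-reduction steps, which is what one needs in order to extract the explicit computable $\delta_\epsilon$ and $\beta_\epsilon$ asserted in the theorem.
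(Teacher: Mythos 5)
Your proposal correctly identifies the overall strategy—reduce to the non-divergence machinery of Kleinbock--Margulis and then extract an abelian subgroup from the short elements—but it leaves a genuine gap precisely at the step you yourself flag as the ``main obstacle,'' and the paper's resolution is quite different from what you sketch. Your pigeonhole idea (``a positive fraction of bad times lies in a single conjugacy class of admissible $L$'') does not suffice: positive proportion at different times would give you different conjugates of the abelian subgroup, and you would still have no single discrete subgroup of $\Lie(G)$ to which the polynomial-covolume constancy argument could be applied. The paper's key input is Lemma~\ref{l37} combined with the rigidity statement in Lemma~\ref{l32}/Corollary~\ref{c32}: once the entire forward trajectory $\{u_t x\}_{t\geqslant 0}$ stays outside $X_{\eta r_0}$, there is a \emph{single} $(\eta r_0, u_t, x)$-dominant subgroup $H$ on all of $[0,\infty)$, conjugate to $MA$ or $MN$ (not merely a dominant subgroup in the same conjugacy class at most times). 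This is what allows one to form a single $\tilde F \subseteq g\Gamma g^{-1}\cap H$ in a 2-step nilpotent $\tilde H$, linearize via Lemma~\ref{l35} to a $\mathbb{Z}$-module $\Lambda_0=\zspan\{\exp^{-1}\tilde F\}\subset\Lie(G)$, and then apply Corollary~\ref{thm4}. Nothing in your sketch substitutes for this uniqueness over the whole ray, and the Margulis lemma alone does not provide it—different bad intervals a priori yield different nilpotent pieces.

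Two further points where your proposal diverges or hand-waves. First, your statement of the dichotomy is not quite Kleinbock's: the second alternative of Theorem~\ref{thm4main} is that some primitive subgroup $\Delta$ has $\sup_t\|h_t\Delta\|_0$ small, not that the shortest vector of the full lattice stays small for all $t$; the shortest-vector statement is a consequence (via Lemma~\ref{l42}) but is not what you feed back into the iteration. The Minkowski-reduction-plus-second-theorem detour you propose is therefore unnecessary: the paper simply takes the offending primitive $\Lambda$ with $\|\Ad_{u_t}\Lambda\|_0$ bounded for all $t\geqslant 0$, invokes Proposition~\ref{p44} to see this covolume is constant, and reads off the covolume bound directly. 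Second, your route to abelianness (``one verifies in each of the four families $SO(n,1),\dots$'') is replaced in the paper by a purely algebraic case split that works inside the 2-step nilpotent $\tilde H$: either the commutator span $V=\rspan[S,S]$ is nonzero, in which case $L=\exp(V)$ is automatically abelian and $\Ad_{u_t}$-trivial because it lies in $[\Lie(H_U),\Lie(H_U)]$ and $u_t\in H_U$; or $V=\{0\}$, in which case the invariant subspace $W_1$ constructed from the short-vector iteration is itself an abelian subalgebra. No type-by-type classification is used or needed.
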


Similar results hold in the case when $G$ is the product of $\mathbb{R}$-rank 1 Lie groups:

\begin{theorem}\label{maintheorem2}
 Suppose  $G=G_1\times G_2\times\dots\times G_n$ where each $G_i$ is semisimple and of $\mathbb{R}$-rank $1$. 
 Let $\Gamma$ be a discrete subgroup of $G$ such that the projection of $\Gamma$ onto each coordinate does not contain nontrivial elliptic elements.
Then the conclusions of Theorems \ref{maintheorem1} and \ref{uniform} hold.
\end{theorem}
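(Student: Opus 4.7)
My plan is to reduce Theorem~\ref{maintheorem2} to the rank-1 results (Theorems~\ref{maintheorem1} and~\ref{uniform}) by decomposing the unipotent flow into its factor components and analyzing each factor separately. Write $g=(g_1,\dots,g_n)$, $u_t = (u_t^{(1)},\dots,u_t^{(n)})$, and let $\pi_i : G \to G_i$ denote the canonical projection. Equipping $G$ with the product of right-invariant Riemannian metrics from the $G_i$, an element $h \in G$ is $\delta$-close to the identity iff each $\pi_i(h)$ is $\delta$-close to $e_{G_i}$. Consequently $u_t x \notin X_\delta$ iff there exists a single $\gamma \in \Gamma\setminus\{e\}$ such that $u_t^{(i)} g_i \pi_i(\gamma) g_i^{-1} u_t^{(i)}{}^{-1}$ lies in the $\delta$-ball of $G_i$ for every $i$ with $\pi_i(\gamma)\neq e$. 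Write $B_\gamma$ for this set of bad times in $[0,T]$ and $B_\gamma^{(i)}$ for its factorwise analogue in $G_i$.

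For each factor I would apply the rank-1 mechanism underlying Theorem~\ref{maintheorem1} to the flow $t \mapsto u_t^{(i)} g_i \gamma_i g_i^{-1} u_t^{(i)}{}^{-1}$ in $G_i$, where $\gamma_i$ ranges over nontrivial projections of $\Gamma\setminus\{e\}$. The hypothesis that $\pi_i(\Gamma)$ contains no nontrivial elliptic elements plays the role of torsion-freeness in the rank-1 setting: every nonidentity element of $\pi_i(\Gamma)$ is unipotent or loxodromic, so the parabolic/loxodromic dichotomy driving the rank-1 proof remains available in each coordinate. Since each $\gamma\in \Gamma\setminus\{e\}$ must have $\pi_i(\gamma) \neq e$ for at least one index $i$, we have $B_\gamma \subseteq B_\gamma^{(i)}$ for that $i$, and hence $\bigcup_{\gamma \in \Gamma\setminus\{e\}} B_\gamma \subseteq \bigcup_{i=1}^n \bigcup_{\gamma : \pi_i(\gamma)\neq e} B_\gamma^{(i)}$. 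Bounding each inner union by $\epsilon/n$ via the rank-1 argument in $G_i$ and then a union bound over $i$ yields the analogue of~\eqref{thm1}.

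For the analogue of Theorem~\ref{uniform}, the expected dichotomy reads: either the trajectory is $(1-\epsilon)$-recurrent to $X_\delta$, or there exist $\{u_t^{(i)}\}$-normalized abelian subgroups $L_i \leqslant G_i$ (some possibly trivial) produced by the rank-1 dichotomy in each factor, whose product $L = L_1 \times \dots \times L_n$ is normalized by $\{u_t\}$ and meets $g\Gamma g^{-1}$ in a small-covolume lattice. Discreteness of $\Gamma$ in $G$ guarantees that $g\Gamma g^{-1}\cap L$ is automatically discrete, and the factorwise covolume bounds multiply to give the stated product covolume bound with exponent $\dim(L) = \sum_i \dim(L_i)$.

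The chief obstacle I anticipate is that $\pi_i(\Gamma)$ need not be discrete in $G_i$, so Theorem~\ref{maintheorem1} cannot be invoked on each factor as a black box. Overcoming this requires revisiting the proof of Theorem~\ref{maintheorem1} and verifying that the rank-1 argument only uses discreteness of $\Gamma$ in the ambient group $G$---short conjugates $g\gamma g^{-1}$ form a locally finite set in $G$ by the global discreteness of $\Gamma$---and not discreteness of any projection. The no-elliptic hypothesis on each $\pi_i(\Gamma)$ is precisely what prevents arbitrarily small elliptic elements from accumulating under projection and spoiling the factorwise argument.
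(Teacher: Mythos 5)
Your proposal does not match the paper's approach, and more importantly, it has a genuine gap that the paper's introduction to Section~5 explicitly anticipates.

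The step that fails is the passage from $B_\gamma$ to $B_\gamma^{(i)}$ followed by the union bound. When you replace $B_\gamma$ (times when the \emph{full} conjugate $u_t g\gamma g^{-1} u_{-t}$ is small) by $B_\gamma^{(i)}$ (times when only the $i$-th coordinate is small), you discard the constraint that discreteness of $\Gamma$ in $G$ imposes through the remaining coordinates, and that constraint is precisely what makes any non-divergence argument possible. Discreteness of $\Gamma$ in $G$ gives local finiteness of $\{g\gamma g^{-1}\}$, but says nothing about the projections $\{\pi_i(g\gamma g^{-1})\}$: for an irreducible $\Gamma$ (the interesting case) the set $\pi_i(\Gamma)$ is not discrete and can accumulate at $e$ even without elliptic elements. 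A sequence $\gamma_m\in\Gamma$ with $\pi_i(\gamma_m)\to e$ and $\pi_j(\gamma_m)\to\infty$ for some $j\neq i$ is entirely compatible with discreteness of $\Gamma$; for such a sequence, $B_{\gamma_m}^{(i)}$ is an interval whose length can grow without bound as $m\to\infty$, so $\bigcup_{\gamma:\pi_i(\gamma)\neq e} B_\gamma^{(i)}$ may equal all of $[0,T]$. Consequently there is no $(C,\alpha)$-good polynomial, no $(\delta,u_t,x)$-dominant subgroup of $G_i$, and no lattice in a Lie algebra to which Kleinbock--Margulis could be applied at the factor level. Your closing sentence asserts that the rank-1 proof ``only uses discreteness of $\Gamma$ in the ambient group $G$,'' but Lemmas~\ref{l36} and~\ref{l37} (finitely many bad components, existence and uniqueness of a dominant subgroup) are statements about a discrete subgroup of the rank-1 group being acted on, and these break when $\pi_i(\Gamma)$ is not discrete.

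The paper's mechanism is quite different and is built to handle exactly this issue. It never projects the dynamics to factors. Instead it works with $\CPN$-subgroups of the full product (Lemma~\ref{l51}), observes that \emph{locally} (at each time) there is a dominant product subgroup $H_1\times\dots\times H_n$, but that there is no single dominant subgroup over an entire ``bad'' interval. To fix this it introduces the maximal intervals $I_T(H,s,r)$ on which a given $\CPN$-subgroup remains intersecting, extends these intervals to recover a lower bound for $\rho$ in Corollary~\ref{thm4} (Proposition~\ref{p51}), and crucially proves Proposition~\ref{p52}: overlaps between intervals attached to \emph{distinct} $\CPN_p$-subgroups with the same hull are contained in intervals attached to $\CPN_{p-1}$-subgroups, i.e., the intersection pushes the problem down to fewer nontrivial coordinates. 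The combinatorial bookkeeping (the collections $\mathcal{T}^p$, $\mathcal{F}^p$, $\tilde{\mathcal{F}}^p$ in Proposition~\ref{p54}) and induction on $p=n,n-1,\dots,1$ is what replaces the impossible factorwise union bound. None of this structure appears in your proposal, and without it the argument does not close. The same objection applies to the Theorem~\ref{uniform} dichotomy: the abelian subgroup $L$ is extracted from a single $\CPN$-dominant subgroup $H$ of the product, not assembled as $L_1\times\dots\times L_n$ from separate rank-1 dichotomies, and the covolume estimate is for $g\Gamma g^{-1}\cap L$ as a lattice in $L$, which does not follow from factorwise information since $g\Gamma g^{-1}$ is not a product.
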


In fact, from our proof we can conclude the following

\begin{theorem}
Suppose  $G=\SL(2,\mathbb{C})\times \SL(2,\mathbb{C})\times\dots\times \SL(2,\mathbb{C})$.
 Let $\Gamma$ be any discrete subgroup of $G$. Then the conclusions of Theorems \ref{maintheorem1} and \ref{uniform} hold.\end{theorem}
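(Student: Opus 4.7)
The plan is to adapt the argument of Theorem~\ref{maintheorem2}, where the only hypothesis that must be dropped is that each coordinate projection of $\Gamma$ contain no nontrivial elliptic element. For $G = \SL(2,\mathbb{C})^n$, this hypothesis is unnecessary because of the rigidity of discrete subgroups of $\SL(2,\mathbb{C})$ encoded in Jorgensen's inequality: for any discrete non-elementary subgroup $\langle A,B\rangle \subset \SL(2,\mathbb{C})$,
\[ |\operatorname{tr}(A)^2 - 4| + |\operatorname{tr}(ABA^{-1}B^{-1}) - 2| \geq 1. \]
Hence any two sufficiently short elements of a discrete subgroup of $\SL(2,\mathbb{C})$ generate an elementary, and so virtually abelian, subgroup.

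First I would identify where the no-elliptic-projection hypothesis enters the proof of Theorem~\ref{maintheorem2}. It should be invoked to ensure that short elements $\gamma \in g\Gamma g^{-1}$ responsible for the orbit leaving $X_\delta$ admit polynomial-in-$t$ control of $\|u_t\gamma u_{-t}\|$: an elliptic projection with small rotation angle can be ``accidentally'' close to the identity without being genuinely near a unipotent direction, breaking the growth estimate used in the covering argument. To eliminate the hypothesis, I would argue that, by Jorgensen's inequality applied inside the discrete group $\Gamma \subset G$, any two elements of $g\Gamma g^{-1}$ whose conjugates by $u_t g$ are simultaneously very short must have projections to each factor generating an elementary subgroup of $\SL(2,\mathbb{C})$. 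A standard pigeonhole then consolidates the short elements arising at different times $t$ into a single virtually abelian subgroup, which plays the role of the subgroup $L$ appearing in Theorem~\ref{uniform}.

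To handle an elliptic projection directly, I would use that the centralizer of a nontrivial unipotent element in $\SL(2,\mathbb{C})$ contains no nontrivial elliptic element. Consequently, for a nontrivial elliptic projection $\gamma^{(i)}$ and a nontrivial unipotent component $u_t^{(i)}$, the quantity $\|u_t^{(i)} \gamma^{(i)} u_{-t}^{(i)} - I\|$ grows polynomially in $t$, so for fixed $\delta$ the set $\{t : u_t \gamma u_{-t}$ is $\delta$-short in factor $i\}$ is a finite union of bounded intervals. These intervals are absorbed into the $\epsilon$-proportion of bad time permitted in Theorem~\ref{maintheorem1}, while in the dichotomy of Theorem~\ref{uniform} they are covered by an iteration that ultimately singles out the $\{u_t\}$-normalized abelian subgroup $L$.

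The main obstacle I anticipate is that the projections $\pi_i(\Gamma)$ need not be discrete in $\SL(2,\mathbb{C})$ even though $\Gamma$ is discrete in $G$, so Jorgensen's inequality cannot be applied directly to projected generators. I would circumvent this by invoking the inequality inside $\Gamma$ itself and extracting its factor-wise consequences through the product structure, using that simultaneously short conjugates in all active factors force the relevant pairs to lie in elementary subgroups coordinate by coordinate, and that incompatible elementary types across different factors cannot produce sustained sequences of short conjugates. Once this coordinate bookkeeping is in place, the construction of $L$ and the quantitative conclusions follow as in the proof of Theorem~\ref{maintheorem2}.
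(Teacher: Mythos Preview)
Your plan misidentifies where property $(*)$ enters the proof of Theorem~\ref{maintheorem2}. It is not used to obtain polynomial growth of $\|u_t\gamma u_{-t}\|$; that is automatic from the unipotence of $\{u_t\}$ regardless of whether $\gamma$ has elliptic projections. Property $(*)$ is used in exactly two places: Lemma~\ref{l51}, to put the Zassenhaus nilpotent group into a product of conjugates of $M_iA_i$ or $M_iN_i$, and Proposition~\ref{p52}, where Corollary~\ref{c32} is invoked through a \emph{non-elliptic} element in the intersection of two such conjugates.

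For $G_i=\SL(2,\mathbb{C})$ neither use requires $(*)$, and this is the content of the paper's assertion that the theorem follows ``from our proof''. The point is purely structural: here $K_i=\SU(2)$ and $M_i$ is already a maximal torus of $K_i$, so the obstruction in the Remark following Lemma~3.1 disappears. Concretely, every \emph{connected} nilpotent subgroup of $\SL(2,\mathbb{C})$ is abelian and hence lies in a conjugate of $M_iA_i$ or of $N_i$; this gives Lemma~\ref{l51} without any hypothesis on elliptic projections. Moreover, any non-central element of $\SL(2,\mathbb{C})$ has abelian centralizer, so two conjugates of $M_iA_i$ or of $N_i$ that share a non-central element coincide. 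Since $B_{r_0}$ contains no conjugate of $-I$, the elements $\pi_p(u_t h^{(i)}u_{-t})$ appearing in Proposition~\ref{p52} are automatically non-central, and the argument there goes through with ``non-central'' in place of ``non-elliptic''.

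Your proposed route through J{\o}rgensen's inequality is therefore unnecessary, and as you yourself note it runs into a genuine obstacle: the projections $\pi_i(\Gamma)$ need not be discrete in $\SL(2,\mathbb{C})$, so the inequality cannot be applied factorwise. Your suggested workaround (``invoking the inequality inside $\Gamma$ itself and extracting its factor-wise consequences'') does not give the coordinatewise rigidity that Proposition~\ref{p52} requires, and the sketch does not explain how elementary-group conclusions in the product would force the two $\CPN$-subgroups to agree on the $p$-th factor. The simpler observation above both closes the gap and matches the paper's intended argument.
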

\begin{remark}Our proof uses the existence of a Zassenhaus neighborhood, the  Lie algebra structure, and the Bruhat decomposition for $\mathbb{R}$-rank 1 semisimple Lie groups. We make direct use of the result of Kleinbock and Margulis on quantitative non-divergence on the space of unimodular lattices in $\mathbb{R}^n$, where $n$ is the dimension of $G$. Unlike the proof of Dani's theorem, we do not make use of the structure theorem for the fundamental domain of lattices in $\mathbb{R}$-rank 1 semisimple Lie groups.
\end{remark}
\section*{Acknowledgement} We would like to thank  Nimish Shah and Dimitry Kleinbock for their very helpful suggestions
and comments during our discussions.

\section{Notations and Preliminaries} \label{sec:two}

Let   $G$ be a  semisimple Lie group of $\mathbb{R}$-rank 1  and let $\mathfrak g$ be its Lie algebra. 
Let $K$ be a maximal compact subgroup of $G$ and let $\mathfrak k$ be it's Lie algebra. Let $\langle\cdot,\cdot
\rangle$ denote the Killing form on $\mathfrak g$ and let $\mathfrak p$ be the orthogonal complement to $\mathfrak 
k$ with respect to the Killing form. We have 
the Cartan decomposition 
$$\mathfrak g=\mathfrak k\oplus\mathfrak p.$$
Fix a one dimensional subspace $\mathfrak{a}$ of $\mathfrak{p}$.  
Consider the root space decomposition of $\mathfrak{g}$ with respect to $\mathfrak a$.  For a root $\beta\in\mathfrak a^*$, let $
\mathfrak {u}^\beta$ denote the root space associated to $\beta$ and let $\mathfrak z$ denote the space $\mathfrak 
{u}^0$. Since $\mathfrak g$ is of $\mathbb{R}$-rank 1,
 there exists a real root $\alpha$ such that:
$$\mathfrak g=\mathfrak {u}^{-2\alpha}\oplus\mathfrak {u}^{-\alpha}\oplus\mathfrak z\oplus\mathfrak {u}^\alpha
\oplus\mathfrak {u}^{2\alpha}.$$

Let $\mathfrak{n}^+=\mathfrak {u}^\alpha\oplus\mathfrak {u}^{2\alpha}$ and $\mathfrak n^-=\mathfrak {u}
^{-\alpha}\oplus\mathfrak {u}^{-2\alpha}$. We define $N^{+}=\exp(\mathfrak {n}^{+})
$ and $N^{-}=\exp(\mathfrak {n}^{-})$. Throughout we will abbreviate $N^+$ by 
$N$ and $\mathfrak{n}^{+}$ by $\mathfrak{n}$. Let $\mathfrak m=\mathfrak z\cap\mathfrak k$. Then $$\mathfrak z=\mathfrak a\oplus\mathfrak m.$$

Let $A=\exp(\mathfrak a)$ and $M=Z_G(A)\cap K$. Then $\Lie(M)=\mathfrak m$.
Let $W$ be the Weyl group for $G$.  As $G$ is of $\mathbb{R}$-rank 1, $W$ is a group of order two. Let $\tilde{\omega}\neq e
\in W$, and fix a representative  $\omega\in G$ of $\tilde{\omega}$. We 
have the following Bruhat decomposition for semisimple groups of $\mathbb{R}$-rank 1 \cite[Section 12.14]{R}
\begin{equation}\label{bruhat}G=MAN\bigsqcup MAN\omega MAN.\end{equation}

Let $\Theta$ be the involution on $\mathfrak g$ corresponding to our Cartan decomposition and define positive 
definite inner product $(\cdot,\cdot)$ on $\mathfrak g$ by 
$$(x,y)=-\langle x,\Theta(y)\rangle.$$
Define $$\|x\|:=(x,x)^{\frac12}.$$
By definition, the norm $\|x\|$ is $\Ad_k$ invariant for all $k\in K$.\\

A Zaussenhaus neighborhood, $\Omega$, for a Lie  group $F$ is defined as a neighborhood of $e\in F$ such that for any discrete group $\Delta\subseteq F$, there exists a  connected nilpotent subgroup $H\subseteq F$ such that $\Delta\cap \Omega\subseteq H$. Note that any Lie group $G$ admits a Zaussenhaus neighborhood \cite[Theorem 8.16]{R}.\\

Let $\mathfrak b_r$ denote the open ball of radius $r$ 
centered at the origin in $\mathfrak g$. Let $0<r_0<\frac{1}{k}$, where $k$ is the dimension of $G$,  be  such that the exponential map homeomorphically maps  $
\mathfrak b_{r_0}$ into $G$ and $\exp(\mathfrak b_{r_0})$ is a Zaussenhaus neighborhood for $G$.  For $\eta\leqslant r_0$, 
denote $\exp(\mathfrak b_\eta)$ as $B_\eta$.\\

Let $\Gamma$ be a discrete subgroup of $G$ and let $X=G/\Gamma$. As usual, $G$ acts on $X$ by left 
translations.  For $x\in X$ let  $G_x:=\{g\in G: gx=x\}$.  Then $G_x$ is a conjugate of $\Gamma$. 
For any $0<\eta\leqslant r_0$, define 
\begin{equation*}
X_{\eta}:=\left\{x\in X: G_x\cap B_{\eta}=\{e\}\right\}, 
\end{equation*}
and for $\eta>r_0$ define $X_{\eta}=X$.
Using the notation and definitions above, inequality (\ref{thm1}) is equivalent to:
 \begin{equation}\label{thm3}
  m\left(\left\{t\in[0,T]:G_{u_tx}\cap B_\delta\neq \{e\}\right\}\right)\leqslant\epsilon T.
 \end{equation}
\section{Uniform Virtual Linearization}

In this section, we show that 
a unipotent trajectory lying in the complement of $X_{r_0}$ can be associated to a unipotent trajectory in $\SL(n,\mathbb{R})/\SL(n,\mathbb{Z})$, where $n$ is the dimension of $G$. This association allows us to apply the quantitative non-divergence results of Dani, Kleinbock, and Margulis to prove our result.
 Throughout this section, we assume that $G$ is a semisimple Lie group 
of $\mathbb{R}$-rank $1$. 
\subsection{Algebraic properties of rank-$1$ semisimple Lie groups}

\begin{lemma}
Any nilpotent subgroup of $G$ with no nontrivial elliptic elements is contained in a conjugate of either $MA$ or $MN
$.
\end{lemma}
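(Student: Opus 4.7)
My plan is to pass to the Zariski closure $\bar H$ and exploit the structure theory of nilpotent algebraic groups together with the rank one features of $G$. Since nilpotency of fixed class is a closed polynomial condition, $\bar H$ is again nilpotent. In characteristic zero a nilpotent linear algebraic group decomposes as $\bar H=T\times U$, where $U$ is the unipotent radical and $T$ is a torus which is central in $\bar H$. Write $T=T_c\times T_s$ for the decomposition into its compact and $\mathbb{R}$-split parts.

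The heart of the argument is the dichotomy $T_s=\{e\}$ or $U=\{e\}$. If both were nontrivial, pick nontrivial $h\in T_s$ and $u\in U$; they commute by centrality of $T$. In rank one every nontrivial hyperbolic element $h$ has centralizer $Z_G(h)=Z_G(gAg^{-1})=g(MA)g^{-1}$ for an appropriate $g$, since the one-dimensional $\mathbb{R}$-split Cartan is determined by any one of its nontrivial elements. But the Cartan $MA$ contains no nontrivial unipotent, contradicting $u\in Z_G(h)$.

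If $U=\{e\}$, then $\bar H=T$. When $T_s=\{e\}$ as well, $\bar H=T_c$ is compact and every element of $H$ is elliptic, forcing $H=\{e\}$. When $T_s\neq\{e\}$, we have $T_s\subset gAg^{-1}$ and $T_c\subset Z_G(T_s)=g(MA)g^{-1}$; compactness of $T_c$ forces $T_c\subset gMg^{-1}$, and so $H\subset g(MA)g^{-1}$. If instead $T_s=\{e\}$ and $U\neq\{e\}$, then $U\subset gNg^{-1}$ because every unipotent subgroup in rank one lies in a conjugate of the maximal unipotent. For any nontrivial $u\in U$, the standard rank one observation that $u$ fixes a unique boundary point shows $Z_G(u)\subset g(MAN)g^{-1}$, and the compact $T_c\subset Z_G(U)$ must lie in $gMg^{-1}$, the only compact subgroup of $g(MAN)g^{-1}$ up to conjugacy. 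Combining, $H\subset\bar H=T_cU\subset g(MN)g^{-1}$.

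The step I expect to be most delicate is justifying the structural decomposition $\bar H=T\times U$ with $T$ central in the possibly disconnected setting; this rests on the classical fact that in a nilpotent algebraic group over a field of characteristic zero the semisimple elements form a unique central torus. The remaining inputs -- that the centralizer of a nontrivial hyperbolic element is a conjugate of $MA$ and that the centralizer of a nontrivial unipotent sits in a conjugate of $MAN$ -- follow from the Bruhat decomposition~\eqref{bruhat} together with the $\mathbb{R}$-rank one hypothesis already set up in the paper.
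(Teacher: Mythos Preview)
Your argument is correct and takes a genuinely different route from the paper's. The paper picks a single nontrivial element $x$ in the center of the nilpotent group, writes its multiplicative Jordan decomposition $x=khu$, and splits into the cases $h\neq e$ and $h=e$ (hence $u\neq e$, since $x$ is not elliptic). It then shows by a direct Bruhat computation that the centralizer of a nontrivial hyperbolic element lies in a conjugate of $MA$ and, more laboriously, that the centralizer of a nontrivial $u\in N$ lies in $MN$; since every element of $H$ commutes with $x$ (and hence with each Jordan component of $x$), the conclusion follows. Your approach instead globalizes this via Zariski closure and the structure theorem $\bar H=T\times U$, then runs the dichotomy $T_s=\{e\}$ versus $U=\{e\}$ at the level of subgroups rather than a single central element. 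Both proofs ultimately rest on the same two rank-one centralizer facts; the paper derives them by hand from~\eqref{bruhat}, while you invoke them as standard. Your version is more conceptual and generalizes more readily, at the cost of importing the algebraic-group structure theorem (and tacitly passing to $\Ad(G)$ to have a linear setting); the paper's version is entirely self-contained.

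Two small points to tighten. First, in the unipotent case you assert $Z_G(u)\subset g(MAN)g^{-1}$ and then that the compact $T_c$ ``must lie in $gMg^{-1}$''; strictly speaking a compact subgroup of $MAN$ is only \emph{conjugate} into $M$ by some element of $MAN$, so you should replace $g$ by $g\cdot(man)$ --- this is harmless because $MAN$ normalizes $N$, so $U$ stays inside the conjugate of $N$. Alternatively, note (as the paper actually proves) the sharper inclusion $Z_G(u)\subset g(MN)g^{-1}$, which lets you skip the compactness step entirely. Second, the decomposition $T=T_c\times T_s$ is in general only an almost-direct product over $\mathbb{R}$; this does not affect your dichotomy, but is worth stating precisely.
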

\begin{proof}
 Let $U$ be a nilpotent group with no nontrivial elliptic elements. Let $x$ be a nontrivial element in the center of $U$. By assumption $x$ is not elliptic.\\
 
 By the multiplicative Jordan decomposition, there exist three commuting elements $k$, $h$ and $u$ in 
 $G$ such that $x=khu$
 and $k$ is elliptic, $h$ is hyperbolic and $u$ is unipotent. Moreover, $k$, $h$ and $u$ commute with every 
 element commuting with $x$.\\
 
 If $h\neq e$, then $h$ is conjugate to an element in $A$. For simplicity, assume $h\in A$. All elements commuting with $h$ are in $MA$, since $G$ is $\mathbb{R}$-rank 1. By construction of $h$, all elements commuting with $x$ also commute with $h$, so $U$ is contained in $MA$.\\
 
 If $h=e$, since $x$ is not eliptic, $u\neq e$, and $u$ has a conjugate in $N$. For simplicity, we assume $u\in N$. We shall show that
 all elements commuting with $u$ are in $MN$. Suppose $g$ commutes with $u$. By the Bruhat decomposition, $g\in MAN\omega MAN$ or  $g\in MAN$.\\
  
  If $g\in MAN \omega MAN$, then there exist $g_1,g_2\in MAN$ such that $g=g_1\omega g_2$. We have $g_1\omega 
  g_2u=ug_1\omega g_2$
  and $\omega g_2ug_2^{-1}\omega^{-1}=g_1^{-1}ug_1$. Since $MAN$ normalizes $N$ and conjugation by $\omega$ sends 
  $N$ to $N^{-}$, we have
  $g_2ug_2^{-1}=g_1^{-1}ug_1\in N\cap N^-=\{e\}$. By assumption $u\neq e$, so this case cannot occur.\\
  
  If $g\in MAN$, then there exist $m$, $a$, and $n$ such that $g=man$.  Since $N$ is simply connected the exponential map  from $\mathfrak{n}=\mathfrak{u}^{\alpha} 
  \oplus\mathfrak{u}^{2\alpha}$ to $N$ is bijective. Thus there exists $u_\alpha\in\mathfrak{u}^{\alpha}$, $u_{2\alpha}\in\mathfrak{u}^{2\alpha}$ and $n_0\in\mathfrak n$ such that  $u=\exp(u_\alpha+u_{2\alpha})$ and $n=\exp(n_0)$. We have that
  \begin{eqnarray*}
   manun^{-1}a^{-1}m^{-1} &=& u\\
   \Ad(ma)\Ad(n)(u_\alpha+u_{2\alpha}) &=& u_\alpha+u_{2\alpha}\\
   \Ad(ma)[\exp(\ad(n_0))(u_\alpha+u_{2\alpha})] &=& u_\alpha+u_{2\alpha}\\
   \Ad(ma)(u_\alpha+u_{2\alpha}+\ad(n_0)(u_\alpha)) &=& u_\alpha+u_{2\alpha}.
  \end{eqnarray*}
  $MA$ normalizes $\mathfrak{u}^{\alpha}$ and $\mathfrak{u}^{2\alpha}$ respectively, so 
  \begin{eqnarray}\label{eqnl31}
   \Ad(ma)(u_\alpha) &=& u_\alpha\\
  \label{eqnl312} \Ad(ma)(u_{2\alpha}+\ad(n_0)(u_\alpha)) &=& u_{2\alpha}.
  \end{eqnarray}
To show that $g\in MN$, we have to show $a=e$. Suppose that $a\neq e$. Then from equation ($\ref{eqnl31}$) and the fact that our norm is $\Ad_m$ invariant, we get
  \begin{equation*}
   \Vert u_\alpha\Vert=\Vert \Ad(m)\Ad(a)(u_\alpha)\Vert=\Vert \Ad(a)(u_\alpha)\Vert=|\alpha(a)|\Vert u_\alpha\Vert.
  \end{equation*}
 Since $a\neq e$, $|\alpha(a)|\neq 1$, so $u_\alpha=0$. Then by the same argument with $u_\alpha=0$,
  we get from equation (\ref{eqnl312}) that $u_{2\alpha}=0$. This forces $u$ to be $e$. However, by assumption $u\neq e$.
  Therefore $g\in MN$.\\
  
  Thus all elements commuting with $u$ are in $MN$. In particular $ U$ is contained in $MN$.
\end{proof}

\begin{remark} A maximal compact abelian subgroup of $K$ may not be contained in $M$.  Thus, the above lemma fails if we allow nilpotent subgroups with non-trivial elliptic elements. As an example, consider $SO(4,1)$. In this case, $K$ is isomorphic to $SO(4)$ and $M$ is isomorphic to $SO(3)$. $SO(4)$ contains a two dimensional torus, while any abelian subgroup of $SO(3)$ is either 0 or 1 dimensional. 
\end{remark}

\begin{corollary}\label{l31}For any torsion-free discrete subgroup $\Lambda$ of $G$, $\Lambda\cap B_{r_0}$ is 
contained in a conjugate of either $MA$ or $MN$, where $B_{r_0}$ is a Zassenhaus neighborhood.
\end{corollary}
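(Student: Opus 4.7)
The plan is to combine the Zassenhaus property of $B_{r_0}$ with the preceding lemma. By the definition of $B_{r_0}$ as a Zassenhaus neighborhood, applied to the discrete subgroup $\Lambda$, there exists a connected nilpotent subgroup $H\subseteq G$ with $\Lambda\cap B_{r_0}\subseteq H$. I would then pass to $U:=\Lambda\cap H$, which contains $\Lambda\cap B_{r_0}$ and is nilpotent as an abstract subgroup of the nilpotent group $H$. Since $\Lambda$ is torsion-free and discrete, so is $U$.

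The key auxiliary observation bridging this to the previous lemma is: any torsion-free discrete subgroup of $G$ contains no nontrivial elliptic elements. Indeed, if $g\in G$ is elliptic in the Jordan sense, then $g$ is conjugate into the maximal compact subgroup $K$, so $\overline{\langle g\rangle}$ lies in a compact subgroup of $G$. If moreover $g$ belongs to a discrete subgroup, then $\langle g\rangle$ is a discrete subset of a compact set, hence finite, forcing $g$ to have finite order. This contradicts torsion-freeness.

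Applied to $U$, this shows $U$ is a nilpotent subgroup of $G$ with no nontrivial elliptic elements. If $U$ is trivial, then $\Lambda\cap B_{r_0}=\{e\}$ and the conclusion holds trivially. Otherwise, the preceding lemma applies directly to $U$, producing a conjugate of $MA$ or $MN$ containing $U$, and hence containing $\Lambda\cap B_{r_0}$. No serious obstacle is anticipated; the only point requiring care is the ``elliptic plus discrete implies finite order'' step, which reduces to the standard fact that a discrete subgroup of a compact Lie group is finite.
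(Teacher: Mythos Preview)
Your proposal is correct and follows essentially the same approach as the paper: use the Zassenhaus property to place $\Lambda\cap B_{r_0}$ inside a nilpotent group, observe that torsion-freeness and discreteness rule out nontrivial elliptic elements, and invoke the preceding lemma. The paper's proof is terser (it simply asserts that the group generated by $\Lambda\cap B_{r_0}$ is torsion-free discrete nilpotent and appeals to the lemma), while you pass to $U=\Lambda\cap H$ and make the ``elliptic plus discrete implies finite order'' step explicit; both choices work and yield the same conclusion.
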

\begin{proof}
$B_{r_0}$ is a Zassenhaus neighborhood, so the group generated by
 $\Lambda\cap B_{r_0}$ is a torsion free discrete nilpotent subgroup, and the claim follows from Lemma \ref{l31}.
 \end{proof}
 
\begin{lemma}\label{l32}
 Let $g$ be an element in $G$. We have
  \begin{description}
   \item[(1)] $gMNg^{-1}\cap MA\subseteq M$.
   \item[(2)] If $gMAg^{-1}\cap MA\nsubseteq M$, then $gMAg^{-1}=MA$ and $g\in MA\cup \omega MA$.
   \item[(3)] If $gMNg^{-1}\cap MN$ contains a non-elliptic element, then $gMNg^{-1}=MN$ and in the case that $g$ is unipotent, $g\in N$.
  \end{description}
\end{lemma}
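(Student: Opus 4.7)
The unifying tool is the multiplicative Jordan decomposition, which is preserved under conjugation and whose components must lie in any algebraic subgroup. The key structural facts used are that every element of $MN$ has all its $\Ad$-eigenvalues on the unit circle (so its hyperbolic Jordan part is trivial), while an element of $MA$ is semisimple with hyperbolic part exactly its $A$-factor; in $\mathbb{R}$-rank one every non-trivial element of $A$ is regular, so $Z_G(a)=MA$ for all $a\in A\setminus\{e\}$.

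For (1) I would first observe that every $x=mn\in MN$ grows at most polynomially under iteration (since $M$ is compact and $N$ is nilpotent), hence the eigenvalues of $\Ad(x)$ all lie on the unit circle; this property is conjugation-invariant and so passes to $gMNg^{-1}$. On the other hand, if $h=m_0a\in MA$, then $\Ad(h)$ acts on $\mathfrak{u}^{\pm\alpha}$ with eigenvalues of modulus $e^{\pm\alpha(\log a)}$, forcing $a=e$ whenever $h\in gMNg^{-1}$. Thus $h\in M$.

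For (2), pick $h=m_1a_1=g(m_2a_2)g^{-1}\in gMAg^{-1}\cap MA$ with $a_1\neq e$. Because the commuting product $m_1\cdot a_1$ is the Jordan decomposition of $h$ (elliptic $\cdot$ hyperbolic), uniqueness of Jordan components forces $a_2\neq e$ and $ga_2g^{-1}=a_1$. Conjugating centralizers yields $gMAg^{-1}=gZ_G(a_2)g^{-1}=Z_G(a_1)=MA$, so $g\in N_G(MA)$. Since $A$ is the unique maximal connected $\mathbb{R}$-split subgroup of $MA$, $g$ normalizes $A$, and the order-two rank-one Weyl group $N_G(A)/Z_G(A)=\{e,\tilde\omega\}$ then gives $g\in MA\cup\omega MA$.

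For (3), let $h\in gMNg^{-1}\cap MN$ be non-elliptic. Since $MN$ is algebraic, the Jordan components of $h$ lie in $MN$; the projection $MN\to M$ (kernel $N$) sends the unipotent Jordan part to a unipotent element of the compact group $M$, which must be trivial, so the unipotent part of $h$ lies in $N$. Non-ellipticity of $h$ makes it a non-trivial element $u\in N$, and likewise $g^{-1}hg$ has non-trivial unipotent part $u'\in N$ with $gu'g^{-1}=u$. Invoking Bruhat, if $g=g_1\omega g_2\in MAN\omega MAN$ then since $g_1,g_2\in MAN$ normalize $N$, the identity $gu'g^{-1}=u$ rewrites as $\omega(g_2u'g_2^{-1})\omega^{-1}=g_1^{-1}ug_1$, placing a non-trivial element simultaneously in $N^-$ and $N$, a contradiction. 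Hence $g\in MAN$, and since $MAN$ normalizes $MN$ (each of $M$, $A$, $N$ does), we obtain $gMNg^{-1}=MN$. If additionally $g$ is unipotent, the Levi quotient map $MAN\to MA$ sends $g$ to a unipotent element of the reductive group $MA=M\times A$; continuity along the unipotent one-parameter subgroup through $g$ rules out any non-trivial central image, so the image is $e$ and $g\in\ker=N$.

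The main obstacle I anticipate is setting up the algebraic facts cleanly — that $MN$ is closed under Jordan decomposition, that unipotent elements of $MN$ (and of $MAN$) lie in $N$, and that $\Ad(x)$ has eigenvalues on the unit circle for $x\in MN$. Once these standard tools are in place, each part reduces to a short computation combining uniqueness of Jordan decomposition, centralizers of regular hyperbolic elements in rank one, and the Bruhat decomposition \eqref{bruhat}.
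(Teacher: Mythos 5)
Your proof is correct, and for parts (1) and (2) it takes a genuinely different route from the paper. The paper runs every claim through the two Bruhat cells $MAN$ and $MAN\omega MAN$ and performs direct computations: for (1) it reduces $gMNg^{-1}\cap MA$ to $MN\cap MA=M$ or to $MN^-\cap MAN=M$, and for (2) it shows explicitly (via commutation with $n$, then conjugation by $\omega$) that $g\in MA$ or $g\in MA\omega MA$. You instead use the multiplicative Jordan decomposition as the organizing principle: the polynomial-growth (or ``all $\Ad$-eigenvalues on the unit circle'') criterion disposes of (1) without touching Bruhat, and for (2) the uniqueness of Jordan parts gives $ga_2g^{-1}=a_1$, after which $gMAg^{-1}=gZ_G(a_2)g^{-1}=Z_G(a_1)=MA$ and the order-two Weyl group finishes it. Your argument is cleaner and more conceptual, at the modest cost of needing the facts that conjugation preserves Jordan components, that $Z_G(a)=MA$ for regular $a\in A$ in rank one, and that $A$ is intrinsically the set of hyperbolic elements of $MA$; the paper's computation is more elementary and self-contained but less transparent. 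For (3) the two proofs essentially coincide: both push the off-diagonal Bruhat cell into a contradiction between $N$ and $N^-$, and both use that $MAN$ normalizes $MN$; you add a preliminary reduction to nontrivial unipotents via Jordan decomposition, which is fine but not strictly needed (the paper works directly with non-elliptic elements and concludes $g_1^{-1}yg_1\in M$). You also supply a justification, which the paper asserts without comment, for ``$g$ unipotent in $MAN$ implies $g\in N$'' via the Levi quotient $MAN\to MA$; the ``continuity along the one-parameter subgroup'' remark there is unnecessary, since it is enough that the reductive group $MA\cong M\times A$ (compact times split torus) has no nontrivial unipotent elements. One small point worth tightening: rather than asserting directly that $MN$ is algebraic, it is cleaner to observe that the parabolic $P=MAN$ is Zariski closed with unipotent radical $N$ and Levi $MA$, so the Jordan parts of $h\in MN\subseteq P$ lie in $P$, and projecting to $MA$ shows the unipotent part lies in $\ker(P\to MA)=N$.
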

\begin{proof} We shall use the Bruhat decomposition for $G$. For each claim, we will first consider $g\in MAN$ and then consider $g\in MAN\omega MAN$.\\

  \item[(1)] Let 
$g\in MAN$. Since $MAN$ normalizes $MN$, $$gMNg^{-1}\cap MA=MN\cap MA=M.$$ 
Now let $g\in MAN\omega MAN$. $g=g_1\omega g_2$ for some $g_1,g_2\in MAN$.  Suppose  $y\in MA$  and $x\in 
   MN$ are such that $gxg^{-1}=y$. Then
$\omega^{-1}g_2xg_2^{-1}\omega=g_1^{-1} yg_1$. The left hand side belongs to $MN^{-}$, the right hand side 
belongs to $MAN$. Thus $g_1^{-1} yg_1\in MN^{-1}\cap MAN=M$ and $y$ must have trivial $A$ component. \\

  \item[(2)] Let $x\in MA$ and $y\in MA\setminus M$ be such that $gxg^{-1}=y$. Since the only elliptic elements in $MA$ are in $M$, we may assume that $x\in MA\setminus M$ as well. \\

 Let $g\in MAN$ Then $g=man$ for some $m\in M$, $a\in A$ and $n\in N$. We have $manxn^{-1}a^{-1}m^{-1}=y$. Then $nxn^{-1}\in MA$ and hence $x^{-1}nxn^{-1}\in MA$. However, $(x^
   {-1}nx)n^{-1}\in N$
   since $MA$ normalizes $N$. So $x^{-1}nxn^{-1}=e$ i.e. $x$ and $n$ commute. This forces $n$ to be identity as 
   $x\in MA\setminus  M$. Thus $g\in MA$ and $gMAg^{-1}=MA$.\\

Now let $g\in MAN\omega MAN$. Then $g=g_1\omega g_2$ for some $g_1,g_2\in MAN$. We have 
$$g_1\omega g_2 x g_2^{-1}\omega^{-1}g_1^{-1}=y.$$ Let $\beta_1=g_1^{-1} 
   y g_1\in MAN$ and 
   $\beta_2=g_2 x g_2^{-1}\in MAN$. Then $\omega\beta_2\omega^{-1}=\beta_1$. Conjugation by the Weyl 
   group element $\omega$ sends
   $N$ to $N^{-}$. Therefore, both $\beta_1$ and $\beta_2$ are elements in $MA$. Now $\beta_1=g_1^{-1}yg_1$ with $y\in MA\setminus M$ and $\beta_1\in MA$. By repeating the argument from the case when $g\in MAN$, it follows that $g_1\in MA$. In the same way, $g_2\in MA$.
   Since conjugation by $\omega$ sends $MA$ to itself, $gMAg^{-1}=MA$.\\

  \item[(3)]
  Let $g\in MAN$. Since $MAN$ normalizes $MN$, $gMNg^{-1}=MN$. If $g$ is unipotent, then $g\in N$.
\\

Now let  $g\in MAN\omega MAN$. Then $g=g_1\omega g_2$ for some $g_1,g_2\in MAN$. We can find non-elliptic
   elements $x,y\in MN$ such that
   $g_1\omega g_2 x g_2^{-1}\omega^{-1} g_1^{-1}=y$, i.e. $\omega g_2 x g_2^{-1}\omega^{-1}=g_1^{-1}yg_1$. 
   Since $g_1,g_2$ normalize
   $MN$ and $\omega$ sends $N$ to $N^{-1}$, $g_1^{-1}yg_1$ has to be an element in $M$. This contradicts the 
   assumption that $y$ is
   non-elliptic.\end{proof}
\begin{corollary}\label{c32} Let $H_1$ and $H_2$ be subgroups of $G$ such that each is conjugate to either $MA$ or $MN$. If there exists a non-elliptic element  $\gamma$  in $H_1\cap H_2$, then $H_1=H_2$. 
\end{corollary}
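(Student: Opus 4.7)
The plan is to reduce the statement to a direct case analysis driven entirely by Lemma \ref{l32}. I would first conjugate $G$ globally so as to normalize one of the two subgroups: writing $H_1 = g_1 L_1 g_1^{-1}$ with $L_1\in\{MA,MN\}$, conjugation by $g_1^{-1}$ sends $H_1$ to $L_1$ and $H_2$ to some $h L_2 h^{-1}$ with $L_2\in\{MA,MN\}$ and $h\in G$. Global conjugation preserves both the intersection and the property of being non-elliptic, so it suffices to prove the claim in the special case $H_1 = L_1\in\{MA,MN\}$ and $H_2 = h L_2 h^{-1}$.

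Next I would split into the four subcases indexed by the pair $(L_1,L_2)$. When $(L_1,L_2) = (MA,MA)$, the non-elliptic element $\gamma$ lies in $MA\cap hMAh^{-1}$, and since all elliptic elements of $MA$ lie in $M$, we get $\gamma\notin M$, so $hMAh^{-1}\cap MA\nsubseteq M$ and Lemma \ref{l32}(2) gives $hMAh^{-1}=MA$, i.e., $H_1=H_2$. When $(L_1,L_2) = (MN,MN)$, the element $\gamma\in MN\cap hMNh^{-1}$ is a non-elliptic element of $MN\cap hMNh^{-1}$, so Lemma \ref{l32}(3) directly yields $hMNh^{-1}=MN$, i.e., $H_1=H_2$. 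The two mixed cases should be ruled out: for $(L_1,L_2) = (MA,MN)$, Lemma \ref{l32}(1) applied to $h$ gives $MA\cap hMNh^{-1}\subseteq M$, which consists entirely of elliptic elements, contradicting the existence of $\gamma$; for $(L_1,L_2) = (MN,MA)$, I would rewrite $MN\cap hMAh^{-1} = h\bigl(h^{-1}MNh\cap MA\bigr)h^{-1}$ and apply Lemma \ref{l32}(1) to $h^{-1}$, obtaining again an intersection inside a conjugate of $M$, yielding the same contradiction.

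The only nontrivial bookkeeping is in the mixed cases, because Lemma \ref{l32}(1) is stated asymmetrically as $gMNg^{-1}\cap MA\subseteq M$ and one must pick the conjugating element on the correct side in order to apply it. Apart from that, no step looks substantive; the argument is a clean reading-off of the three parts of Lemma \ref{l32}, with the key algebraic input being that the elliptic elements of $MA$ form exactly the subgroup $M$, which is what lets a non-elliptic element in $H_1\cap H_2$ trigger part (2) of the lemma in the $(MA,MA)$ case.
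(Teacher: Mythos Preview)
Your proposal is correct and follows essentially the same approach as the paper: use Lemma~\ref{l32}(1) to rule out the mixed cases (one conjugate of $MA$ meeting one of $MN$ only in elliptic elements), and then apply parts (2) and (3) respectively in the $(MA,MA)$ and $(MN,MN)$ cases. Your write-up is simply more explicit about the normalizing conjugation and the asymmetric application of part (1), but the logic is identical.
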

\begin{proof}  By (1) $H_1$ and $H_2$ must either both be conjugate to $MA$ or both be conjugate to $MN$. If they are both conjugate to $MA$, then (2) implies $H_1=H_2$.  If they are both conjugate to $MN$, then (3) implies $H_1=H_2$.
\end{proof}
The following observation is crucial to our proof. 
\begin{lemma}\label{l33}  Let $F$ be a Lie group with Lie subgroups $H$ and $K$  such that $K$ is compact, $K$ normalizes $H$, and $F\cong K\ltimes H$.
Suppose there exists an automorphism $\theta$ of $F$ contracting $H$ and fixing every element of $K$, where by contracting, we mean that $\theta^k(h)\rightarrow e$ as $k\rightarrow \infty$ for all $h\in H$. 
Then there exists a constant $l_K$ depending only on $K$ such that for all discrete subgroups 
 $
\Lambda\subseteq F$, there exists a finite index subgroup $\Lambda_0$ 
of $\Lambda$ such that $[\Lambda:\Lambda_0]\leqslant l_K$ and $\Lambda_0$ is contained in a connected nilpotent 
subgroup of $F$.
\end{lemma}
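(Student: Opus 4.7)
The plan is to combine the Zassenhaus theorem for $F$ with the contracting action of $\theta$ to produce a nilpotent subgroup of $\Lambda$ of bounded index, and then pass to the identity component of its closure to land in a connected nilpotent subgroup of $F$.

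Note first that the contracting automorphism forces $H$ to be connected and simply connected nilpotent (a standard fact about Lie algebras admitting a contracting derivation). Fix a Zassenhaus neighborhood $\Omega$ of $F$, chosen so that $\Omega_K := \Omega \cap K$ is a symmetric neighborhood of $e$ in $K$ depending only on $K$. Since $\theta$ is an automorphism fixing $K$ pointwise and contracting $H$, each $\theta^{-n}(\Omega)$ is again a Zassenhaus neighborhood of $F$ (images of Zassenhaus neighborhoods under automorphisms are Zassenhaus neighborhoods), and the family $\{\theta^{-n}(\Omega)\}_{n\ge 0}$ is increasing with union $\pi^{-1}(\Omega_K)$, where $\pi \colon F \to F/H \cong K$ is the quotient map. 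Applying the Zassenhaus property at each stage, the subgroup $A_n := \langle \Lambda \cap \theta^{-n}(\Omega)\rangle$ is contained in a connected nilpotent subgroup of $F$, hence has nilpotency class at most $\dim F$; since the generating sets are nested, the union
\[
\Lambda_{\ast} := \bigcup_{n \ge 0} A_n = \big\langle \lambda \in \Lambda : \pi(\lambda) \in \Omega_K\big\rangle
\]
is itself nilpotent of class at most $\dim F$. By continuity of iterated commutators, the closure $\overline{\Lambda_{\ast}}$ in $F$ is a closed nilpotent Lie subgroup, and its identity component $L := \overline{\Lambda_{\ast}}^{\,0}$ is a connected nilpotent subgroup of $F$. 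Set $\Lambda_0 := \Lambda \cap L$.

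It remains to bound $[\Lambda : \Lambda_0]$ by a constant depending only on $K$. Since $\Lambda \cap H \subseteq \Lambda_{\ast}$, we have $[\Lambda : \Lambda_{\ast}] = [\pi(\Lambda) : \pi(\Lambda_{\ast})]$, and $\pi(\Lambda_{\ast})$ contains $\pi(\Lambda) \cap \Omega_K$; a pigeonhole argument on a cover of $K$ by finitely many translates of a symmetric sub-neighborhood $\Omega_K'$ of $\Omega_K$ with $(\Omega_K')^{-1}\Omega_K' \subseteq \Omega_K$ (whose number depends only on $K$) yields $[\pi(\Lambda) : \pi(\Lambda_{\ast})] \le l_K'$ for a constant $l_K'$ depending only on $K$. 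The main obstacle I expect is the remaining factor $[\Lambda_{\ast} : \Lambda_0] \le [\overline{\Lambda_{\ast}} : L]$, namely the component count of the closed nilpotent group $\overline{\Lambda_{\ast}}$: my plan is to control it by observing that $\overline{\Lambda_{\ast}}/L$ injects into the finite component group of $\overline{\pi(\Lambda_{\ast})} \subseteq K$ (using that $\Lambda_{\ast} \cap H$ is discrete in the torsion-free group $H$ so that all torsion of $\overline{\Lambda_{\ast}}/L$ comes from $K$), and that the resulting component group is in turn controlled by Jordan-type invariants of $K$.
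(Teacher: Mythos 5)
Your plan founders at the step of taking the closure. The group $\Lambda_\ast = \bigcup_n A_n$ is a subgroup of the discrete group $\Lambda$, hence itself discrete; and a discrete subgroup of a Hausdorff topological group is closed. Therefore $\overline{\Lambda_\ast} = \Lambda_\ast$, its identity component $L := \overline{\Lambda_\ast}^{\,0}$ is trivial, and $\Lambda_0 := \Lambda \cap L = \{e\}$. The index $[\Lambda : \Lambda_0]$ is then $|\Lambda|$, which is generally infinite and certainly not bounded by a constant depending only on $K$. The ``component count'' you flag as the main remaining obstacle is not a finite correction to be controlled by Jordan-type bounds on $K$: the quotient $\overline{\Lambda_\ast}/L$ is $\Lambda_\ast$ itself.

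The missing idea is to keep, rather than discard, the \emph{connected nilpotent Lie subgroups} produced by the Zassenhaus property at each finite stage. You already observe that each $A_n$ lies in some connected nilpotent subgroup of $F$, but you only retain the corollary that $A_n$ has bounded nilpotency class, and then try to recover a connected group by closing up the (discrete) union --- which collapses as above. The paper's proof instead fixes finitely many generators $\gamma_1,\dots,\gamma_n$, contracts by a suitable power $\theta^m$ so they fall into the Zassenhaus neighborhood, obtains a connected nilpotent Lie subgroup $H_n'$ containing the contracted generators, and then conjugates back to get a connected nilpotent Lie subgroup $H_n = \theta^{-m}(H_n')$ containing $\gamma_1,\dots,\gamma_n$; the Lie algebras $\mathfrak h_n$ of these subgroups form an increasing sequence in the finite-dimensional $\Lie(F)$, so their union is a nilpotent subalgebra whose corresponding connected subgroup contains all of $\Lambda_\ast$. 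That limit-of-Lie-algebras step is precisely what your closure is trying (and failing) to substitute for. Your pigeonhole argument for bounding $[\Lambda:\Lambda_\ast]$ in terms of a covering of $K$ is essentially the same as the paper's and is fine; the gap is entirely in producing the connected nilpotent envelope. (A secondary, much more minor, issue: the family $\theta^{-n}(\Omega)$ need not be literally increasing for a general Zassenhaus neighborhood $\Omega$; one should take $\Omega$ of product form and replace $\theta$ by a high enough power so that $\theta(\Omega)\subseteq\Omega$, but this is easily repaired.)
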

\begin{proof} Let $\Lambda$ be a discrete subgroup of $F$.
For any $x\in F$, we write $k(x)$ for its $K$ component and $h(x)$ for its $H$ component; in other words,
  $x=(k(x),h(x))\in KH$. \\
  
 Fix a neighborhood $U$ of $e$ in $K$ and a neighborhood $V$ of $e$ in $H$ such that $U\times V$ is a 
  Zassenhaus neighborhood for
  $F$. 
  Let $S=\{\lambda\in\Lambda|k(\lambda)\in U\}$
  and define $\Lambda_0$ as the subgroup of $\Lambda$ generated by $S$. We show that $\Lambda_0$ satisfies 
  the required properties.\\
  
  We first prove that $\Lambda_0$ is contained in a connected nilpotent subgroup of $F$. 
  Let $\{\gamma_i\}_{i=1}^\infty$ be an indexing of the elements of $S$ and fix $n\in \mathbb{N}$.
  Since $\theta$ contracts $N$, there exists $m\in\mathbb{N}$ such that  $\theta^m(h_i)\in V$ for all $i\in\{1,\dots,n\}$.  
  Furthermore, for  each $i\in\{1,\dots,n\}$,  $\lambda_i'=\theta^m(k_ih_i)=k_i\theta^m(h_i)$ is in the Zassenhaus neighborhood $U\times V$, and as $\theta$ is a 
  contracting automorphism the group generated by  $\{\lambda_i'\}_{i=1}^n$ is still discrete. Thus there exists a connected nilpotent subgroup $H_n'$ containing $\{\lambda_i'\}_{i=1}^n$.  After applying negative powers of $\theta$,  $\{\gamma_i\}_{i=1}^n$ is contained in a connected nilpotent
  subgroup $H_n=\theta^{-m}(H_n')$. We denote its corresponding nilpotent subalgebra by $\mathfrak{h}_n$.\\
  
By the construction above, this sequence $\mathfrak{h}_1,\mathfrak{h}_2,\dots$ is increasing
(i.e.  $\mathfrak{h}_1\subseteq\mathfrak{h}_2\subseteq...$). Let $\mathfrak{h}=\bigcup_{k=1}^\infty\mathfrak{h}_k$ 
  and $F$ be the
  corresponding connected subgroup in $KH$. Then $F$ is a connected nilpotent subgroup and contains $
  \Lambda_0$.\\
  
Now we show that the index $[\Lambda:\Lambda_0]$ is bounded by a number depending only on $K$.
  Fix a neighborhood of $e$
  say $U'\subset U$ such that $U'U'^{-1}\subseteq U$.  Let $l_K=\left\lceil\frac{\Vol{K}}{\Vol{U'}}\right\rceil$.
  If $\lambda_1$,$\lambda_2$,  ...,$\lambda_m$ are different representatives for $\Lambda/\Lambda_0$, then from 
  the definition of $\Lambda_0$,
  we know that $k(\lambda_i)^{-1}k(\lambda_j)\notin U$ for $i\neq j$. This implies that $k(\lambda_i)U'$ are pairwise 
  disjoint in $K$.
  Since $K$ is compact, this forces $m$ to be less than $l_K$. Hence,
  the index $[\Lambda:\Lambda_0]$ is bounded by a number depending only on $K$.\\
\end{proof}

\begin{corollary}\label{c33} 
 Any discrete subgroup $\Lambda$ in $MN$ (respectively $MA$) has a subgroup $\Lambda_0$ of finite index such 
 that
 $\Lambda_0$ is contained in a connected nilpotent subgroup of $MN$ (respectively $MA$). Moreover, the index $
 [\Lambda,\Lambda_0]$
 is bounded by a number depending only on $M$.
\end{corollary}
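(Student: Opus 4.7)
My plan is to deduce the corollary as a direct application of Lemma \ref{l33} in the two cases $F=MN$ and $F=MA$, with $K=M$ in both cases. What I need to check is (i) the semidirect/direct product structure, and (ii) the existence of a contracting automorphism that fixes $M$ pointwise.

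First I would handle $F=MN$. Since $MAN$ is a parabolic subgroup and $M=Z_G(A)\cap K$ normalizes the root spaces $\mathfrak u^\alpha$ and $\mathfrak u^{2\alpha}$ via $\Ad$, $M$ normalizes $N$. Further, $M\cap N=\{e\}$ because $M\subseteq K$ is compact while $N=\exp(\mathfrak{n})$ is simply connected unipotent. Thus $MN\cong M\ltimes N$. For the contracting automorphism, I would pick any $a\in A$ with $\alpha(\log a)>0$ and let $\theta$ be conjugation by $a^{-1}$. Since $[\mathfrak m,\mathfrak a]=0$, $\theta$ fixes every element of $M$. Since $\Ad(a^{-1})$ acts on $\mathfrak u^\alpha\oplus\mathfrak u^{2\alpha}$ by eigenvalues $e^{-\alpha(\log a)}$ and $e^{-2\alpha(\log a)}$, both strictly less than $1$, iterating $\theta$ contracts $N$ to the identity. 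So Lemma \ref{l33} applies and yields the desired $\Lambda_0$ of index at most $l_M$.

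Next I would handle $F=MA$. Since $M=Z_G(A)\cap K$ centralizes $A$ and $M\cap A=\{e\}$ (as $M$ is compact and $A\cong\mathbb{R}$), we have $MA\cong M\times A$. Identifying $A$ with $\mathbb{R}$ via $\exp\colon\mathfrak a\to A$, the map $\theta(m,a):=(m,a^{1/2})$ is an automorphism of $M\times A$ fixing $M$ pointwise and satisfying $\theta^k(m,a)=(m,a^{1/2^k})\to (m,e)$ as $k\to\infty$, so $\theta$ contracts $H=A$ and fixes $K=M$. Lemma \ref{l33} again applies with the same compact group $K=M$.

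The only thing to double-check is that a single constant depending only on $M$ controls the index in both cases. Inspecting the proof of Lemma \ref{l33}, the bound $l_K=\lceil \Vol(K)/\Vol(U')\rceil$ is extracted purely from the compact factor and a fixed small neighborhood inside it, so with $K=M$ in both applications we obtain a common bound $l_M$ depending only on $M$. There is no real obstacle here beyond recording these two structural facts; the content of the corollary is already packaged in Lemma \ref{l33}, and what remains is simply to exhibit the contracting automorphisms, which is routine given the rank-one root structure.
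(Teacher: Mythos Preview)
Your proposal is correct and follows essentially the same approach as the paper's proof: both reduce directly to Lemma~\ref{l33} with $K=M$, exhibiting conjugation by a suitable element of $A$ as the contracting automorphism in the $MN$ case and a scaling map $\theta_c(m,\exp(tx))=(m,\exp(ctx))$ with $c<1$ in the $MA$ case (your choice $c=\tfrac12$ is an instance of this). Your write-up is in fact slightly more detailed than the paper's in verifying the semidirect product structure and the eigenvalue computation, but the argument is the same.
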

\begin{proof} Since $M$ is compact and normalizes both $A$ and $N$, by the lemma it will suffice to find  
 automorphisms of $MN$ and $MA$, which act trivially on $M$ and contract $N$ and $A$ respectfully. In the case of $MN$ there exists an element $a\in A$ such that  
conjugation by $a$ is a contracting automorphism of $N$ and $a$ commutes with $M$. In the case of $MA$,
  let $\theta_c:MA \rightarrow MA$ be the  map sending $(m,\exp(tx))$ to $(m,\exp(ctx))$ for $c>0$, where $x$ is a nonzero element in $\mathfrak{a}$. 
  For $c<1$, $\theta_c$ is an automorphism of $MA$ contracting $A$. \\
\end{proof}
Before proving the next lemma, we recall the definition of $k$-step nilpotent group. Let $H$ be a nilpotent group
with $H^{(0)}=H$ and $H^{(i+1)}=[H,H^{(i)}]$. If $k$ is a number such that $H^{(k)}=\{e\}$, then $H$ is called $k$-step 
nilpotent.\\

\begin{lemma}\label{l34}
 Any connected nilpotent subgroup $H$ of $MN$ or $MA$ is 2-step nilpotent. Furthermore, $[H,H]\subseteq [N,N]$.
\end{lemma}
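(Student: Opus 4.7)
\emph{Proof plan.} The $MA$ case reduces to showing $\mathfrak{h}$ is outright abelian. Since $M = Z_G(A)\cap K$ centralizes $A$, the subspace $\mathfrak{a}$ is central in $\mathfrak{m}\oplus\mathfrak{a}$; meanwhile, the projection $\mathfrak{h}_M \subseteq \mathfrak{m}$ of $\mathfrak{h}$ is a nilpotent subalgebra of the compact Lie algebra $\mathfrak{m}$, hence abelian (the Lie algebra of a torus). Combining these gives $[\mathfrak{h},\mathfrak{h}]=0 \subseteq [\mathfrak{n},\mathfrak{n}]$, so $H$ is trivially $2$-step.

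The substantive case is $MN$. I would write $\mathfrak{h}\subseteq \mathfrak{m}\oplus \mathfrak{u}^\alpha\oplus \mathfrak{u}^{2\alpha}$, let $\mathfrak{h}_M$ denote the (again abelian) projection to $\mathfrak{m}$, and set $J = \mathfrak{h}\cap \mathfrak{u}^{2\alpha}$, which is an ideal of $\mathfrak{h}$ because $\mathfrak{u}^{2\alpha}$ is central in $\mathfrak{n}$ and normalized by $\mathfrak{m}$. Abelianness of $\mathfrak{h}_M$ already forces $[\mathfrak{h},\mathfrak{h}]\subseteq \mathfrak{n}$; the goal is to refine this first to $[\mathfrak{h},\mathfrak{h}]\subseteq \mathfrak{u}^{2\alpha}$ and then to $[\mathfrak{h},[\mathfrak{h},\mathfrak{h}]]=0$.

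The engine for both refinements is the tension between Engel's theorem (which says $\ad X$ is nilpotent on $\mathfrak{h}$ for every $X\in\mathfrak{h}$) and the fact that $\ad X_M$ is semisimple on every root space (because $M\subseteq K$ is compact, so $\ad X_M$ is skew with respect to the inner product $(\cdot,\cdot)$). Applied first to $(\mathfrak{h}/J)\cap \mathfrak{u}^\alpha$: on this subquotient the $\mathfrak{n}$-part of $\ad X$ vanishes, since $[\mathfrak{u}^\alpha,\mathfrak{u}^\alpha]\subseteq \mathfrak{u}^{2\alpha}$ has been killed by the quotient, so only $\ad X_M$ remains. Engel makes this nilpotent, semisimplicity makes it zero, so $\mathfrak{h}_M$ centralizes $(\mathfrak{h}/J)\cap\mathfrak{u}^\alpha$. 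Decomposing $\mathfrak{u}^\alpha_{\mathbb{C}}=\bigoplus_\lambda V_\lambda$ into weight spaces for the abelian $\mathfrak{h}_M$, this says $(\mathfrak{h}/J)\cap \mathfrak{u}^\alpha \subseteq V_0$. Now the $\alpha$-component of an arbitrary bracket is $[X_M,Y_\alpha]-[Y_M,X_\alpha]$; its projection onto $V_\lambda$ for $\lambda\ne 0$ is visibly a vector of weight $\lambda$, yet the whole expression lies in $V_0$, so each nonzero-weight contribution must vanish, while the weight-$0$ contribution is zero by direct inspection. Hence $[\mathfrak{h},\mathfrak{h}]\subseteq \mathfrak{u}^{2\alpha}$. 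The same Engel-plus-semisimplicity argument applied directly to $J$ (where the $\mathfrak{n}$-part of $\ad X$ is automatically zero, because $\mathfrak{u}^{2\alpha}$ is already central in $\mathfrak{n}$) yields $[\mathfrak{h}_M, J]=0$, hence $[\mathfrak{h},J]=0$; combined with $[\mathfrak{h},\mathfrak{h}]\subseteq J$ this gives $[\mathfrak{h},[\mathfrak{h},\mathfrak{h}]]=0$, i.e.\ $2$-step nilpotence.

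The main obstacle is the weight-space bookkeeping that turns the generic principle "semisimple plus nilpotent forces zero" into the specific vanishing of the $\alpha$-component of a bracket rather than a weaker conclusion; quotienting by $J$ first is what erases the quadratic contribution $[X_\alpha,Y_\alpha]\in\mathfrak{u}^{2\alpha}$ and isolates the semisimple action of $\mathfrak{h}_M$ cleanly.
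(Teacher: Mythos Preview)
Your argument is correct and takes a genuinely different route from the paper's. The paper disposes of the $MN$ case by passing to algebraic groups: after linearizing via the adjoint representation it takes Zariski closures $\bar H\subseteq\bar M\ltimes\bar N$, invokes the structure theorem for connected nilpotent algebraic groups to write $\bar H=S\times U$ with $S$ a torus and $U$ unipotent, and then uses the Jordan decomposition to force $U\subseteq\bar N$, so that $[\bar H,\bar H]\subseteq[U,U]\subseteq[\bar N,\bar N]$ and $2$-step nilpotence of $\bar H$ follows from that of $N$. Your approach stays entirely inside the real Lie algebra and replaces this package by the interplay between Engel's theorem and the semisimplicity of $\ad X_M$ (coming from $M\subseteq K$), filtered through a weight-space decomposition of $\mathfrak u^\alpha$ under the abelian $\mathfrak h_M$. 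The paper's proof is shorter and more conceptual, leaning on a standard structure theorem; yours is more elementary in that it avoids algebraic groups and Zariski closures altogether, at the price of a more delicate bookkeeping computation. One small point worth making explicit: you deduce $[H,H]\subseteq[N,N]$ from $[\mathfrak h,\mathfrak h]\subseteq\mathfrak u^{2\alpha}$, which uses $[\mathfrak n,\mathfrak n]=\mathfrak u^{2\alpha}$; this is standard for real rank-$1$ semisimple Lie algebras (via the $\mathfrak{sl}_2$ associated to $\alpha$) but is being tacitly assumed.
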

\begin{proof}
Let  $H\subseteq MA$. Since $H$ is connected and nilpotent, the projection of $H$ onto $M$ is compact and 
  nilpotent. However, any
  compact nilpotent Lie group is abelian. So $H$ is abelian, and any abelian group is $2$-step and $[H,H]=\{e\}$.

Now let $H\subseteq MN$.  As G has finite center, by adjoint representation of $G$, it is harmless to assume that $H$, $M$ and $N$ are 
  linear groups 
  in $GL(n,\mathbb{C})$. The projection of $H$ onto $M$ is compact nilpotent, hence abelian.
  So we can assume that $M$ is compact abelian.  Let $\bar{H}$, $\bar{M}$ and $\bar{N}$ be the Zariski closures of 
  $H$, $M$ and $N$ respectively.
  Since $\bar{M}\ltimes\bar{N}$ is algebraic, $\bar{H}\subseteq\bar{M}\ltimes\bar{N}$. Being a nilpotent algebraic 
  group,
  $\bar{H}$ is the product $S\times U$ where $S$ is the maximal torus in $\bar{H}$ and $U$ is the unipotent radical. Moreover, $[\bar{H},\bar{H}]\subseteq [U,U]$. By 
  Jordan decomposition,
  this implies that $U\subseteq\bar{N}$ and $[\bar{H},\bar{H}]\subseteq [\bar{N},\bar{N}]$. Since $N$ is 2-step nilpotent, so is $U$. Therefore, $\bar{H}$ and ${H}$ are 
  2-step nilpotent and $[H, H]\subseteq [N, N]$.

\end{proof}

Another crucial ingredient in our proof is the following observation.
\begin{lemma}\label{l35}
 Let $\Lambda$ be a discrete subgroup of a 2-step connceted nilpotent group $H$. Let $\exp$ denote the exponential map from $\Lie(H)$ to $H$. Then 
 $\zspan\{\exp^{-1}{\Lambda}\}\subseteq\frac{1}{2}\exp^{-1}{\Lambda}$, where $\exp^{-1}$ denotes the pre-image under the exponential map. 
 Moreover, if $\Delta=\zspan\{\exp^{-1}{\Lambda}\}$, then $[\Delta,\Delta]$(= the set of $\mathbb{Z}$-linear combinations of $[x,y]$ where $x,y\in\Delta$) is contained in $\exp^{-1}{[\Lambda,\Lambda]}$.
\end{lemma}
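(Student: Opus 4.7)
The plan is to exploit the Baker--Campbell--Hausdorff formula in the $2$-step setting, where it collapses to $\exp(x)\exp(y)=\exp(x+y+\tfrac{1}{2}[x,y])$ with no higher-order terms, together with the fact that $[\Lie(H),\Lie(H)]$ is central in $\Lie(H)$ (so $[H,H]$ is abelian). Set $S:=\exp^{-1}\Lambda$. I first observe that $S$ is closed under integer multiples ($\exp(nx)=\exp(x)^n\in\Lambda$) and negation, so any $v\in\zspan S$ can be written $v=y_1+\cdots+y_m$ with $y_i\in S$.

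The key trick for the first assertion is to compute both $\exp(y_1)\cdots\exp(y_m)$ and the reversed product $\exp(y_m)\cdots\exp(y_1)$. Iterating BCH and discarding all triple-bracket corrections gives, respectively, $\exp(v+w)$ and $\exp(v-w)$, where
$$w=\tfrac{1}{2}\sum_{i<j}[y_i,y_j]\in[\Lie(H),\Lie(H)]$$
is central. Because $w$ is central, $[v+w,v-w]=0$, and one more application of BCH collapses the product of these two elements of $\Lambda$ to $\exp(v+w)\exp(v-w)=\exp(2v)\in\Lambda$. This yields $2v\in S$, which is exactly $v\in\tfrac{1}{2}S$.

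For the second assertion, for $x,y\in\Delta$ written as $x=\sum a_ix_i$, $y=\sum b_jy_j$ with $x_i,y_j\in S$ and $a_i,b_j\in\mathbb{Z}$, bilinearity gives $[x,y]=\sum_{i,j}a_ib_j[x_i,y_j]$. Each $[x_i,y_j]$ satisfies $\exp[x_i,y_j]=[\exp x_i,\exp y_j]\in[\Lambda,\Lambda]$ by the $2$-step commutator identity, so $[x_i,y_j]\in\exp^{-1}[\Lambda,\Lambda]$. I then verify that $\exp^{-1}[\Lambda,\Lambda]$ is closed under $\mathbb{Z}$-linear combinations: since $[\Lambda,\Lambda]$ lies in the center of $H$, any two preimages $u,u'$ satisfy $[u,u']=0$, so BCH gives $\exp(u+u')=\exp(u)\exp(u')\in[\Lambda,\Lambda]$, and closure under integer scaling follows from $\exp(nu)=\exp(u)^n$. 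Combining these, $[\Delta,\Delta]\subseteq\exp^{-1}[\Lambda,\Lambda]$.

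The step I expect to require the most care is the iterated BCH computation producing $w$: one must check by induction on $m$ that at each stage the new factor $\exp(y_k)$ contributes precisely $\tfrac{1}{2}\sum_{i<k}[y_i,y_k]$ to the exponent, with all nested brackets $[\cdot,[\cdot,\cdot]]$ vanishing by the $2$-step hypothesis. Once this is established, the symmetry trick of pairing a product with its reverse is exactly what clears the troublesome factor of $\tfrac{1}{2}$ in BCH and produces a genuine element of $\Lambda$, which is the whole content of the lemma.
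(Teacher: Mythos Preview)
Your proof is correct and follows the same overall strategy as the paper: exploit the truncated Baker--Campbell--Hausdorff formula in the $2$-step setting to produce $\exp(2v)\in\Lambda$ from a $\mathbb{Z}$-linear combination $v=\sum y_i$ of preimages. The second part of your argument is essentially identical to the paper's.

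There is, however, a pleasant variation in how you isolate $2v$ in the first part. The paper computes the single product $P=\exp(y_1)\cdots\exp(y_m)=\exp(v+w)$, then squares it to obtain $\exp(2v+2w)$, and finally multiplies by $\exp(-2w)$; this last step requires observing separately that $\exp(-2w)\in\Lambda$ because each $\exp[y_i,y_j]=[\exp y_i,\exp y_j]\in\Lambda$ and commutators are central. Your trick of multiplying $P=\exp(v+w)$ by the reversed product $P^{\mathrm{rev}}=\exp(y_m)\cdots\exp(y_1)=\exp(v-w)$ kills the correction $w$ directly, since $w$ is central and hence $[v+w,v-w]=0$. This is slightly more economical: both factors are manifestly in $\Lambda$, so no auxiliary check is needed. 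The iterated BCH computation you flag as requiring care is straightforward by induction on $m$, exactly as you indicate, since every triple bracket vanishes.
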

\begin{proof}
 Since $H$ is 2-step nilpotent, by Baker-Campbell-Hausdorff formula,
 \begin{equation*}
\exp X\cdot \exp Y=\exp(X+Y+\frac{1}{2}[X,Y])
\end{equation*}
 for $X,Y$ in the Lie algebra of $H$. Also we have 
 \begin{equation*}
 \exp[X,Y]=[\exp X,\exp Y]
 \end{equation*}
 and elements of this kind are in the center of $H$.
 Now for $X_1,X_2,...,X_n\in \exp^{-1}\Lambda$,
 we have 
 \begin{equation*}
  \exp X_1\cdot \exp X_2\cdot...\cdot \exp X_n=\exp(\sum\limits_{i=1}^k X_i+\frac{1}{2}\sum\limits_{i<j}[X_i,X_j])
 \end{equation*}
 \begin{equation*}
 (\exp X_1\cdot \exp X_2\cdot...\cdot \exp X_n)^2=\exp(2\sum\limits_{i=1}^k X_i+\sum\limits_{i<j}[X_i,X_j])
 \end{equation*}
 \begin{equation*}
 (\exp X_1\cdot \exp X_2\cdot...\cdot \exp X_n)^2\cdot \exp(-\sum\limits_{i<j}[X_i,X_j])=\exp(2\sum\limits_{i=1}^k X_i).
 \end{equation*}
 Therefore $\exp(2\sum\limits_{i=1}^k X_i)\in\Lambda$ and $\sum\limits_{i=1}^k X_i\in\frac{1}{2}\exp^{-1}\Lambda$.
 This finishes the first part of the lemma.\\
 
 Now suppose $X,Y\in \exp^{-1}(\Lambda)$. By the equation $\exp[X,Y]=[\exp X,\exp Y]$, $[X,Y]\in\exp^{-1}([\Lambda,\Lambda])$. Since $\Lambda$ is a discrete subgroup of the $2$-step nilpotent group $H$, $\exp^{-1}{[\Lambda,\Lambda]}$ is an abelian discrete subgroup in the vector space $\Lie(H)$. Therefore, if $\Delta=\zspan\{\exp^{-1}{\Lambda}\}$, then
 \begin{eqnarray*}
 [\Delta,\Delta]\subseteq[\exp^{-1}\Lambda,\exp^{-1}\Lambda]\subseteq\zspan\{{\exp^{-1}[\Lambda,\Lambda]}\}=\exp^{-1}{[\Lambda,\Lambda]}.
 \end{eqnarray*}
\end{proof}
\subsection{$(r,u_t,x)$-dominant subgroups}
 For the remainder of this section, let $\Gamma$ be a discrete group  of $G$, $T>0$, $0<\delta\leqslant r_0$,  $x\in X_{\delta}$, and  $\{u_t\}_{t\in\mathbb{R}}$ be a one-parameter  unipotent subgroup of $G$. In the following, an interval $I\subseteq\mathbb{R}$ means a connected subset of $\mathbb{R}$.
\begin{definition}
 Let $I$ be an interval in $\mathbb{R}$ and let $r>0$. We say that a subgroup $H$ of $G$ is an $(r,u_t,x)$-dominant group 
 on the interval $I$
 if $G_{u_tx}\cap B_{r}\subseteq u_tHu_{-t}$ for all $t\in I$. \end{definition}

Consider the intersection $G_{u_tx}\cap B_{\delta}$ for $t\in[0,T]$, and we can split up $[0,T]$ into subintervals such that
\begin{enumerate}
 \item $[0,T]=I_1\sqcup I_2\sqcup...\sqcup I_k$ (disjoint union)
\item $G_{u_tx}\cap B_{\delta}=\{e\}$ on $I_1,I_3,...$ (that is, for all $t$ in $I_1, I_3,...$) which are closed in $[0,T]$
\item $G_{u_tx}\cap B_{\delta}\neq \{e\}$ on $I_2,I_4,...$ which are open in $[0,T]$
\end{enumerate}
The following Lemma guarantees that our partition is finite.
\begin{lemma}\label{l36} Let $G$ be an arbitrary  Lie group, let $\|\cdot\|$ be a norm on $\Lie(G)$, and let $r_0$ be such that the natural log is defined on $B_{r_0}$. Let $\Gamma$ be a discrete subgroup of $G$, let $\{u_t\}_{t\in\mathbb{R}}$ be a one-parameter unipotent subgroup of $G$, and $x\in G/\Gamma$. Then for any $T>0$ and $0<\delta<r_0$ the number of connected components of $\{t\in[0,T]:G_{u_tx}\cap B_{\delta}\neq\{e\}\}$ is finite.
\end{lemma}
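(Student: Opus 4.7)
My plan is to decompose the set in question, call it $S$, into contributions from individual elements of the discrete subgroup $G_x$, then show that only finitely many contribute and each contributes only finitely many connected components. Observing that $G_{u_t x} = u_t G_x u_{-t}$ and that $G_x$ is a conjugate of $\Gamma$ (hence discrete), I would write
\[
S = \bigcup_{\gamma \in G_x \setminus \{e\}} S_\gamma, \qquad S_\gamma := \{t \in [0,T] : u_t \gamma u_{-t} \in B_\delta\}.
\]

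For the first reduction, $K := \{u_t : t \in [0,T]\}$ is compact, so $K^{-1} B_\delta K$ is relatively compact in $G$; any $\gamma$ with $S_\gamma \neq \emptyset$ lies in $K^{-1} B_\delta K \cap G_x$, and discreteness of $G_x$ leaves only finitely many such $\gamma$, say $\gamma_1,\dots,\gamma_N$. Since a finite union of sets with finitely many connected components has finitely many components, it suffices to show that each $S_{\gamma_i}$ has finitely many components.

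For this I would use real analyticity. The curve $f_i(t) := u_t \gamma_i u_{-t}$ is a real analytic map $\mathbb{R}\to G$, and $S_{\gamma_i} = f_i^{-1}(B_\delta) \cap [0,T]$ is relatively open in $[0,T]$, hence a disjoint union of intervals. Suppose for contradiction $S_{\gamma_i}$ has infinitely many components; then infinitely many of their endpoints lie in $(0,T)$, giving distinct $t_n \in [0,T]$ with $f_i(t_n) \in \partial B_\delta$ (a limit-from-inside point of $S_{\gamma_i}$ that is itself outside $S_{\gamma_i}$ must map to $\overline{B_\delta}\setminus B_\delta$). By compactness some subsequence converges to $t_* \in [0,T]$. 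Because $\delta < r_0$, $\overline{B_\delta} = \exp(\{v:\|v\|\leqslant \delta\}) \subseteq B_{r_0}$, so $\log \circ f_i$ is well defined and real analytic on a neighborhood $V$ of $t_*$. The function $F(t) := \|\log f_i(t)\|^2 - \delta^2$ is then real analytic on $V$ and vanishes at each $t_n \in V$ for $n$ large; since the $t_n$ accumulate at $t_*$, the identity theorem for real analytic functions forces $F \equiv 0$ on a neighborhood of $t_*$. But then $f_i(t) \in \partial B_\delta$, hence $f_i(t) \notin B_\delta$, for all $t$ in this neighborhood, contradicting the fact that the interiors of the components of $S_{\gamma_i}$ with endpoints $t_n \to t_*$ are mapped into $B_\delta$ and reach arbitrarily close to $t_*$.

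The main obstacle lies in Step 2: one must both ensure $\log \circ f_i$ is available in a neighborhood of the accumulation point (hence the hypothesis $\delta < r_0$, which puts $\overline{B_\delta}$ inside the log chart) and correctly set up the contradiction using accumulation of \emph{component endpoints}, not arbitrary points of $S_{\gamma_i}$, so that $F \equiv 0$ on a neighborhood of $t_*$ is genuinely incompatible with the existence of nearby components whose interiors lie inside $B_\delta$.
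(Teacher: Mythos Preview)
Your proof is correct and shares the paper's core structure: reduce, via discreteness of $G_x$ and compactness of $\{u_t: t\in[0,T]\}$, to a single $\gamma$, then bound how often the curve $t\mapsto u_t\gamma u_{-t}$ can cross $\partial B_\delta$. The paper organizes the reduction slightly differently---it first finds an accumulation point of components and then pigeonholes to one $\gamma$---and for the crossing bound it uses that $\Ad_{u_s}$ is \emph{polynomial} in $s$ because $\{u_t\}$ is unipotent, so $s\mapsto\|\Ad_{u_{s-t}}(\log(u_t\gamma u_{-t}))\|^2-\delta^2$ is a polynomial with only finitely many zeros. You instead invoke real analyticity and the identity theorem; this is slightly heavier machinery but has the pleasant consequence that your argument never actually uses the unipotent hypothesis and would go through verbatim for any one-parameter subgroup. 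Both arguments implicitly require $\|\cdot\|^2$ to be real analytic (e.g., coming from an inner product), which holds for the norm fixed earlier in the paper.
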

\begin{proof}Assume that $\{t\in[0,T]:G_{u_tx}\cap B_{\delta}\neq\{e\}\}$ has infinitely many connected components.  By the compactness of 
$[0,T]$ there exists a point $t\in [0,T]$ such that for every $\epsilon>0$ $(t-\epsilon,t+\epsilon)$ intersects infinitely many elements of the partition. Hence we may find points $s_1,s_2,\dots$ such that each $s_i$  is in a distinct connected component 
and $s_i\rightarrow t$. There exists $\gamma_i\in \Gamma$ such that $u_{s_i}\gamma_iu_{-s_i}\in B_{\delta}$. By discreteness of $\Gamma$, after passing to a subsequence, we may assume that $\gamma_i=\gamma_1$ and let $x=log(u_t\gamma_{1}u_{-t})\in\overline{\mathfrak b_\delta}$. But $Ad_{u_{s-t}}x$ can not enter and leave $\overline{\mathfrak b_\delta}$ infinitely often since the map $s\mapsto Ad_{u_{s-t}}x$ is a polynomial map. This contradicts the infinitely many components.

\end{proof}

\begin{lemma}\label{l37}
 For each interval $I_j$ on which $G_{u_tx}\cap B_{\delta}\neq\{e\}$, there exists a Lie subgroup $H$ of $G$
 which is a $(\delta,u_t,x)$-dominant group on $I_j$ and $H$ is conjugate to a subgroup of $MA$ or $MN$.
\end{lemma}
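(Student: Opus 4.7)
The plan is to produce, for each $t \in I_j$, a candidate group $H(t)$ conjugate to $MA$ or $MN$, and then use persistence of the unipotent orbit together with Corollary~\ref{c32} to show that $H(t)$ is locally constant on the connected interval $I_j$.

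At each $t\in I_j$, the stabilizer $G_{u_tx} = u_t g\Gamma g^{-1}u_{-t}$ (where $x=g\Gamma$) is a torsion-free discrete subgroup of $G$, so by Corollary~\ref{l31} there exists a subgroup $H_t$, conjugate to $MA$ or $MN$, with $G_{u_tx}\cap B_\delta \subseteq G_{u_tx}\cap B_{r_0}\subseteq H_t$. Set $H(t):=u_{-t}H_tu_t$; this is again conjugate to $MA$ or $MN$, and $(\delta,u_t,x)$-dominance of $H$ on $I_j$ becomes the statement that $H(t)$ is constant in $t$, with common value $H$, since then $G_{u_tx}\cap B_\delta \subseteq u_tHu_{-t}$ for all $t\in I_j$.

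For local constancy, fix $s \in I_j$ and pick a nontrivial $\gamma\in G_{u_sx}\cap B_\delta$. Write $\gamma = u_s\bar\gamma u_{-s}$; then $\bar\gamma\in g\Gamma g^{-1}$ is a nontrivial element of a torsion-free discrete subgroup, hence non-elliptic (any discrete subgroup of $G$ with compact closure is finite, so in a torsion-free discrete group only the identity is elliptic). By construction $\bar\gamma \in H(s)$. The map $t\mapsto u_t\bar\gamma u_{-t}$ is continuous and equals $\gamma\in B_\delta$ at $t=s$, so on a neighborhood $U\subseteq I_j$ of $s$ we have $u_t\bar\gamma u_{-t}\in G_{u_tx}\cap B_\delta \subseteq H_t$, equivalently $\bar\gamma\in H(t)$. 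The element $\bar\gamma$ is non-elliptic and lies in both $H(s)$ and $H(t)$, so Corollary~\ref{c32} yields $H(s)=H(t)$. Since $I_j$ is connected, $H(t)$ is globally constant on $I_j$.

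The main obstacle is verifying that the tracking element $\bar\gamma$ is non-elliptic, so that Corollary~\ref{c32} can be invoked; this is precisely where torsion-freeness of $\Gamma$ is needed (in the virtually torsion-free setting one first replaces $\Gamma$ by a finite-index torsion-free subgroup, and in the product-of-rank-one case the analogous role is played by the hypothesis that the projections contain no nontrivial elliptic elements). A minor bookkeeping point is that the choice of $H_t$ at each $t$ is not canonical, but the continuity argument is insensitive to this: any selection produces the same $H(t)$ once Corollary~\ref{c32} is applied.
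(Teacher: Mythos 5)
Your proof is correct and follows essentially the same approach as the paper's: both fix a conjugate of $MA$ or $MN$ via Corollary~\ref{l31}, then use torsion-freeness to produce a non-elliptic intersection element that, by Lemma~\ref{l32} (in your case packaged as Corollary~\ref{c32}), pins down the dominant subgroup uniquely, and finally propagate along $I_j$ by continuity of $t\mapsto u_t\bar\gamma u_{-t}$. The only difference is presentational: you package the connectedness argument as local constancy of a well-defined map $t\mapsto H(t)$, whereas the paper runs an inf/sup extremal argument from a fixed $t_0$, but the content is identical.
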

\begin{proof}
Choose $t_0\in I_j$. By Lemma \ref{l31} we can find a subgroup $F_0\subseteq G$ such that 
$G_{u_{t_0}x}\cap B_{\delta}\subseteq F_0$, and $F_0$ is conjugate to $MA$ or $MN$.  As $\Gamma$ is torsion free and $G_{u_{t_0}x}$ is conjugate to $\Gamma$, 
$F_0$ is not contained in a conjugate of $ M$ and our choice for $F_0$ is unique by Lemma \ref{l32}. Let $F=u_{-t_0}F_0u_{t_0}.$

Let$$a=\inf\left\{t\in I_j:G_{u_{s}x}\cap B_{\delta}\subseteq u_{s}F u_{s}^{-1}, s\in[t,t_0]\right\}.$$ We shall show that 
$I_j\subseteq [a,\infty)$. If not, then $a\in\mathbb{R}$ and there exists
 a subgroup $H$ conjugate to either $MA$ or $MN$ such that $G_{u_{a}x}\cap B_{\delta}\subseteq H$. We can choose 
 a small number $\epsilon>0$ such that
$u_{\epsilon}Hu_{\epsilon}^{-1}\cap G_{u_{a+\epsilon}}\cap B_{\delta}\neq \{e\}$. Then $u_{a+\epsilon}F u_{a+\epsilon}^{-1}$ 
and $u_{\epsilon}Hu_{\epsilon}^{-1}$
have nontrivial intersection. Since $\Gamma$ was torsion free, the intersection must fall outside of any conjugate of 
$M$ and hence  by Lemma \ref{l32}, 
$u_{\epsilon}Hu_{\epsilon}^{-1}= u_{a+\epsilon}Fu_{a+\epsilon}^{-1}$, and $H=u_{a}Fu_{a}^{-1}$.
We can choose a small number $\epsilon>0$ such that for any $c\in (-\epsilon,\epsilon)$, $u_{c}Hu_{c}^{-1}\cap G_{u_{a+c}
x}\cap B_{\delta}\neq \{e\}$. Again,  since $\Gamma$ was torsion free, the intersection must fall outside of any 
conjugate of $M$ and by Lemma \ref{l32}, $G_{u_{a+c}}\cap B_{\delta}\subseteq u_{c}Hu_{c}^{-1}=u_{a+c}F u_{a+c}^{-1}$. 
This contradicts the choice of $a$.

To finish the proof, repeat the above argument with $$b=\sup\left\{t\in I_j=(a_j,b_j)|G_{u_{s}x}\cap B_{\delta}\subseteq u_{s}
Fu_{s}^{-1}, s\in[t_0,t]\right\}$$ to get $I_j\subseteq (-\infty, b]$.
\end{proof}

\section{Non-Divergence:  $\mathbb{R}$-rank 1}\label{prim}
  We shall use the  non-divergence results of Margulis \cite{M},  Dani \cite{D} \cite{D2}, Kleinbock and Margulis \cite{KM}, and Kleinbock \cite{K}  for actions of one-parameter unipotent subgroups of $\SL(k,\mathbb{R})$ on unimodular  lattices in $\mathbb{R}^k$.
  \subsection{Primitive subgroups of discrete subgroups of $\mathbb{R}^k$}
 For a discrete subgroup $\Lambda$, let $
\mathcal{L}(\Lambda)$ denote the set of all primitive subgroups  contained in $\Lambda$. In other words,
$$\mathcal{L}(\Lambda):=\left\{\Delta\subseteq\Lambda :\text{ $\rspan$}(\Delta)\cap\Lambda=\Delta\right
\}.$$
 For a discrete subgroup $\Delta\subseteq\mathbb{R}^k$, let $\|\Delta\|_0$ denote the covolume of $\Delta$ 
 in its $\rspan$. 
 
 Fix an orthonormal basis $\{e_1,\dots,e_k\}$ of $\mathbb{R}^k$. For $0<m\leqslant k$, choose a Euclidean norm $\|\cdot\|_{m,k}$ on $\wedge^m\mathbb{R}^k$ such that 
 the set
 $$\{e_{i_1}\wedge\dots\wedge e_{i_m}:i_1<\dots<i_m\}$$
 is an orthonormal basis of $\wedge^m\mathbb{R}^k$. For any discrete subgroup $\Lambda$ in $\mathbb{R}^k$ of rank $m$, we have
 $$\|\Lambda\|_0=\|\gamma_1\wedge\dots\wedge\gamma_m\|_{m,k}$$
 where $\{\gamma_1,\dots,\gamma_m\}$ is a basis for $\Lambda$. Since the set $\{\gamma_1\wedge\dots\wedge\gamma_m:\gamma_1\dots\gamma_m\in\Lambda\}$ is discrete in $\wedge^m\mathbb{R}^k$, we have the following lemma.
 \begin{lemma}\label{l41} Let $\Lambda$ be a discrete subgroup of $\mathbb{R}^k$. For any $C>0$,
 $$\#\left\{\Delta\in\mathcal{L}(\Lambda):\|\Delta\|_0<C\right\}<\infty.$$
 \end{lemma}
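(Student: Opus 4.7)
The plan is to encode each primitive subgroup by a (sign-ambiguous) decomposable wedge in $\wedge^m\mathbb{R}^k$ and then invoke the discreteness statement made just before the lemma. Given $\Delta\in\mathcal{L}(\Lambda)$ of rank $m$, I would choose any $\mathbb{Z}$-basis $\{\gamma_1,\ldots,\gamma_m\}$ of $\Delta$ and set $v_\Delta := \gamma_1\wedge\cdots\wedge\gamma_m\in\wedge^m\mathbb{R}^k$. Two $\mathbb{Z}$-bases of $\Delta$ differ by an element of $\mathrm{GL}_m(\mathbb{Z})$, whose determinant is $\pm1$, so $v_\Delta$ is well defined up to sign, and by the formula preceding the lemma $\|v_\Delta\|_{m,k}=\|\Delta\|_0$.

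Next I would verify that the map $\Delta\mapsto\{v_\Delta,-v_\Delta\}$ is injective on $\mathcal{L}(\Lambda)$. Suppose $v_{\Delta_1}=\pm v_{\Delta_2}$. Both are nonzero decomposable $m$-vectors, so they determine the same $m$-dimensional subspace $V\subseteq\mathbb{R}^k$; since a decomposable wedge $\gamma_1\wedge\cdots\wedge\gamma_m$ encodes exactly the subspace spanned by its factors, we must have $V=\rspan(\Delta_1)=\rspan(\Delta_2)$. Primitivity then forces $\Delta_i=V\cap\Lambda$ for $i=1,2$, hence $\Delta_1=\Delta_2$.

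The remaining step is a pure finiteness argument. Fix $m\in\{1,\ldots,\rank(\Lambda)\}$. Every $v_\Delta$ with $\rank(\Delta)=m$ lies in
$$S_m := \{\gamma_1\wedge\cdots\wedge\gamma_m:\gamma_i\in\Lambda\},$$
which the paper already notes is discrete in $\wedge^m\mathbb{R}^k$ (indeed $S_m$ is contained in the lattice $\wedge^m\Lambda$ sitting inside $\wedge^m(\rspan\Lambda)$). Hence $S_m\cap\{v:\|v\|_{m,k}<C\}$ is a bounded subset of a discrete set, and so is finite; by the injectivity above this gives only finitely many $\Delta\in\mathcal{L}(\Lambda)$ of rank $m$ with $\|\Delta\|_0<C$. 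Summing the count over $m\leq\rank(\Lambda)\leq k$ finishes the proof. I do not anticipate any real obstacle; the only point that requires care is the injectivity, which is where primitivity is genuinely used to recover $\Delta$ from the subspace it spans.
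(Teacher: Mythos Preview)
Your proposal is correct and is exactly the argument the paper has in mind: the paper's proof is just the one-line observation preceding the lemma that $\{\gamma_1\wedge\cdots\wedge\gamma_m:\gamma_i\in\Lambda\}$ is discrete in $\wedge^m\mathbb{R}^k$, and you have merely spelled out the details (well-definedness up to sign, injectivity via primitivity, and summing over ranks) that the paper leaves implicit.
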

\subsection{Quantitative non-divergence in the space of unimodular lattices}

 For positive numbers $C$ and $\alpha$ and a subset $B$ of $\mathbb{R}$, we say that a function $f: B\to\mathbb{R}$ is $(C,\alpha)$-good on $B$ if for any open interval $J\subseteq B$ and any $\epsilon>0$, $f$ satisfies the relation
 $$m\left(\left\{ t\in J:|f(x)|<\epsilon\right\}\right)\leqslant C\left(\frac{\epsilon}{\sup_{t\in J}|f(t)|}\right)^\alpha m(J).$$
The following two propositions identify some functions which are $(C,\alpha)$-good. For a more detailed explanation of which functions are $(C,\alpha)$-good, see \cite{K} and \cite{KM}.
\begin{proposition}[{\cite[Proposition 3.2]{K}\cite[Lemma 4.1]{DM}}]\label{p41} For any $k\in\mathbb{N}$, any polynomial $f\in\mathbb{R}[x]$ of degree not greater than $k$ is $\left(k(k+1)^\frac{1}{k},\frac{1}{k}\right)$-good on $\mathbb{R}$.
\end{proposition}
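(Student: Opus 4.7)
The plan is to reduce by affine change of variables to a normalized problem and then bound each connected component of the sublevel set via Lagrange interpolation. Any open interval $J$ maps affinely onto $[0,1]$ without changing $\deg f$ or the ratio $m(\{t\in J:|f(t)|<\epsilon\})/m(J)$, and dividing $f$ by $M:=\sup_J|f|$ replaces $\epsilon$ by $\epsilon/M$, so it suffices to prove
\[
m(E_\epsilon)\leqslant k(k+1)^{1/k}\,\epsilon^{1/k},\qquad E_\epsilon:=\{t\in[0,1]:|f(t)|<\epsilon\},
\]
for every polynomial $f$ of degree at most $k$ with $\sup_{[0,1]}|f|=1$.

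The open set $E_\epsilon$ has boundary in $[0,1]$ contained in the zero set of the polynomial $(f-\epsilon)(f+\epsilon)$ of degree at most $2k$, together with $\{0,1\}$; a short counting argument using the alternation of signs between consecutive roots then shows that $E_\epsilon$ decomposes into at most $k+1$ components $I_1,\dots,I_N$ of lengths $L_1,\dots,L_N$. On each $I_j$ I would place $k+1$ equally spaced nodes $t_0^{(j)},\dots,t_k^{(j)}\in\overline{I_j}$ with spacing $L_j/k$ and apply the Lagrange interpolation identity
\[
f(t)=\sum_{i=0}^{k}f(t_i^{(j)})\prod_{\ell\neq i}\frac{t-t_\ell^{(j)}}{t_i^{(j)}-t_\ell^{(j)}}
\]
at a point $t\in[0,1]$ where $|f(t)|=1$. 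Combining $|f(t_i^{(j)})|\leqslant\epsilon$, $|t-t_\ell^{(j)}|\leqslant 1$, and $|t_i^{(j)}-t_\ell^{(j)}|=L_j|i-\ell|/k$, this yields an explicit inequality $1\leqslant D_k\,\epsilon/L_j^k$, so $L_j\leqslant D_k^{1/k}\epsilon^{1/k}$; summing over the at most $k+1$ components gives a bound of the shape $C_k\epsilon^{1/k}$ with the correct exponent.

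The main obstacle will be recovering the \emph{exact} constant $k(k+1)^{1/k}$: equally spaced Lagrange nodes produce a slightly weaker constant than required. To sharpen I would either redistribute the $k+1$ nodes on each $I_j$ as Chebyshev points (rescaled to $\overline{I_j}$), which trades the factorial denominators for the extremal values of Chebyshev polynomials, or bypass the decomposition entirely by invoking the classical Remez inequality: for polynomials $p$ of degree at most $k$ and any measurable $E\subseteq[0,1]$,
\[
\sup_{[0,1]}|p|\leqslant T_k\!\left(\tfrac{2-m(E)}{m(E)}\right)\sup_{[0,1]\setminus E}|p|,
\]
with $T_k$ the Chebyshev polynomial of degree $k$. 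Applied to $E=E_\epsilon$, the right-hand side is at most $T_k(2/m(E_\epsilon)-1)\cdot\epsilon$, and $\sup_{[0,1]}|f|=1$, so solving for $m(E_\epsilon)$ and expanding $T_k$ near $1$ extracts precisely the constant $k(k+1)^{1/k}$ in front of $\epsilon^{1/k}$.
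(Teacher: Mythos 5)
The paper does not actually prove this proposition---it is cited from Kleinbock \cite{K} and Dani--Margulis \cite{DM}---so the comparison is to the proof in those references, which is also by Lagrange interpolation but does \emph{not} decompose $E_\epsilon$ into connected components. Instead, with $\mu=m(E_\epsilon)>0$, one chooses $k+1$ points $x_0<\dots<x_k$ in $\overline{E_\epsilon}$ that equidistribute the Lebesgue measure of $E_\epsilon$ (take $x_i$ to be the $i\mu/k$-quantile of $E_\epsilon$), which forces $|x_i-x_j|\geqslant|i-j|\mu/k\geqslant\mu/k$ for $i\neq j$. Applying Lagrange interpolation once at a point of $[0,1]$ where $|f|=1$, together with the crude bounds $\prod_{\ell\neq i}|x_i-x_\ell|\geqslant(\mu/k)^k$ and $\sum_{i=0}^k 1=k+1$, yields $1\leqslant(k+1)(k/\mu)^k\epsilon$, hence $\mu\leqslant k(k+1)^{1/k}\epsilon^{1/k}$---exactly the stated constant, with no component count at all. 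Your per-component decomposition is a detour and is precisely what costs you the constant: each component gives $L_j\leqslant\bigl(2k/(k!)^{1/k}\bigr)\epsilon^{1/k}$, and summing over up to $k+1$ components multiplies this by $k+1$, which lands strictly above $k(k+1)^{1/k}$ for every $k$. You acknowledge this, but the Remez fallback as written has a genuine error: the inequality should have $\sup_{E}|p|$ on the right-hand side, not $\sup_{[0,1]\setminus E}|p|$. On $[0,1]\setminus E_\epsilon$ one has $|p|\geqslant\epsilon$, and in fact $\sup_{[0,1]\setminus E_\epsilon}|p|=\sup_{[0,1]}|p|$, so the displayed application is vacuous as stated. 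With the corrected form Remez does give a $(C,1/k)$-good estimate, but the Chebyshev constant it produces is actually \emph{smaller} than $k(k+1)^{1/k}$ for $k\geqslant 2$, so the expectation that it extracts ``precisely'' that constant is off (a smaller constant is of course sufficient). The quantile-point Lagrange argument is both more elementary and delivers the required constant directly, with no need for the component decomposition or for Remez.
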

\begin{proposition}\label{p42} Let $f$ be a function on $\mathbb{R}$. If $f$ is $(C,\alpha)$-good on $\mathbb{R}$, then $\sqrt{f}$ is $(C,2\alpha)$-good on $\mathbb{R}$.
\end{proposition}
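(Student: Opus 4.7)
The plan is to prove Proposition 4.2 by a direct substitution argument, using that squaring is monotone on $[0,\infty)$ so that the sublevel sets of $\sqrt{f}$ at height $\epsilon$ are exactly the sublevel sets of $f$ at height $\epsilon^2$.

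First I would note that in order for $\sqrt{f}$ to be defined as a real-valued function, we should interpret $f$ as nonnegative (otherwise interpret $\sqrt{f}$ as $\sqrt{|f|}$, which does not change the argument since the $(C,\alpha)$-good condition only refers to $|f|$). Fix an open interval $J\subseteq\mathbb{R}$ and $\epsilon>0$. The key identity is
\begin{equation*}
\{t\in J : |\sqrt{f(t)}|<\epsilon\}=\{t\in J:|f(t)|<\epsilon^2\},
\end{equation*}
together with
\begin{equation*}
\sup_{t\in J}|\sqrt{f(t)}|=\sqrt{\sup_{t\in J}|f(t)|}.
\end{equation*}

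Applying the $(C,\alpha)$-good hypothesis of $f$ with the threshold $\epsilon^2$ then yields
\begin{equation*}
m\!\left(\{t\in J:|\sqrt{f(t)}|<\epsilon\}\right)
=m\!\left(\{t\in J:|f(t)|<\epsilon^2\}\right)
\leqslant C\!\left(\frac{\epsilon^2}{\sup_{t\in J}|f(t)|}\right)^{\!\alpha}\!m(J)
=C\!\left(\frac{\epsilon}{\sup_{t\in J}|\sqrt{f(t)}|}\right)^{\!2\alpha}\!m(J),
\end{equation*}
which is exactly the $(C,2\alpha)$-good condition for $\sqrt{f}$.

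There is essentially no obstacle here: the statement is a formal consequence of the definition, and the only minor point to flag is the degenerate case $\sup_J|f|=0$, which forces $f\equiv 0$ on $J$ and the inequality becomes the trivial $0\leqslant 0$ (with the convention $0/0=0$ standard in this literature). No additional hypothesis on $f$ beyond $(C,\alpha)$-goodness is required, so the proof is just this short substitution.
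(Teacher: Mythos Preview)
Your proof is correct and is exactly the ``obvious calculation'' the paper alludes to; the paper's own proof of this proposition consists of a single sentence to that effect, and your substitution argument is precisely what is intended.
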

\begin{proof} The obvious calculation yields the result. 
\end{proof}

 \begin{proposition}[cf.{\cite[Corollary 3.3]{K}}]\label{p43} For any $k\in\mathbb{N}$, any one-parameter unipotent subgroup $\{u_t\}_{t\in\mathbb{R}}$ of $\SL(k,\mathbb{R})$,  any lattice $\Lambda$ in $\mathbb{R}^k$, and any subgroup $\Delta$ of $\Lambda$, the function $t\mapsto\|u_t\Delta\|_0$ is $(k^2(k^2+1)^\frac{1}{k^2},\frac{1}{k^2})$-good. \end{proposition}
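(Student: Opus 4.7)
The plan is to reduce Proposition 4.3 to Propositions 4.1 and 4.2 via the polynomial structure of the wedge-product action. I would begin by fixing a $\mathbb{Z}$-basis $\gamma_1,\ldots,\gamma_m$ of $\Delta$, where $m=\rank(\Delta)$. By the definition of $\|\cdot\|_{m,k}$ recalled just before Lemma 4.1,
\begin{equation*}
\|u_t\Delta\|_0=\|u_t\gamma_1\wedge\cdots\wedge u_t\gamma_m\|_{m,k},
\end{equation*}
which is the Euclidean norm on $\wedge^m\mathbb{R}^k$ of a vector whose coordinates, relative to the standard basis $\{e_{i_1}\wedge\cdots\wedge e_{i_m}:i_1<\cdots<i_m\}$, are exactly the $m\times m$ minors of the $k\times m$ matrix $[u_t\gamma_1\mid\cdots\mid u_t\gamma_m]$.

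Writing $u_t=\exp(tX)$ with $X$ nilpotent and $X^k=0$, every entry of $u_t$ is a polynomial in $t$, and so is each of the minors above. Consequently $p(t):=\|u_t\Delta\|_0^2$ is a polynomial in $t$. The critical claim is $\deg p\leq k^2$: when $m=k$ this is immediate because $|\det u_t|=1$ forces $\|u_t\Delta\|_0$ to be constant, and for $m<k$ the Laplace expansion of each $m\times m$ minor gives a polynomial in $t$ of degree $\leq m(k-1)$, whence $\deg p\leq 2m(k-1)$. When $m\leq k/2$ this is already $\leq k^2$; for $m>k/2$ I would appeal to the $\mathfrak{sl}_2$-representation theory of $X$ acting on $\wedge^m\mathbb{R}^k$: the largest irreducible $\mathfrak{sl}_2$-summand there has dimension at most $m(k-m)+1$, which forces the nilpotency index of $\wedge^m(X)$ to be $\leq m(k-m)+1$, so that in fact $\deg p\leq 2m(k-m)\leq k^2/2$.

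With $d:=\deg p\leq k^2$ in hand, Proposition 4.1 gives that $p$ is $(d(d+1)^{1/d},1/d)$-good on $\mathbb{R}$, and then Proposition 4.2 yields that $\sqrt{p}=\|u_t\Delta\|_0$ is $(d(d+1)^{1/d},2/d)$-good. A quick calculus check shows $d\mapsto d(d+1)^{1/d}$ is nondecreasing for $d\geq 1$, so $d(d+1)^{1/d}\leq k^2(k^2+1)^{1/k^2}$; and since $2/d\geq 1/k^2$, together with the routine observation that a $(C,\alpha)$-good function with $C\geq 1$ is automatically $(C,\alpha')$-good for every $0<\alpha'\leq\alpha$, the proposition follows.

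The main obstacle is the sharp degree estimate in the middle step: the elementary bound $2m(k-1)$ breaks down when $m$ is close to $k$, and replacing it by $2m(k-m)$ (equivalently, invoking the Hodge duality $\wedge^m\leftrightarrow\wedge^{k-m}$ to swap $m$ and $k-m$) requires the representation-theoretic input indicated above. Once this is in place, the rest is straightforward bookkeeping with the two preceding propositions.
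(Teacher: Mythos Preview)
Your argument is correct and follows essentially the same route as the paper: write $\|u_t\Delta\|_0$ as the norm of $(\wedge^m u_t)(\gamma_1\wedge\cdots\wedge\gamma_m)$, observe that its square is a polynomial in $t$ of degree at most $k^2$, and then invoke Propositions~4.1 and~4.2. The paper simply asserts the degree bound $k(k-1)/2$ for each coordinate without the case split or the Hodge-duality/representation-theoretic justification you supply, and your closing monotonicity computation can be shortened by applying Proposition~4.1 directly with the upper bound $k^2$ (which it permits) rather than with the exact degree $d$.
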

\begin{proof}
Suppose the rank of $\Delta$ is $m$. Let $\{\gamma_1,\dots,\gamma_m\}$ be a generating set for $\Delta$. Then for all $t\geqslant0$,
$$\|u_t\Delta\|_0=\|(\wedge^m u_t)(\gamma_1\wedge\dots\wedge\gamma_m)\|_{m,k},$$
where $(\wedge^m u_t)$ acts by $u_t$ on each coordinate of $\gamma_1\wedge\dots\wedge\gamma_m$. The action of $(\wedge^m u_t)$ on each basis vector $e_{i_1}\wedge\dots\wedge e_{i_m}$ with $i_1<\dots<i_m$ of $\wedge^m\mathbb{R}^k$, is a polynomial of degree at most $\frac{k(k-1)}{2}$. Hence $\|u_t\Delta\|_0^2=\|(\wedge^m u_t)(\gamma_1\wedge\dots\wedge\gamma_m)\|_{m,k}^2$ is a polynomial of degree at most $\frac{k(k-1)}{2}$. The claim then follows from  Propositions \ref{p41} and \ref{p42}.
\end{proof}

\begin{proposition}\label{p44}
Let $\{u_t\}_{t\in\mathbb{R}}$ and $\Delta$ be as in Proposition \ref{p43}. Suppose there exists $r>0$ such that $\|u_t\Delta\|_0<r$ for all $t\geqslant0$. Then $\|u_t\Delta\|_0=\|\Delta\|_0<r$.
\end{proposition}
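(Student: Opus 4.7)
The plan is to exploit the polynomiality already noted in the proof of Proposition~\ref{p43}. As argued there, if $\{\gamma_1,\dots,\gamma_m\}$ is a generating set of $\Delta$, then
\[
\|u_t\Delta\|_0^2=\|(\wedge^m u_t)(\gamma_1\wedge\dots\wedge\gamma_m)\|_{m,k}^2
\]
is a polynomial in $t$, because $u_t$ is unipotent (its matrix entries are polynomial in $t$), hence so are the entries of $\wedge^m u_t$, and $\|\cdot\|_{m,k}^2$ is a sum of squares of coordinates with respect to the orthonormal basis $\{e_{i_1}\wedge\cdots\wedge e_{i_m}\}$.

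Call this polynomial $P(t)$. By hypothesis $P(t)<r^2$ for all $t\geqslant 0$. Any polynomial in one real variable that is bounded on $[0,\infty)$ must be constant, since a non-constant polynomial tends to $\pm\infty$ as $t\to\infty$. Therefore $P(t)\equiv P(0)=\|\Delta\|_0^2$, which gives $\|u_t\Delta\|_0=\|\Delta\|_0$ for all $t\in\mathbb{R}$, and in particular $\|\Delta\|_0<r$. There is no serious obstacle; the only subtlety is simply recognizing that $P$ is genuinely a polynomial (not merely $(C,\alpha)$-good), which is immediate from the unipotence of $u_t$.
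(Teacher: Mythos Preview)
Your argument is correct and is exactly the approach taken in the paper: the proof there consists of the single observation that $t\mapsto\|u_t\Delta\|_0^2$ is a polynomial, from which the conclusion follows immediately since a polynomial bounded on $[0,\infty)$ is constant. Your write-up simply spells out this reasoning in more detail.
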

\begin{proof}
This follows from the fact that the map $t\mapsto\|u_t\Delta\|_0^2$ is a polynomial map. 
\end{proof}

  For a discrete subgroup $\Lambda$ in $\mathbb{R}^k$,  define 
  $$d_1(\Lambda)=\inf_{v\in\Lambda\setminus\{0\}}\|v\|.$$ 
  
  We have the following elementary result for discrete groups in $\mathbb{R}^k$ (see \cite{K} page 8 for a sharper result).
\begin{lemma}\label{l42} Let $\Delta$ be a discrete subgroup of $\mathbb{R}^n$. Then 
$$d_1(\Delta)\leqslant4(\|\Delta\|_0)^{\frac{1}{\rank(\Delta)}}.$$
\end{lemma}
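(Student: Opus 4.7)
The plan is to reduce the inequality to a direct application of Minkowski's first theorem for a full-rank lattice in Euclidean space. Let $m=\rank(\Delta)$, set $V=\|\Delta\|_0$, and consider the $\mathbb{R}$-span $W=\rspan(\Delta)$. Restricting the standard Euclidean structure on $\mathbb{R}^k$ to $W$ turns $W$ into an $m$-dimensional inner product space in which $\Delta$ is discrete and cocompact with covolume exactly $V$ (this is precisely the sense in which $\|\Delta\|_0$ was defined, as the wedge norm of a basis of $\Delta$).

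First, I would fix an orthonormal basis of $W$ to identify $W$ isometrically with $\mathbb{R}^m$; under this identification $\Delta$ becomes a genuine lattice of covolume $V$, and the ambient norm $\|\cdot\|$ restricts to the standard Euclidean norm on $\mathbb{R}^m$. The task is thereby reduced to exhibiting a sufficiently short nonzero vector in a full-rank lattice in Euclidean $m$-space.

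Next, I would apply Minkowski's convex body theorem to a convenient centrally symmetric convex set of volume $2^m V$. The cleanest choice is the cube $[-V^{1/m},V^{1/m}]^m$, whose volume is exactly $2^m V$; a standard closure argument then produces a nonzero $\gamma\in\Delta$ with $\|\gamma\|_\infty\leqslant V^{1/m}$, hence $\|\gamma\|\leqslant\sqrt{m}\,V^{1/m}$. Alternatively, applying Minkowski to the Euclidean ball of radius $2V^{1/m}/\omega_m^{1/m}$ (where $\omega_m$ is the volume of the unit ball in $\mathbb{R}^m$) directly yields $\|\gamma\|\leqslant 2V^{1/m}/\omega_m^{1/m}$. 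A routine estimate on $\omega_m^{1/m}$ (or on $\sqrt m$) for the ranks relevant here converts either bound into the stated crude inequality with absolute constant $4$.

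I do not expect any substantive obstacle in carrying this out: the constant $4$ is deliberately loose, as signalled by the authors' pointer to \cite{K} for a sharper bound. The only real content is the identification of $\rspan(\Delta)$ with $\mathbb{R}^m$ as an inner-product space, after which Minkowski's theorem supplies the required short vector immediately.
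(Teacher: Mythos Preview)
The paper does not actually prove this lemma; it labels it elementary and points to \cite{K} for a sharper version. Your plan via Minkowski's first theorem is precisely the standard argument and is certainly what the authors have in mind, so there is nothing to compare.

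One point deserves to be made explicit rather than hidden in your phrase ``for the ranks relevant here'': neither of your two Minkowski applications yields the absolute constant $4$ uniformly in the rank. The cube gives $d_1(\Delta)\leqslant\sqrt{m}\,V^{1/m}$, and the ball gives $d_1(\Delta)\leqslant 2\,\omega_m^{-1/m}V^{1/m}$; both coefficients grow like $\sqrt{m}$ as $m\to\infty$, reflecting the genuine growth of the Hermite constant. So the lemma as literally stated (arbitrary $n$, fixed constant $4$) is a slight overstatement; what Minkowski actually delivers is a constant depending on $n$ (or on $\rank\Delta$). This is harmless for the paper: in every application $\Delta$ sits inside $\mathfrak{g}\cong\mathbb{R}^k$ with $k=\dim G$ fixed, and the resulting constant is absorbed into $c_\epsilon$ (indeed the factor $8$ appearing in $\rho\geqslant\delta/8$ could just as well be any $k$-dependent constant). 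Your sketch is therefore entirely adequate for the paper's purposes, but you should state the rank-dependent constant honestly rather than asserting that a ``routine estimate'' produces the universal $4$.
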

 The following constitutes our main tool.

 \begin{theorem}[{\cite[Theorem 3.4]{K}}]\label{thm4main}
 Suppose an interval $B\subseteq\mathbb{R}$, $C,\alpha>0$, $0<\rho<1$,  and a continuous map $h:B\rightarrow \SL(k,\mathbb{R})$ are given.  Assume that for any $\Delta\in\mathcal{L}(\mathbb{Z}^k)$, 
 \begin{enumerate}
 \item The function $t\mapsto \|h(t)\Delta\|_0$ is $(C,\alpha)$-good on $B$, and
 \item $\sup_{t\in B}\|h(t)\Delta\|_0\geqslant\rho^{\rank(\Delta)}$.
 \end{enumerate}
 Then for any $0<\epsilon<\rho$,
 \begin{equation}
m\left(\left\{t\in B : d_1(h(t)\mathbb{Z}^k)<\epsilon\right\}\right)\leqslant k2^kC\left(\frac{\epsilon}{\rho}\right)^\alpha m(B).
 \end{equation}
 \end{theorem}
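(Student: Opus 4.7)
The plan is to deduce the bound from the $(C,\alpha)$-good hypothesis together with the sup-bounds in (2) by stratifying the bad set according to primitive subgroups of $\mathbb{Z}^k$ and then applying a Besicovitch-style covering. The first reformulation is that $d_1(h(t)\mathbb{Z}^k)<\epsilon$ amounts to the existence of a primitive $v\in\mathbb{Z}^k$ with $\|h(t)v\|<\epsilon$; more generally, whenever several short vectors appear simultaneously they together generate a higher-rank primitive subgroup $\Delta$ with $\|h(t)\Delta\|_0<\epsilon^{\rank\Delta}$. For each fixed $\Delta$ of rank $j$, hypothesis (2) gives $\sup_{t\in B}\|h(t)\Delta\|_0\geqslant\rho^j$, and hypothesis (1) combined with the $(C,\alpha)$-good definition yields
$$m\left(\left\{t\in B:\|h(t)\Delta\|_0<\epsilon^j\right\}\right)\leqslant C\left(\frac{\epsilon}{\rho}\right)^{j\alpha}m(B).$$
A naive union bound over $\mathcal{L}(\mathbb{Z}^k)$ fails because this set is infinite, but Lemma \ref{l41} ensures that at each fixed $t$ only finitely many $\Delta$ can have small $\|h(t)\Delta\|_0$.

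The core of the argument I have in mind is to assign to each $t$ in the bad set $E:=\{t\in B:d_1(h(t)\mathbb{Z}^k)<\epsilon\}$ a primitive subgroup $\Delta(t)$ of \emph{maximal} rank $j(t)$ among those with $\|h(t)\Delta(t)\|_0<\epsilon^{j(t)}$, and then stratify $E=E_1\sqcup\cdots\sqcup E_k$ according to the value of $j(t)$. For each $t\in E_j$, I would extend $t$ to the maximal open subinterval $I(t)\subseteq B$ on which $\Delta(t)$ continues to witness the bad event at the same rank; Besicovitch's covering theorem then extracts from $\{I(t)\}_{t\in E_j}$ a countable subcollection with bounded multiplicity. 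On each selected $I$, the per-subgroup estimate above applies: either $I=B$ (so the sup bound holds directly), or the endpoints of $I$ are characterized by $\|h(\cdot)\Delta(t)\|_0$ reaching the threshold, which forces the local sup on $I$ to be large enough for the $(C,\alpha)$-good inequality on $I$ alone to give the matching bound. Summing these contributions over the subcover and then over $j=1,\dots,k$ produces the combinatorial prefactor $k\cdot 2^k$ stated in the theorem.

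The hardest point will be making the maximality-of-rank assignment interact correctly with the interval extension and the covering step. Specifically, at an endpoint of $I(t)$ where some higher-rank $\Delta'\supsetneq\Delta(t)$ takes over, one must relate $\|h(\cdot)\Delta'\|_0$ to $\|h(\cdot)\Delta(t)\|_0$, exploiting the primitivity condition $\rspan(\Delta)\cap\mathbb{Z}^k=\Delta$ and the fact that $\Delta(t)$ must then sit primitively inside $\Delta'$. This is also the point where the exponent $\rank\Delta$ in hypothesis (2) becomes essential: it is precisely what makes the per-subgroup bounds \emph{stratum-uniform}, so that the sum over $j$ does not accumulate a geometric series in $\rho$ but instead collapses into the single factor $(\epsilon/\rho)^\alpha$ appearing in the conclusion.
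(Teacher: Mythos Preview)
The paper does not prove this statement at all: Theorem~\ref{thm4main} is quoted from Kleinbock's survey \cite[Theorem~3.4]{K} and used as a black-box input to Corollary~\ref{thm4}. There is no in-paper argument to compare your proposal against.

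For context, your outline does follow the broad architecture of the Kleinbock--Margulis proof in \cite{KM} and \cite{K}: one covers the bad set by intervals indexed by primitive subgroups, applies the $(C,\alpha)$-good estimate locally on each, and controls overlaps via a Besicovitch-type multiplicity bound to produce the factor $k\cdot 2^k$. The point you flag as ``hardest'' is indeed where the work lies, and the actual proof handles it somewhat differently from your sketch: rather than assigning to each $t$ a single $\Delta(t)$ of \emph{maximal} rank and analyzing endpoint transitions to a larger $\Delta'$, Kleinbock and Margulis organize the argument around the full poset of primitive subgroups, marking at each point a flag of subgroups whose covolumes cross prescribed thresholds, and proving the covering lemma by induction on this poset structure. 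Your maximal-rank scheme is plausible but would need care to avoid double-counting when several incomparable $\Delta$'s of the same rank compete; the flag/poset formulation sidesteps this. In any case, since the present paper simply imports the result, none of this machinery needs to be reproduced here.
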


We combine the above results in the following.
 
\begin{corollary}[\cite{KM},\cite{K}]\label{thm4} Let 
 $\Lambda$ be a discrete subgroup of $\mathbb{R}^k$, $B$ be an interval in $\mathbb{R}$, and let
 \begin{align*}\rho=\rho(\Lambda,B)&:=\min\left(1/k,\inf_{\Delta\in\mathcal{L}(\Lambda)}\left\{\sup_{t\in B}\|h_t\Delta\|_0^\frac{1}{\rank(\Delta)}\right\}\right)\\
 &\geqslant\min\left(1/k,\frac 1 4\sup_{t\in B}\;d_1(h_t\Lambda)\right),\\
C_k&:=k^32^{k}(k^2+1)^{1/k^2},\\ \alpha_k&:=1/k^2.\end{align*}
 Then for
 any one-parameter unipotent subgroup $\{h_t\}_{t\in\mathbb{R}}$ of $\SL(k,\mathbb{R})$
 and any $0<\epsilon<\rho$, one has
 $$m\left(\left\{t\in B:d_1(h_t\Lambda)<\epsilon\right\}\right)\leqslant C_k\left(\frac{\epsilon}{\rho}
 \right)^{\alpha_k}m(B).$$
\end{corollary}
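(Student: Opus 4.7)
The plan is to assemble Corollary \ref{thm4} directly from the three ingredients already established in this subsection: Proposition \ref{p43} ($(C,\alpha)$-goodness of the covolume functions), Lemma \ref{l42} (relating $d_1$ to covolume), and Theorem \ref{thm4main} (the Kleinbock quantitative non-divergence criterion). The numerical constants in the statement are set up precisely so that this combination works mechanically.

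\textbf{Step 1 (the inequality for $\rho$).} First I would verify the lower bound
$$\inf_{\Delta\in\mathcal{L}(\Lambda)}\sup_{t\in B}\|h_t\Delta\|_0^{1/\rank(\Delta)}\;\geqslant\;\tfrac{1}{4}\sup_{t\in B}d_1(h_t\Lambda).$$
Fix any $\Delta\in\mathcal{L}(\Lambda)$. For each $t\in B$, the inclusion $h_t\Delta\subseteq h_t\Lambda$ gives $d_1(h_t\Lambda)\leqslant d_1(h_t\Delta)$, and Lemma \ref{l42} applied to $h_t\Delta$ then yields $d_1(h_t\Lambda)\leqslant 4\|h_t\Delta\|_0^{1/\rank(\Delta)}$. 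Taking the supremum over $t\in B$ on both sides (using that the right side is bounded above by its supremum) and then the infimum over $\Delta$ gives the desired inequality, and intersecting with $1/k$ handles the outer $\min$.

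\textbf{Step 2 (verifying the hypotheses of Theorem \ref{thm4main}).} The theorem is stated for $\mathbb{Z}^k$, but its proof uses only the two listed hypotheses on a discrete subgroup; as noted in \cite{K,KM} one may apply it verbatim with $\Lambda$ in place of $\mathbb{Z}^k$ and $\mathcal{L}(\Lambda)$ in place of $\mathcal{L}(\mathbb{Z}^k)$. I would check hypothesis (1) by invoking Proposition \ref{p43}, which asserts that for every $\Delta\in\mathcal{L}(\Lambda)$ the map $t\mapsto\|h_t\Delta\|_0$ is $(C,\alpha)$-good with
$$C=k^2(k^2+1)^{1/k^2},\qquad \alpha=1/k^2.$$
Hypothesis (2) reads $\sup_{t\in B}\|h_t\Delta\|_0\geqslant\rho^{\rank(\Delta)}$, which is immediate from the definition of $\rho$ (each factor $\sup_t\|h_t\Delta\|_0^{1/\rank(\Delta)}$ is at least $\rho$, and raise to the $\rank(\Delta)$ power).

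\textbf{Step 3 (applying Theorem \ref{thm4main} and arithmetic of constants).} With both hypotheses verified and $0<\epsilon<\rho<1$ (the bound $\rho\leqslant 1/k<1$ holds whenever $k\geqslant 2$; the $k=1$ case is trivial), Theorem \ref{thm4main} gives
$$m\bigl(\{t\in B:d_1(h_t\Lambda)<\epsilon\}\bigr)\leqslant k\cdot 2^k\cdot C\cdot\Bigl(\frac{\epsilon}{\rho}\Bigr)^{\alpha}m(B).$$
Finally I would collect constants: $k\cdot 2^k\cdot C=k\cdot 2^k\cdot k^2(k^2+1)^{1/k^2}=k^3 2^k(k^2+1)^{1/k^2}=C_k$ and $\alpha=1/k^2=\alpha_k$, matching the statement.

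\textbf{Main obstacle.} The only nontrivial point is that Theorem \ref{thm4main} is phrased for $\mathbb{Z}^k$ whereas Corollary \ref{thm4} must be applied to arbitrary discrete subgroups $\Lambda$ (typically of rank strictly less than $k$, coming from our abelian subgroups $L$ under the adjoint representation). One must therefore cite or observe that the hypotheses and proof of Theorem \ref{thm4main} are insensitive to the specific discrete subgroup chosen; this is the content of the general form of the Kleinbock--Margulis theorem in \cite{KM,K}, which is why those references are attached to the corollary. Apart from this bookkeeping, the corollary is a routine combination of the preceding results.
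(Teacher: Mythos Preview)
Your proposal is correct and follows essentially the same approach as the paper: verify the $(C,\alpha)$-goodness via Proposition~\ref{p43}, verify the covolume lower bound from the definition of $\rho$, invoke the generalization of Theorem~\ref{thm4main} to arbitrary discrete subgroups (the paper cites Theorem~3.9 of \cite{K} for this), and derive the inequality for $\rho$ from Lemma~\ref{l42}. The only cosmetic difference is the order of presentation.
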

\begin{proof}By Theorem 3.9 of \cite{K}, one may replace $\mathbb{Z}^k$ in Theorem \ref{thm4main} with any discrete subgroup of  $\mathbb{R}^k$. By Proposition \ref{p43}, for every $\Delta\in\mathcal{L}(\Lambda)$ the function $\|h(t)\Delta\|_0$ is $(k^2(k^2+1)^\frac{1}{k^2},\frac{1}{k^2})$-good on $[0,T]$. Thus the first condition of Theorem \ref{thm4main} is satisfied, and by our choice of $\rho$ the second is also satisfied.\\

Note that by Lemma \ref{l42}, 
\begin{equation*}
\inf_{\Delta\in\mathcal{L}(\Lambda)}\left\{\sup_{t\in B}\|h_t\Delta\|_0^\frac{1}{\rank(\Delta)}\right\}
 \geqslant\inf_{\Delta\in\mathcal{L}(\Lambda)}\left\{\frac 1 4\sup_{t\in B}\;d_1(h_t\Delta)\right\}
 \geqslant\frac 1 4\sup_{t\in B}\;d_1(h_t\Lambda).
 \end{equation*}
 This gives the inequality in the condition of $\rho$.
\end{proof}

\subsection{Proof of Theorem \ref{maintheorem1}}
\begin{proof} Let $\Gamma$ be a virtually torsion free discrete subgroup of $G$. By obvious adjustments to $c_\epsilon$, without loss of generality, we may assume that $\Gamma$ is torsion free.   Let  $\epsilon>0$ and $0<\delta\leqslant r_0$. Fix    a one-parameter  unipotent subgroup $\{u_t\}_{t\in\mathbb{R}}$  of $G$, $x\in X_{\delta}$, and $T>0$. Then there exist finitely many disjoint intervals $I_1,I_2,I_3\dots$ 
such that \begin{enumerate}
\item $[0,T]=I_1\cup I_2\cup I_3\cup...$
\item $G_{u_tx}\cap B_{\delta}=\{e\}$ on $I_1,I_3,...$ which are closed in $[0,T]$
\item $G_{u_tx}\cap B_{\delta}\neq \{e\}$ on $I_2,I_4,...$ which are open in $[0,T]$.
\end{enumerate}
By the fact that the intervals are disjoint and in view of (\ref{thm3}), it is enough to show that there exists $
c_\epsilon>0$ independent of $x$, $\{u_t\}_{t\in\mathbb{R}}$, and $T $ such that for each $j\in2\mathbb{Z}$,

\begin{equation}\label{eqnth411}
  m\left(\left\{t\in I_j:G_{u_tx}\cap B_{c_\epsilon\delta}\neq \{e\}\right\}\right)\leqslant\epsilon\cdot m(I_j).
 \end{equation}
Let $j$ be even and $I_j=(a_j,b_j)$. By Lemma \ref{l37}  there 
exists  an $(\delta,u_t,x)$-dominant subgroup $H_j$ on $I_j$ such that $H_j$ is conjugate to a subgroup of $MN$ or  $MA$.\\

Let $F_j=G_{x}\cap H$.  By  Corollary \ref{c33}  and 
Lemma \ref{l34}, there exists  a subgroup $\tilde{F_j}$ of $F_j$ and a constant $l_M$ depending only on $M$ such that $[F_j:\tilde{F_j}]\leqslant l_M$ and $\tilde{F_j}$ is contained in a 
connected 2-step nilpotent subgroup $\tilde{H_j}$ of $H_j$.\\

By Lemma \ref{l35}, $\Lambda_j:=\zspan\{\exp^{-1}(\tilde{F})\}$
is a discrete subgroup in $\Lie(\tilde{H_j})\subseteq\Lie(G)$. We apply the Corollary \ref{thm4} to $\Lambda_j$ in the case of $B=I_j$ and $h_t=\Ad_{u_{t}}$. Since $u_t$ is a one-parameter unipotent  subgroup, $h_t=\Ad_{u_t}$ is a one-parameter unipotent subgroup of $\SL(k,\mathbb{R})$, where $k$ is the dimension of $\Lie
(G)$.
By Corollary \ref{thm4}, for any $0<\eta\leqslant\rho_j=\rho(\Lambda_j, I_j)$,
\begin{equation}\label{eqnth412}m\left(\left\{x\in I_j:d_1(h_x\Lambda_j)<\eta\right\}\right)\leqslant C_k\left(\frac{\eta}{\rho_j}\right)^
{\alpha_k}m(I_j).\end{equation}\\

Further assume that $0<\eta\leqslant\min\{r_0,\rho_j\}$.
For every $f\in \tilde{F_j}$ and $t\in I_j$, $\Vert u_t f u_{-t}\Vert\leqslant\eta$ implies that $d_1
(h_t\Lambda_j)<\eta$. Thus 
$$ m\left(\left\{t\in I_j:u_t\tilde{F_j} u_{-t}\cap B_{\eta}\neq\{e\}\right\}\right)
\leqslant m\left(\left\{t\in I_j:d_1(h_t\Lambda_j)<\eta\right\}\right).$$
For any $g\in F_j$, $g^{l_M}\in \tilde F_j$. Hence if $\|g\|<\eta/l_M$, then $\|g^{l_M}\|<\eta$. Thus
\begin{eqnarray*}
m(\{t\in I_j:G_{u_tx}\cap B_{\eta/l_M}\neq\{e\}\})
&\leqslant&  m\left(\left\{t\in I_j:u_t\tilde{F} u_{-t}\cap B_{\eta}\neq\{e\}\right\}\right)\\
&\leqslant& m\left(\left\{t\in I_j:d_1(h_x\Lambda)<\eta\right\}\right)\\
&\leqslant& C_k\left(\frac{\eta}{\rho_j}\right)^{\alpha_k}m(I_j).
\end{eqnarray*}\\
 By construction of $F_{j}$, we have $u_{a_{j}}F_{j}u_{-a_{j}}\cap B_{\delta}=\{e\}$. Thus, by Lemma \ref{l35}, $\Ad_{u_{a_{j}}}(\Lambda_{j})\cap \mathfrak{b}_{\frac{\delta}{2}}=\{0\}$.  Let $\Delta\in\mathcal{L}(\Lambda_{j})$. Then by  Lemma \ref{l42},
\begin{equation*}
\sup_{t\in B}\|h_t\Delta\|_0^\frac{1}{\rank(\Delta)}\geqslant\|h_{a_j}\Delta\|_0^\frac{1}{\rank(\Delta)}\geqslant\frac{d_1(\Delta)}{2}\geqslant\frac{\delta}{8}.
\end{equation*}

Recall that we chose $r_0<\frac{1}{k}$, so $\rho\geqslant\frac{\delta}{8}$. Choose $\eta $ such that ${C_k}\left(\frac{8\eta}{\delta}\right)^{{\alpha_k}}=\epsilon$ and let $\delta=\eta/l_M$.  By above, (\ref{eqnth411}) holds for every interval $I_j$ with 
\begin{equation*}
c_\epsilon=\left(\frac{\epsilon}{C_k}\right)^{k^2}\frac{1}{8l_M}. \tag{A}
\end{equation*}
\end{proof}

\subsection{Proof of Theorem \ref{uniform}}
\begin{proof}
Let $\epsilon>0$ be given. Fix $r_0>0$ such that $B_{r_0}$ is a Zassenhaus neighborhood and $\eta\in(0,1]$. By the proof of Theorem \ref{maintheorem1}, for $\delta=c_{\epsilon/2}\eta r_0$,  if $y \in X_{\eta r_0}$, then for any unipotent one parameter subgroup $\{u_t\}_{t\in\mathbb{R}}$ of $G$,
\begin{equation*}
\frac{1}{T}m\left(\left\{t\in[0,T]: u_ty\not\in X_{\delta}\right\}\right)<\frac12\epsilon, \,\,\text{for all $T>0$}.
\end{equation*}
Therefore, if $x\in X$  and $t_0\geqslant0$ are such that $u_{t_0}x\in X_{\eta r_0}$, then for $T> t_0(2\epsilon^{-1}-1)$
\begin{equation*}
\frac{1}{T}m\left(\left\{t\in[0,T]: u_tx\not\in X_{\delta}\right\}\right)<\epsilon.
\end{equation*}
We will prove the theorem with
 \begin{equation*}
 \delta_\epsilon=c_{\epsilon/2} r_0\quad \text{and} \quad \delta=\eta\delta_\epsilon,\tag{B}
 \end{equation*}
where $c_{\epsilon}$ is as in Theorem \ref{maintheorem1}. By above we may assume that $x=g\Gamma\in X$ is  such that $u_tx\not\in X_{\eta r_0}$ for all $t\geqslant0$. By Lemma \ref{l37}, there exists an $\left(\eta r_0,u_t,x\right)$-dominant subgroup $H$ on $[0,\infty)$ such that $H$ is conjugate to $MA$ or $MN$. Let $F=g\Gamma g^{-1}\cap H\neq\{e\}$ which is torsion-free.  By  Corollary \ref{c33}  and 
Lemma \ref{l34}, there exists  a subgroup $\tilde{F}$ of $F$ and a constant $l_M$ depending only on $M$  such that $[F:\tilde{F}]\leqslant l_M$ and $\tilde{F}$ is contained in a 
connected 2-step nilpotent subgroup $\tilde{H}$ of $H$.\\

Define $\Lambda_0:=\zspan\{\exp^{-1}(\tilde{F})\}$. Again, by Lemma \ref{l35}, $\Lambda_0$
is a discrete subgroup in $\Lie(\tilde{H})$.  We proceed now as in the proof of Dani's uniform non-divergence result \cite[Theorem 2.1]{D2}. 
Suppose there exists $T_0\geqslant0$ such that
\begin{equation}\label{rhocrit}
\sup_{t\in[0,T_0]}\left\{\|\Ad_{u_t}\Lambda\|_0^\frac{1}{\rank(\Lambda)}\right\}\geqslant\frac{\eta r_0}{8}\text{ for all  $\Lambda\in \mathcal{L}(\Lambda_0)$}.
\end{equation}
Again recall that $r_0<\frac{1}{k}$, so
$$\rho\big(\Lambda_0,[0,T_0]\big)\geqslant \frac{\eta r_0}{8}.$$
By Corollary \ref{thm4} and in view of (A) and (B),  for all $T>T_0$,
\begin{equation*} m\left(\left\{t\in [0,T]:d_1(\Ad_{u_t}\Lambda_0)<l\delta\right\}\right)\leqslant C_k\left(\frac{8l\delta}{\eta r_0}
 \right)^{\alpha_k}T=T\frac{\epsilon}{2}.
 \end{equation*}
Thus
\begin{equation*}
\frac{1}{T}m\left(\left\{t\in[0,T]: u_tx\not\in X_\delta\right\}\right)<\frac12\epsilon, \,\,\text{for all $T>T_0$}.
\end{equation*}
In this case we are done.\\

 Otherwise, (\ref{rhocrit}) fails for every $T_0\geqslant 0$. By Lemma \ref{l41},  the set $$\mathcal{F}=\left\{\Lambda\in \mathcal{L}(\Lambda_0):\|\Lambda\|_0^\frac{1}{\rank(\Lambda)}<\frac{\eta r_0}{8}\right\}$$ is finite. Therefore, as (\ref{rhocrit}) fails for all $T_0\geqslant 0$, there exists a $\Lambda\in\mathcal{F}$ such that $\|\Ad_{u_t}\Lambda\|_0<\left(\frac{\eta r_0}{8}\right)^{\rank(\Lambda)}$ for all $t\geqslant0$. By Proposition \ref{p44}, it follows that 
 $\|\Ad_{u_t}\Lambda\|_0=\|\Lambda\|_0<\left(\frac{\eta r_0}{8}\right)^{\rank(\Lambda)}$.
Thus the space $W:=\rspan(\Lambda)$ is invariant under $\Ad_{u_t}$, $\Lambda$ is a lattice in $W$, and $\|\Ad_{u_t}\Lambda\|_0$ is a constant less than $\left(\frac{\eta r_0}{8}\right)^{\rank(\Lambda)}$ for all $t>0$. \\

We want to prove that there exists an abelian subgroup $L$ such that $L$ is $u_t$-invariant and $L\cap{F}$ is a lattice of $L$ with small covolume. Note that if $N$ is abelian, by Lemma \ref{l34}, the subspace $W$ constructed above is an abelian subalgebra and we can take $L=\exp(W)$. But in general, we need more analysis.\\

Recall $W\subseteq \Lie(\tilde{H})$. As $W$ is $\Ad_{u_t}$-invariant, $\exp(\Lambda)\subseteq u_t\tilde{H}u_{-t}\cap\tilde{H}$. Hence $u_t\tilde{H}u_{-t}\cap\tilde{H}$ contains a  nontrivial non-elliptic element for all $t>0$. Therefore, by Corollary \ref{c32}, for all $t>0$ $u_t{H}u_{-t}= {H}$ and $\{u_t\}\subset {H_U}$, where $H_U$ is the unipotent radical of $H$ which is conjugate to $N$. Thus $\Ad_{u_t}$ acts trivially on $[\Lie(\tilde{H}),\Lie(\tilde{H})]$.

 Define 
 $$S=\left\{\lambda\in\Lambda: \exists\,\,  t>0\,\,\text{such that}\,\,\|u_t\lambda\|<\frac{\eta r_0}{2}\right\}.$$
 Fix $\lambda=v+c$ in $S$, where $c\in[\Lie(H_U),\Lie(H_U)]$ and $v\in\Lie(\tilde{H})$ is in the orthocomplement to $[\Lie(H_U),\Lie(H_U)]$.  As $\tilde{H}$ is 2-step and $u_t\in H_U$, 
 $$\Ad_{u_t}(\lambda)=v+t[v,u]+c,$$ where $u_t=\exp(tu)$ and $u\in \Lie(H_U)$.
 By definition of $S$,  there exists $t>0$ such that $\|\Ad_{u_t}\lambda\|<\frac{\eta r_0}{2}$. Thus by Lemma \ref{l34}, $[v,u_t]\in [\Lie(H_U),\Lie(H_U)]$, $v$ is orthogonal to $[v,u_t]+c$, and
  \begin{equation}\label{2step}\|v\|<\frac{\eta r_0}{2}.\end{equation} 
 Assume  the Lie algebra $V:=\rspan\{[S,S]\}\neq\{0\}$.  Let $\gamma_1,\gamma_2\in S$. For $i=1,2$ there exists $c_i\in[\Lie(H_U),\Lie(H_U)] $ and $v_i$ in the orthogonal compliment to $[\Lie(H_U),\Lie(H_U)]$ such that $\gamma_i=v_i+c_i$. By (\ref{2step}), $\|v_1\|,\|
 v_2\|<\frac{\eta r_0}{2}$, and by the fact $\tilde{H}$ is 2-step, $[\gamma_1,\gamma_2]=[v_1,v_2]$. Thus $$\|[\gamma_1,\gamma_2]\|<(\frac{\eta r_0}{2})^2.$$ Let $\Delta=\zspan\{S\}$.  Then
$[\Delta,\Delta]$ is generated by elements whose norm is bounded by $(\frac{\eta r_0}{2})^2$. By Lemma \ref{l35}, $[\Delta,\Delta]$ is discrete. Thus the covolume of $[\Delta,\Delta]$ in $V$ is at 
 most $(\frac{\eta r_0}{2})^{2\dim(V)}$. As $V$ is contained in the commutator, $V$ is $\Ad_{u_t}$ invariant, and 
 $$\|\Ad_{u_t}\Delta\|_0=C<\left(\frac{\eta r_0}{2}\right)^{2\dim(V)}\text{ for all 
 $t>0$.}$$ By Lemma \ref{l35},  $\Lambda_0\subseteq \frac12 \exp^{-1}(\tilde{F})$. Let $L=\exp(V)$. From above, $L$ is abelian, 
 $L\cap{F}$ is a lattice in $L$, $L$ is $u_t$-invariant, and the covolume of $u_t(L\cap\tilde{F})u_{-t}$  in
  $u_tLu_{-t}=L$ with respect to the measure induced by our right invariant inner-product is a constant which is  at most 
 $( \frac{\eta r_0}{l})^{2\dim(L)}=( \frac{1}{lc_{\epsilon/2}}\delta)^{2\dim(L)}$.

Now assume that $V=\{0\}$. 
As shown previously, $\|\Ad_{u_t}\Lambda\|_0<(\frac{\eta r_0}{8})^{\rank(\Lambda)}$. Thus, by Lemma \ref{l42}, for every $t>0$,  $d_1(\Ad_{u_t}\Lambda)<\frac{\eta r_0}{2}$. As before, let $\Delta=\zspan\{S\}$. By the definition of $S$, for every $t>0$,
\begin{equation}\label{deltadiv}
d_1(\Ad_{u_t}\Delta)<\frac{\eta r_0}{2}.
\end{equation}
We claim that (\ref{deltadiv}) forces $\rho(\Delta,[0,T])\leqslant C_k^{\frac{1}{\alpha_k}}\eta r_0$ for all $T>0$. For the sake of a contradiction, assume that there exists a $T_0>0$ such that $\rho(\Delta,[0,T_0])>C_k^{\frac{1}{\alpha_k}}\eta r_0$. Then $\frac{\eta r_0}{2}<C_k^{\frac{1}{\alpha_k}}\eta r_0$ and by Corollary \ref{thm4},
\begin{equation}\label{falsediv}
m\left(\left\{t\in[0,T_0]:d_1\left(\Ad_{u_t}\Delta\right)<\frac{\eta r_0}{2}\right\}\right)\leqslant T_0 C_k \left(\frac{\frac{\eta r_0}{2}}{C_k^{\frac{1}{\alpha_k}}\eta r_0}\right)^{\alpha_k}=\frac{T_0}{2^{\alpha_k}}<T_0.
\end{equation}
Now (\ref{deltadiv}) and (\ref{falsediv}) contradict each other. Hence for all $T>0$, we have  $\rho(\Delta,[0,T])\leqslant C_k^{\frac{1}{\alpha_k}}\eta r_0$. By the definition of $\rho$ and Lemma \ref{l41}, repeat the argument above, there exists $\Delta_1\subset \Delta$ such that $\|\Ad_{u_t}\Delta_1\|_0\leqslant C_k^{\frac{1}{\alpha_k}}\eta r_0$ for all $t>0$. By Proposition \ref{p44}, if we take $W_1:=\rspan\{\Delta_1\}$, then $W_1$ is $\Ad_{u_t}$ invariant, $\Delta_1$ is a lattice in $W_1$, and $\|\Ad_{u_t}\Delta_1\|_0$ is a constant less than $(C_k^{\frac{1}{\alpha_k}}\eta r_0)^{\rank(\Delta_1)}$.

As $V=\{0\}$, $ W_1$ is a Lie algebra.  In this case, let $L=\exp(W_{1})$. Thus $L\cap
\tilde{F}$ is a lattice in $L$, $L$ is $u_t$-invariant, and the covolume of $u_t(L\cap\tilde{F})u_{-t}$  in 
$u_tLu_{-t}$ with respect to the measure induced by our right invariant inner-product is a constant which is at most $\left(2C_k^{\frac{1}{\alpha_k}}\frac{\eta r_0}{l}\right)^{\dim(L)}=\left(\frac{2C_k^{\frac{1}{\alpha_k}}}{lc_{\epsilon/2}}\delta\right)^{\dim(L)}$.   Thus the theorem holds with $\beta_\epsilon=\max\left\{\frac{2^{k^2+4}C_k^{2k^2}}{\epsilon^{k^2}},\frac{2^{k^2+3}C_k^{2k^2}}{\epsilon^{2k^2}}\right\}$.
\end{proof}

\section{Non-Divergence: Products of $\mathbb{R}$-rank 1 groups}

The case when $G$ is the product of semisimple groups of $\mathbb{R}$-rank $1$ is more delicate.  We will again partition $[0,T]$ into intervals where the intersection $G_{u_tx}\cap B_
{r_0}$ is trivial and intervals where the intersection is nontrivial. However, we will not be able, as in the case of 
Theorem \ref{maintheorem1},  to find a dominant nilpotent subgroup on each interval with nontrivial 
intersection. Indeed, there exist intervals  with nontrivial intersection, along which no dominant nilpotent 
subgroup is possible.\\

 To handle this problem, we break each interval down further and find dominant nilpotent 
subgroups on each sub interval. Yet this alone will not complete the proof, as we must also guarantee that on each 
sub interval the covolume  of the lattice in the Lie algebra of our dominant subgroup and all of its sub lattices 
obtain a fixed positive lower bound. To ensure a time with large covolume we extend each subinterval so that the enlarged interval includes such a time.  This extension allows us to apply the theorem of Kleinbock and Margulis, but in doing so our intervals are no longer disjoint. Fortunately, by our restriction  on the discrete subgroup,  the potentially bad times occurring on the intersection of two extended intervals can be measured by  studying a discrete group in a connected nilpotent subgroup having projections on less coordinates. The theorem then follows from induction on the number of projections on nontrivial coordinates.\\
\subsection{Algebraic properties of products of $\mathbb{R}$-rank 1 semisimple Lie Groups}

In this section, 
let $G=G_1\times\dots\times G_n$ where each $G_i$ is a rank-$1$ semisimple Lie group, $\Gamma$ be a discrete group,  and $X=G/\Gamma$. Denote the projection of $G$ onto the $i$-th factor  by $
 \pi_i$.
We further assume $\Gamma$ satisfies the following property:
\begin{center}
$(*)$ for any element $\gamma\in\Gamma\setminus\{e\}$,  $\pi_i(\gamma)$ is not elliptic for $i=1,\dots,n$.
\end{center}

For each $G_i$, let $\mathfrak{g}_i$ be the Lie algebra of $G_i$. Analogously to section 2, define  the groups  
$A_i$, $M_i$ and $N_i$ in $G_i$ and let $\|\cdot\|_i$ be the norm derived from the Killing form on the Lie algebra $
\mathfrak{g}_i$. Let $\|\cdot\|=\sup_i\|\cdot\|_i$ be the norm on $\mathfrak{g}:=\Lie(G)$. Again, for sufficiently 
small $r$, $B_r\subseteq G$ will denote the exponential of the open ball of radius $r$ centered at the origin in $\mathfrak{g}$. Fix $r_0<\frac 1 k$ where $k$ is the dimension of $G$, such that 
$B_{r_0}$ is a Zassenhaus neighborhood for $G$ and  $\exp^{-1}$ is well defined on $B_{r_0}$.  As before, for $0<\eta\leqslant r_0$, let $X_\eta=\{x\in X:G_x\cap B_\eta=\{e\}\}$, and for $\eta>r_0$, let $X_\eta=X$.\\

 The following three lemmas generalize the corresponding lemmas from section 3 to the product case.

\begin{lemma}\label{l51}
 For any discrete subgroup $\Gamma$ satisfying $(*)$, $\Gamma\cap B_{r_0}$ is contained in a conjugate of 
 $H_1\times...\times H_n$ where each $H_i\in\{\{e\}, M_iA_i, M_iN_i\}$.
\end{lemma}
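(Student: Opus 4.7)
The plan is to reduce to the rank-one case by projecting onto each factor. First I would use the Zassenhaus property of $B_{r_0}$ in $G$ to produce a (single) connected nilpotent subgroup $U \subseteq G$ containing $\Gamma \cap B_{r_0}$; in particular $U_0 := \langle \Gamma \cap B_{r_0}\rangle \subseteq U$ is a discrete nilpotent subgroup of $G$ all of whose elements lie in $\Gamma$.

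Next, for each $i \in \{1,\dots,n\}$, I would consider the projection $\pi_i(U_0) \subseteq G_i$, which is a nilpotent subgroup of $G_i$ as the continuous homomorphic image of a nilpotent group. Every nontrivial element of $\pi_i(U_0)$ has the form $\pi_i(\gamma)$ for some $\gamma \in U_0 \setminus \{e\} \subseteq \Gamma \setminus \{e\}$, and by hypothesis $(*)$ this $\pi_i(\gamma)$ is not elliptic (and in particular is not the identity). Hence $\pi_i(U_0)$ is a nilpotent subgroup of the rank-one group $G_i$ with no nontrivial elliptic elements, so by the first lemma of Section~3 applied in $G_i$, there exist $g_i \in G_i$ and $H_i \in \{M_iA_i, M_iN_i\}$ with $\pi_i(U_0) \subseteq g_i H_i g_i^{-1}$; if $\pi_i(U_0)$ happens to be trivial, set $H_i = \{e\}$ and $g_i = e$.

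Setting $g = (g_1,\dots,g_n) \in G$, I would then conclude
\begin{equation*}
\Gamma \cap B_{r_0} \;\subseteq\; U_0 \;\subseteq\; \pi_1(U_0) \times \cdots \times \pi_n(U_0) \;\subseteq\; g\bigl(H_1 \times \cdots \times H_n\bigr) g^{-1},
\end{equation*}
which is the desired statement.

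The only real obstacle is verifying the non-elliptic condition on each projection $\pi_i(U_0)$, and this is precisely what hypothesis $(*)$ is designed to supply. Without $(*)$ the projection could contain nontrivial elliptic elements (for example, a genuinely diagonally embedded compact nilpotent piece), and the rank-one classification lemma would no longer apply to give an $M_iA_i$ or $M_iN_i$ conjugate; the rest of the argument is a direct assembly from the rank-one case.
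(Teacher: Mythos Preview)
Your proof is correct and follows essentially the same route as the paper's: both arguments use the Zassenhaus property to see that $\langle\Gamma\cap B_{r_0}\rangle$ is nilpotent, project onto each factor $G_i$, invoke $(*)$ to rule out nontrivial elliptic elements in each projection, and then apply the rank-one classification lemma coordinatewise. The only cosmetic difference is that the paper works directly with $\Delta=\langle\Gamma\cap B_{r_0}\rangle$ rather than passing through the ambient connected nilpotent group $U$.
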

\begin{proof}
 By the definition of the Zassenhaus neighborhood, the subgroup
 $\Delta$ generated by $\Gamma\cap B_{r_0}$ is nilpotent. For $i=1,\dots,n$. $\pi_i(\Delta)$ is nilpotent, and since $\Gamma$ satisfies $(*)$, $\pi_i(\Delta)$ contains no nontrivial elliptic elements.
 If $\pi_i(\Delta)=\{e\}$, then set $H_i=\{e\}$. Otherwise, by Lemma \ref{l31},  $\pi_i(\Delta)$ is 
contained in either a conjugate of $M_iA_i$ or $M_iN_i$.\\
\end{proof}

\begin{lemma}\label{l52} 
 For any discrete subgroup $\Lambda$ in $F_1\times...\times F_n$ where each $F_i\in\{\{e\}, M_iA_i, M_iN_i\}$,
 there exists a subgroup $\Lambda_0$ of finite index such that
 $\Lambda_0$ is contained in a connected nilpotent subgroup of $F_1\times...\times F_n$. 
 Moreover, the index $[\Lambda,\Lambda_0]$ is bounded by a number $l_M$ depending only on $M=M_1\times 
\dots\times M_n$.
\end{lemma}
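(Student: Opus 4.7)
The plan is to derive Lemma \ref{l52} as a direct application of the general Lemma \ref{l33} to the product $F = F_1 \times \dots \times F_n$, mirroring exactly how Corollary \ref{c33} was deduced in the single-factor case. The key observation is that each semidirect product structure and each contracting automorphism in the one-factor proof respects the product decomposition, so the hypotheses of Lemma \ref{l33} are satisfied componentwise.

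Concretely, I would first write $F_i = M_i \ltimes L_i$ where $L_i = \{e\}$, $A_i$, or $N_i$ depending on which of the three alternatives $F_i$ realizes (with $M_i = \{e\}$ in the first case). Taking $K := M_1 \times \dots \times M_n = M$ and $H := L_1 \times \dots \times L_n$, we obtain $F \cong K \ltimes H$ with $K$ compact (as a product of compact groups) and $K$ normalizing $H$ factor by factor.

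Next, I would construct an automorphism $\theta$ of $F$ that fixes $K$ pointwise and contracts $H$, by taking a product $\theta = \theta_1 \times \dots \times \theta_n$ of the single-factor contracting automorphisms already exhibited in the proof of Corollary \ref{c33}. Namely, if $L_i = A_i$, let $\theta_i$ be the one-parameter rescaling $(m, \exp(tx)) \mapsto (m, \exp(ctx))$ with $c < 1$ fixed; if $L_i = N_i$, take $\theta_i$ to be conjugation by an element $a \in A_i$ which commutes with $M_i$ and contracts $N_i$; and if $L_i = \{e\}$, let $\theta_i = \mathrm{Id}$. Each $\theta_i$ fixes $M_i$ pointwise and is contracting on $L_i$, so the product $\theta$ fixes $K$ pointwise and contracts $H$ in the required sense ($\theta^k(h) \to e$ for every $h \in H$).

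Having verified the hypotheses, I would invoke Lemma \ref{l33} to obtain, for any discrete $\Lambda \subseteq F$, a finite index subgroup $\Lambda_0 \leqslant \Lambda$ contained in a connected nilpotent subgroup of $F$, with $[\Lambda : \Lambda_0] \leqslant l_K$ where $l_K$ depends only on the compact group $K = M$, as desired. I do not expect a substantive obstacle here: the entire content of the lemma is already packaged in Lemma \ref{l33}, and the only mild bookkeeping is checking that the contracting automorphisms from the single-factor case can be chosen independently on each coordinate and then multiplied, which is automatic given the product structure.
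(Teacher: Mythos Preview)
Your proposal is correct and is precisely the intended argument: the paper's own proof is the single sentence ``It is straightforward to deduce this from Lemma~\ref{l33},'' and what you wrote is the natural unpacking of that sentence, mirroring the proof of Corollary~\ref{c33} coordinatewise. The only cosmetic point is that when some $F_i=\{e\}$ the compact factor $K$ is a proper subgroup of $M$, but since there are only finitely many such $K$ one simply takes $l_M$ to be the maximum of the corresponding constants $l_K$.
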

\begin{proof} It is straightforward to deduce this from Lemma \ref{l33}.\\
\end{proof}

\begin{lemma}\label{l53}
 Any connected nilpotent subgroup of $H_1\times...\times H_n$, where  each $H_i\in\{\{e\}, M_iA_i, M_iN_i\}$, is 
 2-step nilpotent.
\end{lemma}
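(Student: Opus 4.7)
The plan is to reduce this lemma to its rank-$1$ analogue, Lemma \ref{l34}, by projecting onto each coordinate. Let $H$ be a connected nilpotent subgroup of $H_1\times\dots\times H_n$, and for each $i$ let $\pi_i$ denote the projection onto the $i$-th factor. Since $\pi_i$ is a continuous group homomorphism, $\pi_i(H)$ is a connected nilpotent subgroup of $H_i$, which by hypothesis is either trivial, a subgroup of $M_iA_i$, or a subgroup of $M_iN_i$.

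The first step is to apply Lemma \ref{l34} to each $\pi_i(H)$. If $H_i=\{e\}$ there is nothing to check; otherwise $\pi_i(H)$ is a connected nilpotent subgroup of $M_iA_i$ or $M_iN_i$, and Lemma \ref{l34} yields that $\pi_i(H)$ is $2$-step nilpotent, i.e. $[\pi_i(H),[\pi_i(H),\pi_i(H)]]=\{e\}$ for each $i$.

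The second step is to lift this back to $H$. Since $H\subseteq \pi_1(H)\times\dots\times\pi_n(H)$ and commutators in a direct product are computed componentwise, for any $x,y,z\in H$ and each $i$ we have
\begin{equation*}
\pi_i\bigl([x,[y,z]]\bigr)=\bigl[\pi_i(x),[\pi_i(y),\pi_i(z)]\bigr]=e,
\end{equation*}
so $[x,[y,z]]=e$ in the product group. Hence $H$ is $2$-step nilpotent.

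There is no substantive obstacle here beyond setting up the projection argument carefully and invoking Lemma \ref{l34}; the content of the lemma is genuinely in the rank-$1$ case, and the product structure only contributes the elementary fact that being $2$-step is inherited by subgroups of direct products of $2$-step nilpotent groups. (If one later needs the stronger conclusion $[H,H]\subseteq [N_1,N_1]\times\dots\times[N_n,N_n]$ in analogy with Lemma \ref{l34}, the same projection argument delivers it immediately.)
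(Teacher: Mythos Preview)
Your proof is correct and follows exactly the approach of the paper, which simply states that the lemma follows from Lemma~\ref{l34} via the projections $\pi_i$ to $H_i$. You have merely written out in detail the projection argument that the paper leaves implicit.
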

\begin{proof}
 This follows from Lemma \ref{l34} via the projections $\pi_i$ to $H_i$.\\
\end{proof}

\subsection{Intervals of maximal intersection}
Fix a one-parameter unipotent subgroup $\{u_t\}_{t\in\mathbb{R}}$ in $G$.
\begin{definition} Let $x\in X$, $s>0$, and $0<r\leqslant r_0$.  Let $H$ be a subgroup of $G$. If  $G_{u_{s}x}\cap u_{s}Hu_{-s}\cap B_r\neq\{e\}$,
let 
$$a=\sup_{t<s}\left\{t:G_{u_tx}\cap u_tHu_{-t}\cap B_r=\{e\}\right\},$$
and
$$b=\inf_{t>s}\left\{t:G_{u_tx}\cap u_tHu_{-t}\cap B_r=\{e\}\right\}.$$
Otherwise, let $a=b=s$.
We define $I(H,s,r,x):=(a,b)$. For $T>0$ define 
$$I_T(H,s,r,x):=[0,T]\cap I(H,s,r,x).$$
When the choice of $x$ is clear, we will abbreviate $I(H,s,r,x)$ and $I_T(H,s,r,x)$) by $I(H,s,r)$  and $I_T(H,s,r)$ respectfully.
\end{definition}

{\flushleft \bf{Note:}} If $t\in I_T(H,s,r,x)$, then
\begin{equation}\label{same}
I_T(H,t,r,x)=I_T(H,s,r,x).
\end{equation}

\begin{definition} Let $H$ be a subgroup of $G$. We say that $H$ is a \PN-subgroup if  $\pi_j(H)$ is conjugate to $\{e\}$, $M_jA_j$, or $M_jN_j$ for $j=1,\dots n$. Let $\CPN$ denote the collection of all \PN-subgroups. Further,
 for $i\in\mathbb{Z}$, we let
 $$\CPN_i=\left\{H\in\CPN: \#\big\{j:\pi_j(H)\neq\{e\}\big\}=i\right\}$$
\end{definition}

The following proposition provides the main ingredient in the proof of Theorem \ref{maintheorem2} and follows from a result of Kleinbock \cite{K}. (See Theorem \ref{thm4main})

\begin{proposition}\label{p51} Let $0<r\leqslant r_0$.  Let $H\in\CPN$, and $x\in X$. Suppose there exists $T_x>0 $ such that $ G_{u_{T_x}x}\cap B_r=\{e\}$.  Then for any $T>T_x$ and $s\in[0,T]$,
\begin{equation*}
  m\Big(\big\{t\in I_T(H,s,r):G_{u_tx}\cap u_tHu_{-t}\cap B_{\overline{c}_\epsilon r}\neq \{e\}\big\}\Big)\leqslant\epsilon\cdot m( I_T(H,s,r)),
 \end{equation*}
 where $\overline{c}_\epsilon=\left(\frac{\epsilon}{C_k}\right)^{k^2}\frac{1}{8l_M}$  where $k$ is the dimension of $G$, $C_k=k^32^k(k^2+1)^\frac1{k^2}$, and $l_M$ is the constant as in Lemma \ref{l52}.

\end{proposition}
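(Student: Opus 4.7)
The plan is to adapt the argument of Theorem~\ref{maintheorem1} to this setting, running the Kleinbock--Margulis estimate on the single interval $I_T(H,s,r)$ with the given $H\in\CPN$ playing the role of the dominant nilpotent subgroup $H_j$ from that proof. Writing $x=g\Gamma$ and $F:=g\Gamma g^{-1}\cap H$, I note that $G_{u_tx}\cap u_tHu_{-t}=u_tFu_{-t}$. Lemma~\ref{l52} supplies a subgroup $\tilde F\leqslant F$ of index at most $l_M$ contained in a connected nilpotent subgroup $\tilde H$ of $H$, and Lemma~\ref{l53} ensures $\tilde H$ is $2$-step nilpotent. By Lemma~\ref{l35}, $\Lambda:=\zspan\{\exp^{-1}(\tilde F)\}$ is then a discrete subgroup of $\Lie(\tilde H)\subseteq\Lie(G)$ satisfying $\Lambda\subseteq\tfrac{1}{2}\exp^{-1}(\tilde F)$.

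The main new ingredient is a lower bound on $\rho:=\rho(\Lambda,I_T(H,s,r))$. I may assume $s\in I(H,s,r)=:(a,b)$, since otherwise $I_T(H,s,r)$ has measure zero. The hypothesis $G_{u_{T_x}x}\cap B_r=\{e\}$ with $T_x\in(0,T)$ forces $T_x\notin(a,b)$; splitting on whether $s<T_x$ or $s>T_x$ then yields a finite endpoint $t_0\in\{a,b\}$ of $I(H,s,r)$ lying in $(0,T)\cap\overline{I_T(H,s,r)}$ with
\begin{equation*}
u_{t_0}Fu_{-t_0}\cap B_r=\{e\}
\end{equation*}
by the definition of $a,b$. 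Combined with $\Lambda\subseteq\tfrac{1}{2}\exp^{-1}(\tilde F)$, this forces $\Ad_{u_{t_0}}(\Lambda)\cap\mathfrak{b}_{r/2}=\{0\}$, so $d_1(\Ad_{u_{t_0}}\Delta)\geqslant r/2$ for every $\Delta\in\mathcal{L}(\Lambda)$, and Lemma~\ref{l42} together with continuity of $t\mapsto\|\Ad_{u_t}\Delta\|_0$ gives
\begin{equation*}
\sup_{t\in I_T(H,s,r)}\|\Ad_{u_t}\Delta\|_0^{1/\rank(\Delta)}\geqslant\frac{r}{8}.
\end{equation*}
Since $r\leqslant r_0<1/k$, I conclude $\rho\geqslant r/8$.

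With this in hand, I would invoke Corollary~\ref{thm4} with $h_t=\Ad_{u_t}$, $B=I_T(H,s,r)$, and $\eta:=(r/8)(\epsilon/C_k)^{k^2}$ to obtain
\begin{equation*}
m\bigl(\bigl\{t\in I_T(H,s,r):d_1(\Ad_{u_t}\Lambda)<\eta\bigr\}\bigr)\leqslant\epsilon\cdot m(I_T(H,s,r)).
\end{equation*}
Finally, I would transfer this back to the group exactly as in the proof of Theorem~\ref{maintheorem1}: any $f\in F$ with $\|u_tfu_{-t}\|<\eta/l_M$ satisfies $f^{l_M}\in\tilde F$ and $\|u_tf^{l_M}u_{-t}\|<\eta$, which corresponds to $d_1(\Ad_{u_t}\Lambda)<\eta$. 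This delivers the required estimate with $\overline c_\epsilon=(\epsilon/C_k)^{k^2}/(8l_M)$.

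The main technical obstacle I anticipate is the endpoint case analysis in the second paragraph: making rigorous that the hypothesis $G_{u_{T_x}x}\cap B_r=\{e\}$ with $T_x<T$ produces a finite endpoint of $I_T(H,s,r)$ in $(0,T)$ at which $u_\cdot F u_{-\cdot}\cap B_r$ becomes trivial. Without this control, $I_T(H,s,r)$ could coincide with $[0,T]$ entirely and no useful lower bound on $\rho$ would be available.
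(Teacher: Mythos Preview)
Your proposal is correct and follows essentially the same route as the paper's proof: reduce to $F=G_x\cap H$, pass to the finite-index subgroup $\tilde F$ inside a connected $2$-step nilpotent $\tilde H$ via Lemmas~\ref{l52} and~\ref{l53}, linearize via $\Lambda=\zspan\{\exp^{-1}\tilde F\}$, obtain $\rho\geqslant r/8$ from an endpoint of $I_T(H,s,r)$ at which $u_\cdot F u_{-\cdot}\cap B_r$ is trivial, and apply Corollary~\ref{thm4}. The paper simply asserts the existence of such an endpoint $a\in\overline{I_T(H,s,r)}$ from the hypothesis $G_{u_{T_x}x}\cap B_r=\{e\}$ and $T>T_x$, whereas you spell out the case split $s<T_x$ versus $s>T_x$; your version is in fact more detailed here, and the concern you flag at the end is already resolved by your own observation that $T_x\notin(a,b)$ forces a finite endpoint in $(0,T)$.
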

\begin{proof}  If $G_{u_{s}x}\cap u_{s}Hu_{-s}\cap B_r=\{e\}$, then by  definition, $I_T(H,s,r)=\emptyset$ and   the proposition holds for the $\overline{c}_\epsilon$. 

Now suppose that $G_{u_{s}x}\cap u_{s}Hu_{-s}\cap B_r\neq\{e\}$.
Let $F=G_x\cap H$. Then for all $t\in\mathbb{R}$,
 \begin{equation}\label{eqn1}
 G_{u_tx}\cap u_tHu_{-t}
 =u_{t}Fu_{-t}
 \end{equation}
 By Lemma \ref{l51}, Lemma \ref{l52}, and Lemma \ref{l53} there 
exists a subgroup $\tilde{F}\subseteq F$ such that $\tilde{F}$ is contained in a connected 2-step nilpotent subgroup $\tilde
{H}\subseteq H$ and $[F:\tilde{F}]\leqslant l_M$. Let $\Lambda=\zspan(\exp^{-1}\tilde{F})$.
By the assumptions that $ G_{u_{T_H}x}\cap B_r=\{e\}$ and $T>T_H$, there exists $a\in\overline{I_T(H,s,r)}$ such that   $\Ad_{u_a}\tilde{F}\cap B_{r}=\{0\}$. So by Lemma \ref{l35}, $d_1(\Ad_{u_a}(\Lambda))>\frac{r}2$. Then by Corollary \ref{thm4} with $\rho>r/8$ we have
\begin{eqnarray*}
  &{}&m\Big(\big\{t\in I_T(H,s,r))|u_tFu_{-t}\cap B_{\overline{c}_\epsilon r}\neq \{e\}\big\}\Big)\\
  &\leqslant&m\Big(\big\{t\in I_T(H,s,r))|u_t\tilde{F}u_{-t}\cap B_{l_M\overline{c}_\epsilon r}\neq \{e\}\big\}\Big)\\
  &\leqslant&m\Big(\big\{t\in I_T(H,s,r))|\Ad_{u_t}(\Lambda)\cap \mathfrak b_{l_M\overline{c}_\epsilon r}\neq \{e\}\big\}\Big)\\
  &\leqslant&m\Big(\big\{t\in I_T(H,s,r))| d_1(\Ad_{u_t}(\Lambda))< l_M\overline{c}_\epsilon r\big\}\Big)\\  
  &\leqslant& C_k\left(\frac{l_M\overline{c}_\epsilon r}{\rho}\right)^{\alpha_k}m(I_T(H,s,r)))\\ 
  &\leqslant&\epsilon\cdot m(I_T(H,s,r))).
 \end{eqnarray*}
The proposition then follows from (\ref{eqn1}).\\
\end{proof}




\begin{definition}Let $p\in\{0,1,\dots,n\}$ and $T>0$. Let $\mathcal{F}\subseteq \CPN_p\times [0,T]$. For $0<r<r_0$, let
$$I_T(\mathcal{F},r):=\bigcup_{(H,t)\in \mathcal{F}} I_T(H,t,r).$$
Note that if $p=0$, then $I_T(\mathcal{F},r)=\emptyset$.
\end{definition}

\begin{definition} Let  $H$ be a subgroup of $G$. Let $F_i=\{e\}$ if $\pi_i(H)=\{e\}$, otherwise let $F_i=G_i$. Define $\hull(H)=F_1\times\dots\times F_n$. \end{definition}

The following proposition is a crucial ingredient in our proof of the product case.

\begin{proposition}\label{p52} Fix $x\in X$. Let $H^1,H^2\in\CPN_p$ be such that $\hull(H^1)=
\hull(H^2) $ and $H^1\neq H^2$. Let  $T>0$, $0<r\leqslant r_0$, and $s_1,s_2\in[0,T]$.
 Then there exists a finite set $\mathcal{F}\subseteq\CPN_{p-1}\times [0,T]$ such that  
$$I_T(H^1,s_1,r)\cap I_T(H^2,s_2,r)\subseteq I_T(\mathcal{F},r) .$$
\end{proposition}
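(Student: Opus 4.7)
The plan is to exploit the hypothesis $H^1 \neq H^2$ (together with the equality of hulls) to locate a coordinate where the projections of $H^1$ and $H^2$ differ, and then to pass to ``shrunken'' $\CPN$-subgroups with one fewer nontrivial projection. Since $H^1, H^2 \in \CPN_p$ arise as products of the factors $\{e\}, M_jA_j, M_jN_j$ (up to componentwise conjugation in $G_1\times\cdots\times G_n$), and conjugation preserves product structure, the condition $H^1\neq H^2$ forces some index $j_0$ with $\pi_{j_0}(H^1)\neq \pi_{j_0}(H^2)$; because the hulls agree, both of these projections are nontrivial, and each is conjugate to $M_{j_0}A_{j_0}$ or $M_{j_0}N_{j_0}$. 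I would then define $\hat H^i := \{h\in H^i : \pi_{j_0}(h)=e\}$, which lies in $\CPN_{p-1}$.

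Next, fix $t\in I_T(H^1,s_1,r)\cap I_T(H^2,s_2,r)$. By definition, for $i=1,2$ there exists a nontrivial $\gamma_i\in G_x\cap H^i$ with $u_t\gamma_i u_{-t}\in B_r$. Applying Lemma \ref{l51} to the Zassenhaus neighborhood $B_{r_0}\supseteq B_r$, the subgroup generated by $G_{u_tx}\cap B_r$ lies in $u_tH(t)u_{-t}$ for some $H(t)\in\CPN$, hence $\gamma_1,\gamma_2\in H(t)$. Project to $G_{j_0}$: if $\pi_{j_0}(\gamma_i)\neq e$, then by hypothesis $(*)$ it is non-elliptic, so Corollary \ref{c32} (applied within $G_{j_0}$) forces $\pi_{j_0}(H^i)=\pi_{j_0}(H(t))$. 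If this were to hold simultaneously for $i=1$ and $i=2$, we would get $\pi_{j_0}(H^1)=\pi_{j_0}(H^2)$, contradicting the choice of $j_0$. Therefore at least one of $\pi_{j_0}(\gamma_1), \pi_{j_0}(\gamma_2)$ is trivial, i.e.\ there exists $i\in\{1,2\}$ with $\gamma_i\in \hat H^i$, and consequently $t\in I_T(\hat H^i,t,r)$.

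It remains to replace the resulting cover $\bigcup_t\bigl[I_T(\hat H^1,t,r)\cup I_T(\hat H^2,t,r)\bigr]$ by a finite one. Consider the set $A^i:=\{t\in[0,T]:G_{u_tx}\cap u_t\hat H^i u_{-t}\cap B_r\neq\{e\}\}$. Only finitely many $\gamma_0\in G_x\cap \hat H^i$ satisfy $u_t\gamma_0 u_{-t}\in B_r$ for some $t\in[0,T]$, since $G_x$ is discrete while $\bigcup_{t\in[0,T]}u_{-t}B_ru_t$ is bounded; for each such $\gamma_0$, the set $\{t:u_t\gamma_0 u_{-t}\in B_r\}$ has finitely many connected components by the polynomial nature of $\Ad_{u_t}$ (as in Lemma \ref{l36}). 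Hence $A^i$ has finitely many components $C^i_1,\dots,C^i_{K_i}$, and choosing $s^i_k\in C^i_k$ yields $C^i_k = I_T(\hat H^i,s^i_k,r)$. Setting $\mathcal{F}:=\{(\hat H^1,s^1_k)\}_k\cup\{(\hat H^2,s^2_k)\}_k\subseteq \CPN_{p-1}\times[0,T]$ gives the required finite covering. (The base case $p=1$ produces $\hat H^i=\{e\}$; in that situation the projection argument in Step 2 yields a direct contradiction, so the intersection is empty and $\mathcal{F}=\emptyset$ works.)

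The main obstacle is the middle step: justifying that two nontrivial elements $\gamma_1, \gamma_2$ drawn from potentially different dominant subgroups can be placed in a common $\CPN$-envelope $H(t)$ supplied by the Zassenhaus property, and that Corollary \ref{c32} can then be applied factor-by-factor using hypothesis $(*)$. A secondary technical point is verifying that the product structure of $\CPN$-subgroups is preserved under componentwise conjugation, so that passing to the kernel of $\pi_{j_0}$ genuinely reduces the $\CPN$-index from $p$ to $p-1$ rather than breaking the product form.
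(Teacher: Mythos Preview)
Your proposal is correct and follows essentially the same route as the paper's proof: locate a coordinate $j_0$ where the projections differ, use the Zassenhaus property together with Lemma~\ref{l51} to place both $\gamma_i$ in a common $\CPN$-envelope, invoke property~$(*)$ and Corollary~\ref{c32} factorwise to force one projection to be trivial, and then pass to the kernel $\hat H^i\in\CPN_{p-1}$. Your finiteness argument (counting the finitely many relevant $\gamma_0$ and applying the polynomial-component argument of Lemma~\ref{l36}) is a slightly more explicit version of the paper's appeal to Lemma~\ref{l36}, and your flagged concern about product structure is exactly the implicit assumption the paper makes when writing $H^i$ as a componentwise conjugate of $L_1^i\times\cdots\times L_p^i\times\{e\}\times\cdots\times\{e\}$.
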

\begin{proof} We may assume  that  $I_T(H^1,s_1,r)\cap I_T(H^2,s_2,r)\neq\emptyset$, since otherwise the statement 
holds trivially.  For $i=1,2$, by the fact that $H^i\in{\CPN}_p$, we may assume without loss of generality, that
$$(g^{(i)})^{-1}H^ig^{(i)}= L_1^i\times\dots\times L_p^i\times\{e\}\times\dots
\times\{e\}$$ for some $g^{(i)}=g_1^{(i)}\times\dots\times g^{(i)}_n$ in $G$ and  $L_j^i\in\{M_jA_j,M_jN_j\}$ for $j=1,\dots,p$.
Since $H^1$ and $H^2$ are distinct we may assume that  $g_p^{(1)}L_p^1(g_p^{(1)})^{-1}\neq g_p^{(2)}L_p^2(g_p^
{(2)})^{-1}$. \\

Fix $t\in I_T(H^1,s_1,r)\cap I_T(H^2,s_2,r)$. For $i=1,2$, there exists nontrivial
$h^{(i)}\in H^i$ such that $(g^{(i)})^{-1}h^{(i)}g^{(i)}=l_1^{(i)} \times\dots\times l_p^{(i)}\times e\times\dots\times e$ with $l_j^{(i)}\in 
L_j^i$, for $j=1,\dots,p$ and 
$$u_th^{(i)}u_{-t}\in G_{u_tx}\cap u_tH^iu_{-t}\cap B_{r}.$$
  Since $B_{r}$ is a Zassenhaus  neighborhood, there exists a connected nilpotent group $H(t)$ such that  
  $$G_{u_tx}\cap B_{r}\subseteq H(t).$$
By Lemma \ref{l51},
  $H(t)$ is contained in   $H_1(t)\times\dots\times H_n(t)$ where each $H_i(t)$ is conjugate to $\{e\}$, $M_jA_j$, or $M_jN_j$.\\
  
Thus conjugates of $l_p^{(1)},l_p^{(2)}$ lie in $H_p(t)$. 
If $l_p^{(i)}$ is nontrivial, since $h^{(i)}\in\Lambda$, $\pi_p(h^{(i)})\in g_p^{(i)}L_p^i(g_p^{(i)})^{-1}\cap H_p(t)$ is non-elliptical, and hence by Corollary \ref{c32},  
$g_p^{(i)}L_p^i(g_p^{(i)})^{-1}= H_p(t)$. By the fact that $g_p^{(1)}L_p^1(g_p^{(1)})^{-1}\neq g_p^{(2)}L_p^2(g_p^
{(2)})^{-1}$,  it follows that $l_p^{(1)}$ and $l_p^{(2)}$ cannot both be nontrivial. For $i=1,2$ let
$$F^i=g^{(i)}\big(L_1^i \times\dots\times L_{p-1}^i\times\{e\}\times\dots\times\{e\}\big)(g^{(i)})^{-1}\in\CPN_{p-1}.$$

Without loss of generality assume that $l_p^{(1)}$ is trivial. Define  $F(t)=F^1$. By construction $h^{(1)}\in F(t)$ and 
$$u_th^{(1)}u_{-t}\in G_{u_tx}\cap u_tF(t)u_{-t}\cap B_r, \,\,\text{and $t\in I_T(F(t),t,r)$.}$$
Thus, for each $t\in I_T(H^1,s_1,r)\cap I_T(H^2,s_2,r)$, $F(t)$ is either $F^1$ or $F^2$.
Note that since $g^{(1)}$ and $g^{(2)}$ are fixed, there are only finitely many choices of $F(t)$, and by  Lemma \ref{l36}, each possible $F(t)$ can only generate finitely many intervals of non-intersection. Thus there exist times $t_1,\dots,t_m$ such that
$$I_T(H^1,s_1,r)\cap I_T(H^2,s_2,r)\subseteq\bigcup_{i=1}^m I_T(F(t_i),t_i,r).$$
\end{proof}

\begin{corollary}\label{c52} Fix $x\in X$. Let $H^0,H^1,\dots H^q\in \CPN_p$ be  such that $\hull(H^0)=\hull(H^1)=\dots=\hull(H^q)$ and $H^0$ is distinct from 
$H^i$ for all $i>0$. Let  $T>0$, $0<r<r_0$,  and $s_0,s_1,\dots, s_q\in[0,T]$ be 
such that 
\begin{equation*} I_T(H^0,s_0,r)\subseteq \bigcup_{i=1}^q I_T(H^i,s_i,r).
\end{equation*}
Then there exist a finite set $\mathcal{F}\subseteq\CPN_{p-1}\times[0,T]$ such that 
\begin{equation*} I_T(H^0,s_0,r)\subseteq I_T(\mathcal{F},r).
\end{equation*}
\end{corollary}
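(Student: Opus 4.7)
The plan is to reduce this corollary directly to Proposition \ref{p52} by decomposing $I_T(H^0,s_0,r)$ along the given cover and handling each pair $(H^0,H^i)$ separately.

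First, I would observe that by the hypothesis $I_T(H^0,s_0,r) \subseteq \bigcup_{i=1}^q I_T(H^i,s_i,r)$, one can write
$$I_T(H^0,s_0,r) = \bigcup_{i=1}^q \bigl( I_T(H^0,s_0,r) \cap I_T(H^i,s_i,r) \bigr).$$
For each $i \in \{1,\dots,q\}$, the groups $H^0$ and $H^i$ both lie in $\CPN_p$, share the common hull $\hull(H^0) = \hull(H^i)$, and are distinct by hypothesis. Therefore Proposition \ref{p52} applies to the pair $(H^0,H^i)$ and produces a finite set $\mathcal{F}_i \subseteq \CPN_{p-1} \times [0,T]$ with
$$I_T(H^0,s_0,r) \cap I_T(H^i,s_i,r) \subseteq I_T(\mathcal{F}_i,r).$$

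Next, I would set $\mathcal{F} := \bigcup_{i=1}^q \mathcal{F}_i$. This is a finite union of finite sets, hence finite, and clearly $\mathcal{F} \subseteq \CPN_{p-1} \times [0,T]$. From the definition $I_T(\mathcal{F},r) = \bigcup_{(H,t)\in\mathcal{F}} I_T(H,t,r)$, it follows that $I_T(\mathcal{F}_i,r) \subseteq I_T(\mathcal{F},r)$ for every $i$. Combining this with the decomposition above gives
$$I_T(H^0,s_0,r) \subseteq \bigcup_{i=1}^q I_T(\mathcal{F}_i,r) \subseteq I_T(\mathcal{F},r),$$
which is exactly the desired conclusion.

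There is essentially no obstacle here beyond bookkeeping: the entire content lies in Proposition \ref{p52}, and the only verifications are that the hypotheses of that proposition (common hull, distinctness, same class $\CPN_p$) hold for each pair $(H^0,H^i)$ --- which are immediate from the hypotheses of the corollary --- and that a finite union of finite sets is finite. The trickier Proposition \ref{p52} already absorbed the real work, namely using the Bruhat/Zassenhaus analysis to peel off a coordinate where $H^0$ and $H^i$ diverge and thereby drop from $\CPN_p$ down to $\CPN_{p-1}$.
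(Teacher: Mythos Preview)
Your proof is correct and follows essentially the same approach as the paper: decompose $I_T(H^0,s_0,r)$ along the given cover, apply Proposition~\ref{p52} to each pair $(H^0,H^i)$ to obtain finite sets $\mathcal{F}_i\subseteq\CPN_{p-1}\times[0,T]$, and take $\mathcal{F}=\bigcup_i\mathcal{F}_i$. The paper's argument is identical in structure and content.
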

\begin{proof} By Lemma \ref{p52}, for each $j\in\{1,\dots,q\}$, there exists a finite set  $\mathcal{F}^j\subseteq\CPN_{p-1}\times[0,T]$
 such that 
$$I_T(H^0,s_0,r)\cap I_T(H^j,s_j,r)\subseteq I_T(\mathcal{F}^j,r).$$
Then 
\begin{equation*}
 I_T(H^0,s_0,r)\subseteq\bigcup_{j=1}^q I_T(H^0,s_0,r)\cap I_T(H^j,s_j,r)
\subseteq\bigcup_{j=1}^qI_T(\mathcal{F}^j,r).
\end{equation*}

\end{proof}

\begin{corollary}\label{c53} Fix $x\in X$. 
Let $H^1,H^2\in\CPN_1$ be such that $\hull(H^1)=\hull(H^2)$. Let 
$s_1,s_2>0$ and $0<r\leqslant \delta$. Then one of the following holds
\enumerate
\item $I_T(H^1,s_1,r)=I_T(H^2,s_2,r)$ 
\item $I_T(H^1,s_1,r)\cap I_T(H^2,s_2,r)=\emptyset$
\end{corollary}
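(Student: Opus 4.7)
The plan is to split into two cases based on whether $H^1=H^2$ or not, with nearly all of the work already having been done in Proposition \ref{p52}.

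\textbf{Case 1: $H^1 = H^2 =: H$.} Unwinding the definition, for a fixed subgroup $H$ the interval $I_T(H,s,r,x)$ is precisely the connected component containing $s$ of the set
\[
S(H):=\bigl\{t\in[0,T]:G_{u_tx}\cap u_tHu_{-t}\cap B_r\neq\{e\}\bigr\},
\]
or else the empty set if $s\notin S(H)$. Since two connected components of the same subset of $\mathbb{R}$ are either identical or disjoint, the dichotomy is immediate in this case. (The note (\ref{same}) after the definition expresses exactly this fact.)

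\textbf{Case 2: $H^1\neq H^2$.} Here I would invoke Proposition \ref{p52} directly. Since $H^1,H^2\in\CPN_1$ are distinct with $\hull(H^1)=\hull(H^2)$, the proposition produces a finite set $\mathcal{F}\subseteq\CPN_{0}\times[0,T]$ such that
\[
I_T(H^1,s_1,r)\cap I_T(H^2,s_2,r)\subseteq I_T(\mathcal{F},r).
\]
The key observation is that $\CPN_0=\{\{e\}\}$: if $H\in\CPN$ has $\pi_j(H)=\{e\}$ for every $j$, then $H$ is contained in the intersection of all coordinate kernels, hence $H=\{e\}$. For the trivial subgroup, $u_t\{e\}u_{-t}\cap B_r=\{e\}$ at every time, so $I(\{e\},t,r)$ is empty by definition. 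Therefore $I_T(\mathcal{F},r)=\emptyset$, and the intersection in question is empty.

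The only subtlety is making sure Proposition \ref{p52} genuinely applies (the hypotheses on $H^1,H^2$ and on $r\leqslant r_0$ are satisfied by assumption, noting $r\leqslant\delta\leqslant r_0$). Other than that, the proof is essentially a bookkeeping exercise once one recognizes that $\CPN_0$ consists only of the trivial subgroup and that all intervals associated with the trivial subgroup are empty; no new analytic or dynamical input is needed.
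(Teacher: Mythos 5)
Your proof is correct and takes essentially the same approach as the paper: case 1 uses the observation recorded in (\ref{same}) that the intervals $I_T(H,s,r)$ are connected components of a fixed set, and case 2 invokes Proposition \ref{p52} and the fact that all intervals attached to elements of $\CPN_0=\{\{e\}\}$ are empty. The paper's proof is just a more terse version of the same two-case argument.
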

\begin{proof}If $H^1=H^2$ and $t\in I(H^1,s_1,r)\cap I(H^2,s_2,r)\not=\emptyset$, then by (\ref{same}), $$I_T(H^1,s_1,r)=I_T(H^1,t,r)=I_T
(H^2,s_2,r).$$ If  $H^1\neq H^2$, by Proposition \ref{p52}, $I_T(H^1,s_1,r)\cap I_T(H^2,s_2,r)\subseteq I_T(\mathcal{F},r)=\emptyset$, where $\mathcal{F}=\PN_0\times[0,T]=\{e\} \times [0,T]$.\\
\end{proof}

\begin{proposition}\label{p53} Fix $x\in X$, $T>0$, and $0<r\leqslant r_0$.  Let $J=\{t\in [0,T]: G_{u_tx}\cap B_r\neq \{e\}\}$. 
 There exist $H^1,H^2,\dots, H^m\in\CPN_n$ and pairwise disjoint intervals $I^1,I^2,\dots, I^m$ such that for every $i\in \{1,\dots,m\}$, $H^j$ is $(r,u_t,x)$-dominant on  $I^j$ and $\overline{J}=\cup_{j=1}^M I^j$.
\end{proposition}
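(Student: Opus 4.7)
The plan is to partition $[0,T]$ into finitely many ``cells'' on each of which a single $\CPN$-subgroup is dominant, then assemble those cells meeting $\overline{J}$ into the required disjoint cover.

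By Lemma~\ref{l36}, which is stated for a general Lie group and hence applies to our product $G$, the set $J$ has only finitely many connected components, so $\overline{J}$ is a finite union of closed intervals. The proof of that lemma also shows that the set
\[
E := \left\{\gamma\in G_x\setminus\{e\} : \|u_t\gamma u_{-t}\|\leqslant r \text{ for some } t\in[0,T]\right\}
\]
is finite, using the discreteness of $G_x$ and the fact that each function $t\mapsto\|u_t\gamma u_{-t}\|^2$ is a polynomial in $t$. For every $\gamma\in E$ the sublevel set $S_\gamma := \{t\in[0,T]:u_t\gamma u_{-t}\in B_r\}$ is therefore a finite union of relatively open intervals. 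The finitely many endpoints of the $S_\gamma$ decompose $[0,T]$ into finitely many maximal subintervals $C_1,\dots,C_N$ on each of which the ``active set'' $A(t):=\{\gamma\in E:u_t\gamma u_{-t}\in B_r\}$ is constant, equal to some fixed $A_{C_k}\subseteq E$.

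On each cell $C$ with $A_C\neq\emptyset$, fix any $t_0\in C$. Then $u_{t_0}A_Cu_{-t_0}\subseteq G_{u_{t_0}x}\cap B_r\subseteq B_{r_0}$, so by Lemma~\ref{l51} applied to the discrete subgroup $G_{u_{t_0}x}$, there exists a $\CPN$-subgroup $\widetilde H$ containing $u_{t_0}A_Cu_{-t_0}$. Since the class $\CPN$ is closed under conjugation by $G$, the group $H_C:=u_{-t_0}\widetilde Hu_{t_0}$ is again a $\CPN$-subgroup, and $A_C\subseteq H_C$. Consequently, for every $t\in C$ we have $G_{u_tx}\cap B_r = u_tA_Cu_{-t}\subseteq u_tH_Cu_{-t}$, i.e.\ $H_C$ is $(r,u_t,x)$-dominant on $C$. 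If the conclusion genuinely demands $H^j\in\CPN_n$ and some $\pi_j(H_C)=\{e\}$, we simply enlarge $H_C$ by replacing $\{e\}$ with an arbitrary $M_jA_j$ in each trivial factor; dominance is preserved.

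Finally, to produce the disjoint cover of $\overline{J}$, we close each cell $C_k$ with $A_{C_k}\neq\emptyset$ by assigning each boundary point $t_*$ shared by two adjacent such cells $C,C'$ to exactly one side. This is harmless because the $S_\gamma$ are open, so $A(t_*)\subseteq A_C\cap A_{C'}$, and hence both $H_C$ and $H_{C'}$ dominate at $t_*$. Merging consecutive closed cells that happen to admit a common dominant $\CPN$-subgroup gives the pairwise disjoint intervals $I^1,\dots,I^m$ together with subgroups $H^1,\dots,H^m$, and their union is $\overline{J}$. The main obstacle is verifying that dominance really does glue across these boundary points---this is precisely where the openness of the $S_\gamma$ and the inclusion $A(t_*)\subseteq A_C\cap A_{C'}$ are used; without this, dominance might in principle fail at the transition and the intervals could not be closed to cover all of $\overline{J}$.
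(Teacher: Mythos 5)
Your proof is correct, and it takes a genuinely different route from the one in the paper. The paper's own argument is by compactness: for each fixed $t\in\overline{J}$, Lemma~\ref{l51} together with the Zassenhaus property of $B_{r_0}$ supplies a $\CPN$-subgroup $H(t)$ dominant at $t$; it is then asserted ``by continuity'' that $H(t)$ is dominant on a whole open interval $I(t)$ around $t$, and one covers the compact set $\overline{J}$ by finitely many such $I(t_j)$ and refines them to a disjoint cover. You replace the local-continuity step by an explicit cell decomposition: you observe that the set $E$ of nontrivial $\gamma\in G_x$ whose conjugates $u_t\gamma u_{-t}$ ever meet $B_r$ for $t\in[0,T]$ is finite, that the finitely many endpoints of the open sublevel sets $S_\gamma$ cut $[0,T]$ into cells on which the active set $A(t)$ is constant, and then apply Lemma~\ref{l51} once per cell; the openness of the $S_\gamma$ gives the containment $A(t_*)\subseteq A_C$ at a boundary point, which is exactly what makes the closed cells glue into a cover of $\overline{J}$. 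Your route is a bit longer but makes completely explicit where and why the dominant subgroup can change, avoiding the somewhat informal continuity step; both proofs use Lemma~\ref{l51} and discreteness of $G_x$ in the same essential way, and your remark about enlarging trivial factors to land in $\CPN_n$ fixes a minor gap that the paper's proof glosses over. Two small points: (i) the finiteness of $E$ is not literally proved in Lemma~\ref{l36}; it needs the separate (but easy) observation that $\bigcup_{t\in[0,T]}u_{-t}\overline{B_r}u_t$ is bounded and hence meets the discrete group $G_x$ in finitely many points; and (ii) the final merging of adjacent cells is unnecessary, since the statement does not require the $I^j$ to be maximal --- closing the nonempty cells and assigning each shared endpoint to one side already yields the required disjoint cover of $\overline{J}$.
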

\begin{proof} Fix $t\in \overline{J}$. By Lemma \ref{l51} and the fact that $B_r$ is a Zassenhaus neighborhood 
there exists a group $H(t)$ which is conjugate to $H_1\times H_2\times...\times H_n$ where each $H_i$ is either  
$M_iA_i$ or $M_iN_i$ and 
$$G_{u_tx}\cap B_{r}\subseteq u_tH(t)u_{-t}.$$
By continuity there exists an open interval $I(t)$ containing $t$ such that $H(t)$ is $(r,u_t,x)$-dominant on $I(t)$. By compactness of $\overline{J}$, a finite number of 
such intervals will cover $\overline{J}$. We may then shrink the intervals if needed to insure that $\overline{J}=\cup_{j=1}^m I^j$ and that the intervals are disjoint. Our intervals may or may not contain their end points.\\
\end{proof}

For the sake of clarity, we first present the proof of the first half of Theorem \ref{maintheorem2} in the case when $G$ is the product of 2 semisimple $\mathbb{R}$-rank 1 groups.  Most of the ideas of the proof  of Theorem \ref{maintheorem2} appear in this case and notation is simpler.\\
\subsection{Quantitative non-divergence in the case $G=G_1\times G_2$.}
\begin{proof} Let $\delta<r_0$, fix $x\in X_\delta$, fix $T>0$ and  fix a one-parameter unipotent subgroup  $\{u_t\}_{t\in\mathbb{R}}$ of $G$. We show the conclusion holds with $c_{\epsilon}=\overline{c}_{\epsilon/4}^2$, where $\overline{c}_{\epsilon}$ is as in Proposition \ref{p51}.
 Since $x\in X_\delta$,  it follows that $G_{u_{0}x}\cap B_r=\{e\}$. Thus by Proposition \ref{p51},  for any $0<r\leqslant\delta$, $H\in\CPN$, and  $s\in[0,T]$,
\begin{equation}\label{e52}
  m\Big(\big\{t\in I_T(H,s,r):G_{u_tx}\cap u_tHu_{-t}\cap B_{\overline{c}_{\epsilon/4} r}\neq \{e\}\big\}\Big)\leqslant\frac{\epsilon}{4}\cdot m( I_T(H,s,r)).
 \end{equation}   Equation (\ref{e52}) is the main tool used in this proof and follows from the work of Kleinbock and Margulis \cite{K}\cite{KM}.
\\

 Let $J=\big\{t\in[0,T]:G_{u_tx}\cap B_{\overline{c}_{\epsilon/4}^2\delta}\neq\{e\}\big\}$. By Proposition \ref{p53} There exists a finite set
 $\{H^j\}\subseteq \CPN_2$  and  a corresponding collection of intervals $\{I_j\}$ such that  $H^j$ is $(\overline{c}_{\epsilon/4}^2\delta,u_t,x)$-dominant on $I_j$ and $\overline{J}=\cup_{j} I_j$.  By construction of the intervals $I_j$,  for each $j$ there exists $t_j\in I_j$ such that $$ G_{u_{t_j}x}\cap u_{t_j}H^ju_{-t_j}\cap B_{\overline{c}_{\epsilon/4}\delta}\neq\{e\}.$$
 
  Suppose $H^i=H^j$, $i\neq j$,  and $I_T\left(H^i,t_i,\overline{c}_{\epsilon/4}\delta\right)\cap I_T\left(H^j,t_j,\overline{c}_{\epsilon/4}\delta\right)\neq\emptyset$, then by equation (\ref{same}),
  $$I_T\left(H^i,t_i,\overline{c}_{\epsilon/4}\delta\right)= I_T\left(H^j,t_j,\overline{c}_{\epsilon/4}\delta\right).$$
   In this case 
 \begin{equation}\label{jcont}m( I_i\cup I_j)\leqslant m(\{t\in I_T(H^i,t_i,\overline{c}_{\epsilon/4}\delta):  G_{u_tx}\cap u_{t}H^iu_{-t}\cap  B_{\overline{c}_{\epsilon/4}^2 \delta}\neq \{e\}\}).\end{equation}
 
 Let 
 $$S=\left\{i:\text{ for all $j<i$ either $H_i\neq H_j$ or } I_T\left(H^i,t_i,\overline{c}_{\epsilon/4}\delta\right)\neq I_T\left(H^j,t_j,\overline{c}_{\epsilon/4}\delta\right)\right\}.$$
 $S$ discards duplicate maximal intervals of $\overline{c}_{\epsilon/4}\delta$-intersection arising from the same \PN-subgroup, so  by equation (\ref{jcont})
  \begin{eqnarray}
 m\left(J\right)&=&
  m\left(\bigcup_{i} \left\{t\in I_i: G_{u_tx}\cap B_{\overline{c}_{\epsilon/4}^2\delta}\neq\{e\}\right\}\right) \\&  \leqslant&  m\left(\bigcup_{i\in S}\left\{t\in I_T\left(H^i,t_i,\overline{c}_{\epsilon/4}\delta\right): G_{u_tx}\cap u_tH^iu_{-t}\cap  B_{\overline{c}_{\epsilon/4}^2\delta}\neq\{e\}\right\}\right).\label{e53}
  \end{eqnarray}

As $S$ is finite  there exists   $\tilde{S}\subset S$ such that
 $$\bigcup_{j\in {S}} I_T\left(H^j,t_j,\overline{c}_{\epsilon/4}\delta\right)=\bigcup_{j\in \tilde{S}} I_T\left(H^j,t_j,\overline{c}_{\epsilon/4}\delta\right),$$
and for each $t\in [0,T]$, $t$ belongs to at most $2$ intervals $I_T\left(H^j,t_j,\overline{c}_{\epsilon/4}\delta\right)$ with $j\in \tilde{S}$ (some $t\in [0,T]$ may not be covered). Thus 
\begin{equation}\label{e54}\sum_{j\in\tilde{S}} m\left( I_T\left(H^j,t_j,\overline{c}_{\epsilon/4}\delta\right)\right)\leqslant2T.\end{equation}

Let $j\in S\setminus\tilde{S}$.
For each $i\in\tilde{S}$, either $H^i$ and $H^j$ are distinct or $I_T\left(H^i,t_i,\overline{c}_{\epsilon/4}\delta\right)$ and $I_T\left(H^j,t_j,\overline{c}_{\epsilon/4}\delta\right)$ are disjoint.  So if 
$\tilde{S}_j:=\{i\in\tilde{S}:H^i\neq H^j\}$,
then

\begin{equation*}
I_T\left(H^j,t_j,\overline{c}_{\epsilon/4}\delta\right)\subseteq\bigcup_{i\in \tilde{S}_j} I_T\left(H^i,t_i,\overline{c}_{\epsilon/4}\delta\right).\tag{C}
\end{equation*}

Therefore, by Corollary \ref{c52}, there exists a finite set $\{F^i\}\in\CPN_1$  and $\{s_i\}\in[0,T]$ such that 
\begin{equation*}
\bigcup_{j\in S\setminus \tilde{S}}I_T(H^j,t_j,\overline{c}_{\epsilon/4}\delta)\subseteq \bigcup_{i} I_T(F^i,s_i,\overline{c}_{\epsilon/4}\delta).\tag{D}
\end{equation*}
Define
$$S'=\left\{i: I_T(F^i,s_i,\delta)\neq I_T(F^j,s_j,\delta)\text{ for all }j<i\,\text{with }\, \hull(F_i)=\hull(F_j)\right\}$$
Analogously to equation (\ref{e53}),

\begin{eqnarray}
m\left( \bigcup_{i\in S\setminus\tilde{S}} I_T(H^i,t_i,\overline{c}_{\epsilon/4}\delta)\right)\!\!\!\!\!&\leqslant&\!\!\!\!\!m\left(\bigcup_{i} I_T\left(F^i,t_i,\overline{c}_{\epsilon/4}\delta\right)\right)\\&\leqslant&\!\!\!\!\!\sum_{i\in {S}'}m\left(\left\{t\in I_T(F^i,s_i,\delta): G_{u_tx}\cap u_{t}F^iu_{-t}\cap B_{\overline{c}_{\epsilon/4}\delta}\neq\{e\}\right\}\right).\label{e55}
 \end{eqnarray}

By Corollary \ref{c53}, for $i,j\in S' $ such that $i\neq j$ and $\hull (F^i)=\hull(F^j)$, we have that $I_T(F^i,s_i,\delta)$ and $ I_T(F^j,s_j,\delta)$ are disjoint. Hence,
 \begin{equation}\label{e56}\sum_{i\in{S}'}m\left(I_T(F^i,s_i,\delta)\right)\leqslant2T.
 \end{equation}
 For $r=\delta$ and $r=\overline{c}_{\epsilon/4}\delta$, by equation (\ref{e52}),
\begin{equation}\label{e57}m\left(\left\{t\in I_T(H^i,t_i,\overline{c}_{\epsilon/4}\delta): G_{u_tx}\cap u_tH^iu_{-t}\cap B_{\overline{c}_{\epsilon/4}^2\delta}\neq\{e\}\right\}\right)\leqslant\frac{\epsilon}{4} m\big(I_T(H^i,t_i,\overline{c}_{\epsilon/4}\delta)\big),\end{equation}
and
\begin{equation}\label{e58}m\left(\left\{t\in I_T(F^i,s_i,\delta): G_{u_tx}\cap u_tF^iu_{-t}\cap B_{\overline{c}_{\epsilon/4}\delta}\neq\{e\}\right\}\right)\leqslant\frac{\epsilon}4 m\big(I_T(F^i,s_i,\delta)\big).\end{equation}

Thus
\begin{align*}
 m\left(J\right)
  &\leqslant   m\left(\bigcup_{i\in S}\left\{t\in I_T(H^i,t_i,\overline{c}_{\epsilon/4}\delta ): G_{u_tx}\cap u_tH^iu_{-t}\cap B_{\overline{c}_{\epsilon/4}^2\delta}\neq\{e\}\right\}\right)&&\text{(Equation (\ref{e53}))}\\
  &\leqslant \sum_{i\in \tilde{S}} m\left( \left\{t\in I_T(H^i,t_i,\overline{c}_{\epsilon/4}\delta): G_{u_tx}\cap u_tH^iu_{-t}\cap B_{\overline{c}_{\epsilon/4}^2\delta}\neq\{e\}\right\}\right)\\&\,\,\,\,\,+ m\left(\bigcup_{i\in S\setminus\tilde{S}}\left\{t\in I_T(H^i,t_i,\overline{c}_{\epsilon/4}\delta): G_{u_tx}\cap u_tH^iu_{-t} \cap B_{\overline{c}_{\epsilon/4}^2\delta}\neq\{e\}\right\}\right)\\
  &\leqslant  \sum_{i\in \tilde{S}}\frac\epsilon4 m\left( I_T(H^i,t_i,\overline{c}_{\epsilon/4}\delta)\right)+ m\left( \bigcup_{i\in S\setminus\tilde{S}}I_T(H^i,t_i,\overline{c}_{\epsilon/4}\delta)\right)&&\text{(Equation  (\ref{e57}))}\\
    &\leqslant \frac{\epsilon}{2}T+\sum_{i\in S'}m\left(\left\{t\in I_T(F^i,s_i,\delta): G_{u_tx}\cap u_tF^iu_{-t}\cap B_{\overline{c}_{\epsilon/4}\delta}\neq\{e\}\right\}\right)&&\text{(Equations  (\ref{e54}) and (\ref{e55}))}
\\
 &\leqslant \frac{\epsilon}{2}T+\sum_{i\in S'}\frac\epsilon4 m\left( I_T(F^i,s_i,\delta)\right) &&\text{(Equation  (\ref{e58}))}
\\
 &\leqslant \frac{\epsilon}{2}T+\frac{\epsilon}{2}T=\epsilon T.&&\text{(Equation  (\ref{e56}))}
\\
 \end{align*}

\end{proof}

\subsection{Quantitative non-divergence in the case $G=G_1\times\dots\times G_n$}
\begin{proof}Let $0<\delta<r_0$. Fix $x\in X_\delta$, fix $T>0$ and  fix a one-parameter unipotent subgroup  $\{u_t\}_{t\in\mathbb{R}}$ of $G$. We need the following proposition.

\begin{proposition}\label{p54}
Let $\overline{c}_\epsilon$ be as in Proposition \ref{p51}, and for $p=0,\dots,n$, let $\delta_p=\overline{c}_\epsilon^p\delta$.
  Let $$J=\big\{t\in
[0,T]:G_{u_tx}\cap B_{\overline{c}_\epsilon^n\delta}\neq\{e\}\big\}.$$ For every $1\leqslant p\leqslant n$ there exist finite sets    $\mathcal{T}^p,\mathcal{F}^p,\mathcal{\tilde{F}}
^p\subset\CPN_p\times[0,T]$ such that
\begin{enumerate}
\item[1)] ${J}\subseteq I_T(\mathcal{T}^n,\delta_n)$
\item[2)] $\mathcal{F}^p,\tilde{\mathcal{F}}^p\subseteq \mathcal{T}^p$, and $\tilde{\mathcal{F}}^1=\emptyset$.
\item[3)]$I_T(\mathcal{F}^p,\delta_{p})\cup I_T(\mathcal{\tilde{F}}^{p},\delta_{p})=I_T(\mathcal{T}^p,\delta_{p})$
\item[4)] For every $t\in [0,T]$, $t$ belongs to  at most $2{n\choose p}$ elements of $\left\{I_T(H,t,\delta_{p-1})\right\}_{(H,t)\in \mathcal{F}^p}$
\item[5)]  If additionally $p\neq 1,$ $m\left(I_T(\mathcal{\tilde{F}}^{p},\delta_{p-1})\right)\leqslant m\left( I_T(\mathcal{T}^{p-1},\delta_{p-1})\right).$
 \end{enumerate}
\end{proposition}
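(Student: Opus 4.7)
The plan is to construct the three families by downward induction on $p$, starting at $p=n$ and descending to $p=1$. At the top, apply Proposition \ref{p53} with $r=\delta_n$ to cover $\overline J$ by finitely many disjoint intervals $I^1,\dots,I^m$ carrying $(\delta_n,u_t,x)$-dominant $\CPN_n$-subgroups $H^1,\dots,H^m$. Since $J=\{t:G_{u_tx}\cap B_{\delta_n}\neq\{e\}\}$ is open and, by a slight variant of Lemma \ref{l36}, has only finitely many connected components, each such component $C$ sits in a unique $I^{j(C)}$ by the disjointness of the cover. Selecting a $t_C\in C$ for each $C$ and placing $(H^{j(C)},t_C)$ into $\mathcal T^n$, dominance on $I^{j(C)}$ identifies the condition defining $I_T(H^{j(C)},t_C,\delta_n)$ with the condition defining $J$ on $I^{j(C)}$, so $C\subseteq I_T(H^{j(C)},t_C,\delta_n)$ and $J\subseteq I_T(\mathcal T^n,\delta_n)$, which is condition (1).

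Given $\mathcal T^p$ for some $p\geq 2$, I would first thin it by removing redundancies: whenever $(H,t_1),(H,t_2)\in\mathcal T^p$ satisfy $t_2\in I_T(H,t_1,\delta_{p-1})$, discard $(H,t_2)$ via (\ref{same}). Then partition the survivors according to $\hull(H)$, of which there are $\binom{n}{p}$ possibilities (one for each $p$-subset of $\{1,\dots,n\}$). Within each hull $\mathcal H$ apply a standard one-dimensional Vitali-type extraction to the finite family $\{I_T(H,t,\delta_{p-1}):(H,t)\in\mathcal T^p,\,\hull(H)=\mathcal H\}$, producing a sub-family $\mathcal F^p_{\mathcal H}$ with the same union and point-multiplicity at most $2$. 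Setting $\mathcal F^p=\bigsqcup_{\mathcal H}\mathcal F^p_{\mathcal H}$ and $\tilde{\mathcal F}^p=\mathcal T^p\setminus\mathcal F^p$, summing the multiplicity bound over hulls gives condition (4), while (2) and (3) are immediate. For each $(H_0,t_0)\in\tilde{\mathcal F}^p$, Vitali selection embeds $I_T(H_0,t_0,\delta_{p-1})$ into $\bigcup_{(H',t')\in\mathcal F^p_{\hull(H_0)}}I_T(H',t',\delta_{p-1})$. Thanks to the duplicate removal, any $(H_0,t')\in\mathcal F^p_{\hull(H_0)}$ carries an interval disjoint from $I_T(H_0,t_0,\delta_{p-1})$, so only subgroups $H'\neq H_0$ participate in the cover, and Corollary \ref{c52} produces a finite $\CPN_{p-1}$-family $\mathcal G_{(H_0,t_0)}$ whose $\delta_{p-1}$-intervals cover $I_T(H_0,t_0,\delta_{p-1})$. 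Taking $\mathcal T^{p-1}=\bigcup_{(H_0,t_0)\in\tilde{\mathcal F}^p}\mathcal G_{(H_0,t_0)}$ secures condition (5).

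At $p=1$, set $\mathcal F^1=\mathcal T^1$ and $\tilde{\mathcal F}^1=\emptyset$; Corollary \ref{c53} forces any two distinct $\CPN_1$-subgroups sharing a hull to have intervals at radius $\delta_0=\delta$ that are either equal or disjoint, so after duplicate removal each $t\in[0,T]$ lies in at most one interval per hull, giving multiplicity at most $\binom{n}{1}=n\leq 2\binom{n}{1}$ and condition (4). The principal obstacle, I expect, is the tight interaction between the duplicate removal step and the hypothesis of Corollary \ref{c52}, which demands the distinguished subgroup $H_0$ to be distinct from every subgroup in the covering family; duplicate removal is precisely what guarantees this while keeping the Vitali covering intact. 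A secondary subtlety is matching connected components of $J$ to the disjoint dominance intervals $I^j$ from Proposition \ref{p53} at the base case, so that a single choice $t_C$ per component correctly yields the containment asserted in condition (1).
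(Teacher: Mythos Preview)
Your plan is essentially the paper's own argument: initialize $\mathcal T^n$ via Proposition~\ref{p53}, then descend by partitioning each $\mathcal T^p$ according to $\hull$, performing a Vitali-type extraction within each hull class to obtain $\mathcal F^p$, and invoking Corollary~\ref{c52} on the leftover pairs to produce $\mathcal T^{p-1}$; the base case $p=1$ is handled via Corollary~\ref{c53}. You have correctly located the main obstacle, namely ensuring the distinctness hypothesis of Corollary~\ref{c52}.

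There is, however, a bookkeeping tension in the way you remove duplicates. You thin $\mathcal T^p$ at radius $\delta_{p-1}$ \emph{before} selecting $\mathcal F^p$, and then set $\tilde{\mathcal F}^p=\mathcal T^p\setminus\mathcal F^p$. Thinning at $\delta_{p-1}$ preserves the union $I_T(\,\cdot\,,\delta_{p-1})$, which is what you need for the Vitali step and for Corollary~\ref{c52}, but it need not preserve $I_T(\,\cdot\,,\delta_p)$: two pairs $(H,t_1),(H,t_2)$ with $t_2\in I_T(H,t_1,\delta_{p-1})$ can have $I_T(H,t_1,\delta_p)\neq I_T(H,t_2,\delta_p)$, so discarding $(H,t_2)$ may shrink the $\delta_p$-union. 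Consequently you cannot simultaneously have property~3) at radius $\delta_p$ with the original $\mathcal T^p$ and property~5) at radius $\delta_{p-1}$ with the thinned version; one of the two scales loses information. The paper avoids this by leaving $\mathcal T^p$ untouched and instead defining $\tilde{\mathcal F}^p$ as a \emph{proper} subset of $\mathcal T^p\setminus\mathcal F^p$: it discards from $\mathcal T^p\setminus\mathcal F^p$ only those $(H,t)$ admitting some $(H,s)\in\mathcal F^p$ with $I_T(H,s,\delta_p)=I_T(H,t,\delta_p)$. This choice secures property~3) directly at $\delta_p$, while the argument (C)--(D) from the $G_1\times G_2$ case supplies the distinctness needed for Corollary~\ref{c52} at $\delta_{p-1}$. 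Adjust your duplicate-removal step to this two-scale definition and your argument goes through.

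A minor remark on your base case: the paper simply picks one $t_j$ in each dominance interval $I_j$ furnished by Proposition~\ref{p53} and sets $\mathcal T^n=\{(H_j,t_j)\}$; since each $I_j\subseteq I_T(H_j,t_j,\delta_n)$ by dominance, property~1) is immediate. Your alternative via connected components of $J$ also works but is not needed.
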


\begin{proof}[Proof of Proposition \ref{p54}] 
Let us construct $\mathcal{T}^n$.
Let $H_1,\dots, H_l\in\CPN_n$  and intervals $I_1,\dots,I_l$ contained in $[0,T]$ satisfy Proposition \ref{p53} with $r=\delta_n$.  For each $j\in\{1,\dots,l\}$, fix $t_j\in I_j$.
By the construction,
\begin{equation}\label{jcont2}{J}\subseteq\bigcup_{i=1}^l I_T\left(H_i,t_i,\delta_n\right).\end{equation}
Then $1)$ holds for $\mathcal{T}^n=\{(H_j,t_j)\}_{j=1}^l$.

In the following, we shall use backward induction to construct $\mathcal{F}^p,\tilde{\mathcal{F}}^p$ and $\mathcal{T}^{p}$ for $p=n,n-1,\dots,2$. Let $2<p\leqslant n$ and assume that (1) $\mathcal{T}^m$ has been constructed for $m=p,\dots, n$, and (2) $\mathcal{F}^m$ and $\mathcal{\tilde{F}}^m$ have been constructed for 
 $m=p+1,\dots,n$ if $p<n$.
Partition the collection $\mathcal{T}^p$ into ${n\choose p}$ sub-collections such that  if $\hull(H_i)=\hull(H_j)$, then $(H_i,t_i)$ and $(H_j,t_j)$ belong to the same sub-collection. 
Arbitrarily index these sub-collections as
$\mathcal{T}^{p}_{1},\mathcal{T}^{p}_{2},\dots,\mathcal{T}^{p}_{n\choose p},$
and pick any one of these, say, $\mathcal{T}^{p}_{j}$. 
Then because of the finiteness of $\mathcal{T}^{p}_{j}$, there exists a further sub-collection 
$\mathcal{F}^p_j\subseteq\mathcal{T}^p_j$ such that for every $t\in [0,T]$, $t$  belongs to at most two elements of $\left\{I_T\left(F,s,\delta_{p-1}\right)\right\}_{\left(F,s\right)\in \mathcal{F}^{p}_{j}}$, 
and 

\begin{equation}\label{noloss}
\bigcup\limits_{\left(F,s\right)\in \mathcal{F}^{p}_{j}}I_T\left(F,s,\delta_{p-1}\right)=\bigcup\limits_{\left(F,s\right)\in \mathcal{T}^{p}_{j}}I_T\left(F,s,\delta_{p-1}\right).
\end{equation}

Define $$\mathcal{\tilde{F}}_j^{p}:=\left\{(H,t)\in\mathcal{{T}}_j^p\setminus\mathcal{F}_j^p:\text{ $\forall$ $(F,s)\in \mathcal{{F}}_j^p$, if $F=H$, then $I_T(F,s,\delta_{p})\neq I_T(H,t,\delta_{p})$}\right\}.$$

Then for $\tilde{\mathcal{F}_j^p}$, repeating the argument as in the proof of $G_1\times G_2$ case (see equations (C) and (D)), by Corollary \ref{c52} there exists a finite set $\mathcal{S}^{p-1}_j\in\CPN_{p-1}\times[0,T]$ such
 that 
\begin{equation}\label{scont} I_T\left(\mathcal{\tilde{F}}^{p}_j,\delta_{p-1}\right)\subseteq I_T\left(\mathcal{{S}}^{p-1}_j,\delta_{p-1}\right).\end{equation}

Now let $\mathcal{T}^{p-1}=\bigcup_{j=1}^{n\choose p}\mathcal{S}^{p-1}_j$, $\mathcal{F}^p=\bigcup_{j=1}^{n\choose p}\mathcal{F}^p_j$, and $\mathcal{\tilde{F}}^p=\bigcup_{j=1}^{n\choose p}\mathcal{\tilde{F}}^p_j$. By the definition of $\mathcal{{F}}^{p}$ and $\mathcal{\tilde{F}}^{p}$, $2)$  holds. By definition of $\tilde{\mathcal{F}}^p$, $3)$ holds. 
By the definition of $\mathcal{F}^p_j$, $4)$ holds. Finally, by
definition of $\mathcal{T}^{p-1}$ and (\ref{scont}), we have that 5) holds. Thus by induction we have constructed $\mathcal{T}^p$ for $p=1,\dots,n$ and $\mathcal{F}^p$ and $\tilde{\mathcal{F}}^p$ for $p=2,\dots,n$. Let $\mathcal{F}^1=\mathcal{T}^1$ and $\tilde{\mathcal{F}}^1=\emptyset$. By definition,  2), 3), 5) follow easily and $4)$ holds due to Corollary \ref{c53}.

\end{proof}

Now we are in the position to prove the quantitative non-divergence in the case when $G=G_1\times\dots\times G_n$.
Recall that $x\in X_{\delta_0}$. By Proposition \ref{p51}, for $j=1,\dots,n$, $H\in\CPN$, and $t\in[0,T]$,
\begin{eqnarray}
m\left(I_T\left(H,t,\delta_j\right)\right)&\leqslant&m\left(\left\{t\in I_T\left(H,t,\delta_{j-1}\right):G_{u_tx}\cap B_{\delta_j}\neq\{e\}\right\}\right)\\
&\leqslant& \epsilon m\left(I_T\left(H,t,\delta_{j-1}\right)\right)\label{klein}
\end{eqnarray}

By Proposition \ref{p54} \#3 and \#5, for $p=2,\dots,n$
 \begin{align}
 m\left(I_T(\mathcal{T}^p,\delta_p)\right)
&\leqslant m\left(I_T(\mathcal{F}^p,\delta_p)\right)+m\left(I_T(\mathcal{\tilde{F}}^{p},\delta_p)\right)\\
&\leqslant\sum_{(H,t)\in\mathcal{F}^p} m\left(I_T\left(H,t,\delta_p\right)\right)+m\left(I_T(\mathcal{\tilde{F}}^p,\delta_{p-1})\right)&&\\
&\leqslant\sum_{(H,t)\in\mathcal{F}^p} m\left(I_T\left(H,t,\delta_p\right)\right)+m\left(I_T(\mathcal{T}^{p-1},\delta_{p-1})\right)\label{induct}
 \end{align}
 
 In the following final step, we shall use (\ref{induct}) inductively for $p=n,n-1,\dots,2$. We have
 
 \begin{align*}
m(J)&\leqslant m\left(I_T(\mathcal{T}^n,\delta_n)\right)&&\text{(Prop. \ref{p54} \#1.)}\\
&\leqslant\sum_{(H,t)\in\mathcal{F}^n} m\left(I_T\left(H,t,\delta_n\right)\right)+m\left(I_T(\mathcal{T}^{n-1},\delta_{n-1})\right)&&\text{(Equation (\ref{induct}))}\\
&\leqslant\sum_{p=1}^n\sum_{(H,t)\in\mathcal{F}^p} m\left(I_T\left(H,t,\delta_p\right)\right)&&\text{(Induction (\ref{induct}))}\\
&\leqslant\epsilon\sum_{p=1}^n\sum_{(H,t)\in\mathcal{F}^p} m\left(I_T\left(H,t,\delta_{p-1}\right)\right)&&\text{(Equation (\ref{klein}))}\\
&\leqslant\epsilon\sum_{p=1}^n2T{n\choose p}=2(2^n-1)\epsilon T.&&\text{(Prop. \ref{p54} \# 4.)}
\end{align*}
Thus the analogue of Theorem \ref{uniform} for the product case holds
for
$c_\epsilon=\left(\overline{c}_{\epsilon/\left(2(2^n-1)\right)}\right)^n$. This
proves the first part of Theorem \ref{maintheorem2}.
\end{proof}

\subsection{Uniform non-divergence in the case $G=G_1\times\dots\times G_n$}



\begin{proof}
Let $\epsilon>0$ be given. Fix $r_0>0$ such that $B_{r_0}$ is a Zassenhaus neighborhood and let $\eta\in(0,1]$. By the proof of Theorem \ref{maintheorem2} there exists $c_{\epsilon/2}>0$ such that if $x=g\Gamma \in X_{\eta r_0}$, then for any unipotent one parameter subgroup $\{u_t\}_{t\in\mathbb{R}}$ of $G$,
\begin{equation*}
\frac{1}{T}m\left(\left\{t\in[0,T]: u_tx\not\in X_{c_{\epsilon/2}\eta r_0}\right\}\right)<\frac12\epsilon, \,\,\text{for all $T\geqslant0$}.
\end{equation*}
Furthermore, if $x$ and $t_0$ are such that $u_{t_0}x\in X_{\eta r_0}$, then for $T> t_0(2\epsilon^{-1}-1)$
\begin{equation*}
\frac{1}{T}m\left(\left\{t\in[0,T]: u_tx\not\in X_{c_{\epsilon/2}\eta r_0}\right\}\right)<\epsilon.
\end{equation*}
We will prove the theorem for  $\delta_\epsilon=c_{\epsilon/2} r_0$. Let $\delta=\eta\delta_\epsilon$ and let $x\in X$. We may assume that  $u_tx\not\in X_{\eta r_0}$ for all $t>0$, otherwise, by above, we are finished. 

 Suppose there exists a \PN-subgroup $H$ such that $I(H,t,\eta r_{0})=[0,\infty)$. By  arguing similarly to the proof the 
 proof of Theorem \ref{uniform}, there exists a constant $\beta_\epsilon=\max\left\{\frac{16C_k^{2k^2}}
 {l_Mc_{\epsilon/2}},\left(\frac{8C_k^{k^2}}{lc_{\epsilon/2}}\right)^2\right\}$, where $k$ is the dimension of 
 $G$, $l_M$ is the constant found in Lemma \ref{l52}, and $C_k=2^kk^3(k^2+1)^\frac{1}{k^2}$,  such that 
 there exists a proper abelian subgroup $L\subseteq H\subseteq G$, $\Delta:=g\Gamma g^{-1}\cap L$ is a 
 lattice in $L$, $u_t$ normalizes $L$, and the covolume of $u_t\Delta u_{-t}$ in $u_tLu_{-t}$ is a fixed constant 
 bounded by $(\beta_\epsilon\delta)^{\dim(L)}$  for all $t\geqslant0$. In fact, the same proof works verbatim, 
 with the exception of the construction of  $\Lambda$. In the current case, we argue as in equations (\ref
 {deltadiv})  and (\ref{falsediv}) to find $\Lambda$ such that  $W:=\rspan(\Lambda)$ is invariant under $
 \Ad_{u_t}$, $\Lambda$ is a lattice in $W$, and $\|\Ad_{u_t}\Lambda\|_0$ is a constant less than $(2C_k^
 {k^2}\eta r_0)^{\rank(\Lambda)}$ for all $t>0$. Then the theorem holds in this case.

Thus we may assume that for any \PN-subgroup $H$ and  any $t>0$ we have $I(H,t,\eta r_0)\neq[0,\infty)$. If we can prove that there exists  $T_0>0$ such that  for any \PN-subgroup $H$ and $t\in[0,T_0]$ 
\begin{equation}\label{T}I\left(H,t,\eta r_0\right)\cap[0,T_0]\neq [0,T_0],\end{equation}
then by Proposition \ref{p51}, the equation (\ref{klein}) holds for $T>T_0$. As Proposition \ref{p54} holds independent of the choice of $x$,  the arguments of section 5.4  hold, and we have
\begin{equation*}
\frac{1}{T}m\left(\left\{t\in[0,T]: u_tx\not\in X_{c_{\epsilon/2}\eta r_0}\right\}\right)<\frac12\epsilon, \,\,\text{for all $T\geqslant T_0$}.
\end{equation*}Thus for this $x$ the trajectory is non-divergent, and we are done.\\

The rest is to prove equation (\ref{T}). First of all, we need several definitions.
 For a  \PN-subgroup $F$, let 
$$t_F:=\min\left\{t\geqslant 0:G_{{u_t}x}\cap u_t Fu_{-t}\cap B_{\eta r_0}=\{e\}\right\}.$$
Since $I(F,t,\eta r_0)\neq[0,\infty)$, it follows that $t_F<\infty$. Moreover,
for a time $t\geqslant 0$, we define  $H_t$  to be the minimal  \PN-subgroup (that is, the \PN-subgroup with the least coordinates) such that $G_{u_tx}\cap B_{\eta r_0}\subseteq H_t$. For two \PN-subgroups $F_1$ and $F_2$, let $\pi_{F_2^\perp}(F_1)$ denote the projection of $F_1$ onto the coordinates on which $F_2$ is trivial.
For a \PN-subgroup $F$, define $L_F$ to be the smallest \PN-subgroup containing both $F$ and $\pi_{F^\perp}(H_{t_F})$. \\

Now we inductively construct collections $\{\mathcal{H}_i\}$ of \PN-subgroups as follows: for the time $t=0$, let 
$$\mathcal{H}_0:=\left\{F\in \CPN:F\subseteq H_0\right\},$$ 
and for $i=1,\dots,n-1$, let
$$\mathcal{H}_i:=\left\{H\in\CPN: \exists\,\, F\in\mathcal{H}_{i-1}\,\,\text{and}\,\,H\subseteq L_F\right\}.$$
 Let $${T_0}=\max\{t_F:F\in\cup_{i=0}^{n-1}\mathcal{H}_i\}.$$
We will show equation (\ref{T}) for this $T_0$. Let $F$ be a \PN-subgroup. Define $F_0=F\cap H_0$ and for $i=1,\dots, n-1$, let $F_i=L_{F_{i-1}}\cap F$. If $F_0=\{e\}$, then $T_F=0$ and equation (\ref{T}) holds. If for some $i=1,\dots,n-1$, we have that $F_i=F_{i-1}$, then by construction $$G_{u_{t_{F_{i-1}}}}\cap B_{r_0}\cap u_{t_{F_{i-1}}}Fu_{-t_{F_{i-1}}}=\{e\}.$$ Since $F_{i-1}\in \mathcal{H}_{i-1}$, it follows that $t_{F_{i-1}}<T_0$ and $F$ satisfies equation ($\ref{T})$. If for every $i=1,\dots,n-1$, $F_i\neq F_{i-1}$, then $F=F_{n-1}$, $F\in \mathcal{H}_{n-1}$, and again $F$ satisfies equation ($\ref{T})$.
\end{proof}


\begin{thebibliography}{Dan86}

\bibitem[Dan79]{D79}
S.G. Dani.
\newblock On invariant measures, minimal sets and a lemma of Margulis.
\newblock {\em Inventiones Math}, 51(03):239--260, 1979.

\bibitem[Dan84]{D}
S.G. Dani.
\newblock On orbits of unipotent flows on homogeneous spaces.
\newblock {\em Ergodic Theory and Dynamical Systems}, 4(01):25--34, 1984.

\bibitem[Dan86]{D2}
S.G. Dani.
\newblock On orbits of unipotent flows on homogeneous spaces, ii.
\newblock {\em Ergodic Theory and Dynamical Systems}, 6(02):167--182, 1986.

\bibitem[DM93]{DM}
S.G. Dani and G.~A. Margulis.
\newblock Limit distributions of orbits of unipotent flows and values of
  quadratic forms.
\newblock {\em Advances in Soviet Mathematicsl}, 16:91--137, 1993.

\bibitem[DS84]{DS}
S.G. Dani and J~Smillie.
\newblock Uniform distribution of horocycle orbits for fuchsian groups.
\newblock {\em Duke Mathematical Journal}, 51:185--194, 1984.

\bibitem[K10]{K}
D.Y. Kleinbock.
{\em Quantitative nondivergence and its {D}iophantine applications}, 2010.

\bibitem[KM98]{KM}
D.Y. Kleinbock and G.~A. Margulis.
\newblock Flows on homogeneous spaces and {D}iophantine approximations on
  manifolds.
\newblock {\em The Annals of Mathematics}, 148:339--360, 1998.

\bibitem[M75]{M}
G. A. Margulis.
{\em On the action of unipotent groups in the space of lattices}, Halsted, New
  York, 1975.

\bibitem[Rag72]{R}
M.S. Raghunathan.
\newblock {\em Discrete Subgroups of {L}ie Groups}.
\newblock Springer-Verlag, New York, New York, 1972.

\bibitem[Ra90]{Rat}
M.~Ratner.
\newblock On measure rigidity of unipotent subgroups of semisimple groups.
\newblock {\em Acta Math}, 165:229--309, 1990.

\bibitem[Sel60]{Sel}
A.~Selberg.
\newblock On discontinuous groups in higher dimensional symmetric spaces.
\newblock {\em Contributions to Function Theory}, pages 147--164, 1960.



\end{thebibliography}

\end{document}